\newcommand*\samethanks[1][\value{footnote}]{\footnotemark[#1]} 
\newcommand{\IR}{\mathbb{R}}  									
\newcommand{\IO}{\Omega}  										
\definecolor{darkblue}{rgb}{0,0,.45}
\definecolor{darkblue2}{rgb}{0,0,.65}
\definecolor{darkred}{rgb}{0.6,0,0}
\definecolor{darkred2}{rgb}{0.85,0,0}
\definecolor{darkgreen}{rgb}{0,0.5,0}
\definecolor{darkgreen2}{rgb}{0,0.75,0}
\definecolor{darkgray}{rgb}{0.35,0.35,0.35}
\definecolor{mygray}{rgb}{.8, .8, .8}
\def\N{\mathbb{N}}
\def\R{\mathbb{R}}
\def\T{\mathcal{T}}
\def\E{\mathcal{E}}
\newcounter{hypocounter}
\renewcommand\thehypocounter{(H\arabic{hypocounter})} 
\newcommand{\overbar}[1]{\mkern 1.5mu\overline{\mkern-1.5mu#1\mkern-1.5mu}\mkern 1.5mu}
\theoremstyle{definition}								
\newtheorem{theorem}{Theorem}
\newtheorem{lemma}[theorem]{Lemma}					
\newtheorem{corollary}[theorem]{Corollary}
\newtheorem{remark}[theorem]{Remark}
\numberwithin{equation}{section}
\numberwithin{figure}{section}
\numberwithin{theorem}{section}
\title{Numerical analysis of a finite volume scheme for charge transport in perovskite solar cells}
\author{Dilara Abdel\thanks{Weierstrass Institute (WIAS), Mohrenstr. 39, 10117 Berlin, Germany}
    \and Claire Chainais-Hillairet\thanks{Univ. Lille, CNRS, Inria, UMR 8524 - Laboratoire Paul Painlevé, F-59000 Lille, France}
    \and Patricio Farrell\samethanks[1]
    \and Maxime Herda\thanks{Inria, Univ. Lille, CNRS, UMR 8524 - Laboratoire Paul Painlevé, F-59000 Lille, France}
}
\date{\today}
\begin{document}

    \maketitle
    \begin{abstract}
        In this paper, we consider a drift-diffusion charge transport model for perovskite solar cells, where electrons and holes may diffuse linearly (Boltzmann approximation) or nonlinearly (e.g.\ due to Fermi-Dirac statistics). To incorporate volume exclusion effects, we rely on the Fermi-Dirac integral of order $-1$ when modeling moving anionic vacancies within the perovskite layer which is sandwiched between electron and hole transport layers. After non-dimensionalization, we first prove a continuous entropy-dissipation inequality for the model. Then, we formulate a corresponding two-point flux finite volume scheme on Voronoi meshes and show an analogous discrete entropy-dissipation inequality. This inequality helps us to show the existence of a discrete solution of the nonlinear discrete system with the help of a corollary of Brouwer's fixed point theorem and the minimization of a convex functional.
        Finally, we verify our theoretically proven properties numerically, simulate a realistic device setup and show exponential decay in time with respect to the $L^2$ error as well as a physically and analytically meaningful relative entropy.
    \end{abstract}

    \tableofcontents
    \section{Introduction}

    In recent years, Perovskite Solar Cells (PSCs) have become one of the fastest growing photovoltaic technologies \cite{kim2020high, Tessler2020}. Two advantages of PSCs stand out: On the one hand, certain architectures have significantly lower production costs than conventional solar cells. On the other hand, silicon-perovskite tandem cells have become more efficient than classical single-junction silicon solar cells. However, the commercialization of PSCs is still in its early stages and several challenges need to be overcome, the most prominent being the relatively fast degradation of these devices. Apart from solar cells, perovskite materials also show promise for use in LEDs, photodetectors and memristors.

    There is well-established mathematical literature concerning drift-diffusion mathematical models to describe charge transport in classical or organic semiconductors and similar physical systems (see for instance and  non-exhaustively \cite{gajewski85, gajewski86, markowich1985stationary, markowich2012semiconductor, mock1983analysis, van1950theory}). In the perovskite material, a major difference is that apart from electrons and holes, ion migration plays a fundamental role. Therefore, to correctly reflect the physical behavior, perovskite models must contain additional freely moving ion species. In PSCs, the various charge carrier species live in different parts of the domain, evolve on different time scales and obey different diffusion laws. Moreover, one must take into account  the photogeneration effect. Because of the differences and new features compared to classical semiconductors, there is a need for new mathematical and numerical  modeling and analysis for these devices. Several models have been proposed recently in the literature -- with the exception of \cite{Abdel2021}, nearly all of them are in one dimension \cite{Calado2016,Courtier2018}.

    In this paper, we consider a three-layer drift-diffusion perovskite charge transport model for three different charge carriers. Within the entire device electrons and holes may diffuse linearly (Boltzmann approximation) or nonlinearly. The nonlinear diffusion may be governed, for example, by Fermi-Dirac statistics \cite{gajewski1989semiconductor} but our model allows even more general statistical relationships between densities and potentials. In the middle perovskite region, we include anion vacancies. It has been demonstrated that the ion migration despite being considerably slow cannot be neglected \cite{Calado2016}. To correctly incorporate volume exclusion effects, we rely on Fermi-Dirac statistics of order $-1$. All three drift-diffusion equations are coupled self-consistently to a nonlinear Poisson equation.

    Given that perovskite models are relatively new, the arising Partial Differential Equations (PDE) model has not been  studied yet mathematically. In comparison, classical drift-diffusion models have been studied in detail \cite{gajewski85, gajewski86, markowich1985stationary, markowich2012semiconductor, mock1983analysis}. For drift-diffusion models, an essential a priori estimate is based on the evolution of the physical \textit{free energy} or a related functional. It allows to study the well-posedness of the equations as well as the asymptotic behavior of its solution \cite{biler2000long, gajewski85, mock1983analysis}. The techniques relying on a well-chosen physically relevant Lyapunov functional have been used for many systems of dissipative PDEs and are usually referred to as entropy methods (see \cite{jungel2016entropy} and references therein), which is why in the following we will use the term \emph{entropy} instead of \emph{free energy}.

    The design of numerical schemes for drift-diffusion models is also an mature but still very active field of research (see for instance \cite{brezzi2005discretization, Cances2020, jungel1995numerical, Kantner2020, Kantner2020b, moatti2022structure,mock1983analysis, SG69, Yu1988}). In order to ensure the quality of the numerical simulation and the stability of the numerical method, efforts have been made towards the design of structure preserving schemes \cite{BessemoulinChatard2017,ChainaisHillairet2019, glitzky2011uniform, liu2021positivity, moatti2022structure}. Their aim is to preserve physical features of the original model such as the decay of free-energy or non-negativity of solutions. Because of the stiffness in drift-diffusion models arising from small parameters such as the Debye length, fully implicit in time numerical methods are usually preferred. They yield robustness of the scheme as well as asymptotic preserving properties \cite{bessemoulin14,ChainaisHillairet2019}. Finally, the treatment of nonlinear diffusion to handle general statistics function has also been investigated \cite{bessemoulin2012finite, farrell2018comparison, jungel1995numerical}.

    The main goal of this paper is the analysis of an implicit in time two-point flux approximation (TPFA) finite volume scheme for the perovskite model. The model and the scheme originate from \cite{Abdel2021}. The scheme relies on the the excess chemical potential flux scheme which appears to be used for the first time in \cite{Yu1988} and was later numerically analyzed in \cite{Cances2020, Gaudeul2021} and compared in \cite{Abdel2021b, Kantner2020}.

    The main tool for our analysis is an entropy-dissipation inequality for the perovskite model. After non-dimensionalizing the perovskite model, we establish such an inequality in the continuous setting in Theorem~\ref{thm:cont-E-D}. Then, we adapt the arguments to show in Theorem~\ref{thm:discrete-E-D} that the discrete counterpart of this inequality also holds for a solution of the implicit finite volume scheme. This \emph{a priori} estimate on the scheme allows us to prove the existence of a discrete solution at each time step in Theorem~\ref{thm.exresult}. The proof relies on a corollary of Brouwer's fixed point theorem for the quasi Fermi potentials, coupled with the minimization of a convex functional for the electric potential.

    We illustrate and complement our theoretical results with numerical experiments. We investigate the convergence in space and witness second order accuracy, as expected. By introducing a relative free energy with respect to the steady solution, we illustrate the long time behavior of transient solutions and their convergence towards steady state solutions exponentially fast in time. Finally, we investigate the large time behavior of the perovskite model at a constant applied voltage which physically corresponds to investigating the influence of preconditioning a PSC before current-voltage measurements.

    The remainder of the paper is organized as follows: In Section \ref{sec:model}, we introduce the original and the non-dimensionalized perovskite model for general statistics functions and the underlying free energy. In Section \ref{sec:cont_entropy_dissipation}, we then prove a continuous entropy-dissipation inequality for the perovskite model. In Section \ref{sec:FVM}, we present the corresponding finite volume scheme, for which we prove a discrete entropy-dissipation inequality. This inequality will allow us to deduce the existence of a discrete solution in Section \ref{sec:existence}. Finally, in Section \ref{sec:numerics}, we corroborate and complement our theoretical observations numerically before we conclude and discuss future research in Section \ref{sec:conclusion}.

    \section{Charge transport model for perovskite solar cells}
    \label{sec:model}
    Let $\mathbf{\IO} \subseteq \IR^d, d \leq 3$, be an open, connected and bounded spatial domain, which is partitioned into three pairwise disjoint, open subdomains
    $
    \mathbf{\IO} = \left( \cup_{k } \overline{\mathbf{\IO}}_{k}\right)^{\circ},
    $
    $k \in \{ \text{intr}, \text{HTL}, \text{ETL}\}$.
    Here, $\mathbf{\IO}_{\text{intr}}$ refers to the intrinsic perovskite region and $\mathbf{\IO}_{\text{HTL}}, \mathbf{\IO}
    _{\text{ETL}}$ to the doped electron and hole transport layers, respectively. We denote the interface between the transport layers and the perovskite layer by $\mathbf{\Sigma}_{\text{HTL}} = \partial \mathbf{\Omega}_{\text{HTL}} \cap \partial \mathbf{\Omega}_{\text{intr}}$ and $\mathbf{\Sigma}_{\text{ETL}} = \partial \mathbf{\Omega}_{\text{ETL}} \cap \partial \mathbf{\Omega}_{\text{intr}}$. The boundaries of the hole and electron transport layer do not intersect,  $\partial \mathbf{\Omega}_{\text{HTL}} \cap \partial \mathbf{\Omega}_{\text{ETL}} = \emptyset$, see \Cref{fig:2D-architecture} for a potential device geometry. The unknowns of the charge transport model are given by the electric potential $\psi(\mathbf{x}, t)$ and the quasi Fermi potentials (frequently called electrochemical potentials) of moving charge carriers, denoted by $\varphi_\alpha(\mathbf{x}, t)$, $\alpha \in \{ n, p, a\}$.
    Here, the quantities $n$, $p$ and $a$ refer to the electrons, holes and anion vacancies.
    Unlike quasi Fermi potentials for electrons and holes $\varphi_n, \varphi_p$, which are defined for any $\mathbf{x}\in\mathbf{\IO}$, the quasi Fermi potential of anion vacancies $\varphi_a$ is defined only in the intrinsic domain $\mathbf{\IO}_{\text{intr}}$. The density of a charge carrier is denoted by $n_\alpha$, $\alpha \in \{ n, p, a\}$. We examine the model for the charge transport in PSCs formulated in \cite{Abdel2021}, where for $t\geq 0$ the mass balances are given by
    \begin{subequations} \label{eq:model}
        \begin{align}
            z_n q \partial_t n_n + \nabla\cdot \mathbf{j}_n &=   z_n q \Bigl(G(\mathbf{x})-R(n_n,n_p)\Bigr),&&{\bf x}\in\mathbf{\Omega},\ t\geq 0, \label{eq:model-cont-eq-n}\\
            z_p q \partial_t n_p + \nabla\cdot \mathbf{j}_p &=  z_p q \Bigl(G(\mathbf{x})-R(n_n,n_p) \Bigr),&&{\bf x}\in\mathbf{\Omega},\ t\geq 0, \label{eq:model-cont-eq-p}\\
            z_a q \partial_t n_a + \nabla\cdot \mathbf{j}_a &= 0,&&{\bf x}\in\mathbf{\Omega}_\text{intr},\ t\geq 0, \label{eq:model-cont-eq-a}
        \end{align}
    \end{subequations}
    self-consistently coupled with the nonlinear and region-wise defined Poisson equation
    \begin{equation}\label{eq:model-poisson}
        - \nabla \cdot (\varepsilon_s \nabla \psi ) =\left\{{\begin{aligned}
                & q \Bigl( z_nn_n + z_pn_p + C({\bf x})\Bigr),   &&{\bf x}\in{\mathbf{\Omega}}_{\text{HTL}}\cup{\mathbf{\Omega}}_{\text{ETL}},\ t\geq 0,\\
                &q \Bigl( z_nn_n + z_pn_p + z_an_a + C({\bf x})\Bigr),  &&{\bf x}\in{\mathbf{\Omega}}_{\text{intr}},\ t\geq 0.
            \end{aligned}}\right.
    \end{equation}
    The charge numbers of the three moving charge carriers are given by $z_n$, $z_p$ and $z_a$, the elementary charge is denoted by $q$ and $\varepsilon_s$ refers to the region-dependent dielectric permittivity.  Throughout this paper, we assume the standard charge numbers $z_n=-1$ and $z_p=1$ for electrons and holes. For the anionic vacancies, we will simply assume $z_a>0$ and later for the numerical experiments we set $z_a=1$. Note that all of the following computations can be extended to include movement of \textit{cation} vacancies, i.e.\ movement of negative ion vacancy charge numbers. However, this case seems to be of less physical interest for PSCs. The charge carrier densities are linked to the set of unknowns $(\psi, \varphi_n, \varphi_p, \varphi_a)$ by the state equations \cite{Farrell2017}
    \begin{equation} \label{eq:state-eq}
        n_\alpha = N_\alpha \mathcal{F}_\alpha \Bigl(\eta_\alpha(\varphi_\alpha, \psi) \Bigr), \quad \eta_\alpha = z_\alpha \frac{q(\varphi_\alpha - \psi) + E_\alpha}{k_B T}, \quad \alpha \in \{ n, p, a\},
    \end{equation}
    where $\mathcal{F}_\alpha$ is called \textit{statistics function} which will be discussed in \Cref{sec:statistics}. The quantities $N_n, N_p$ denote the effective conduction and valence density of states, whereas $N_a$ is given by the maximal vacancy concentration. The argument  $\eta_\alpha$ of the statistics function is called chemical potential. It depends on the band-edge energies $E_\alpha$, a constant temperature $T$ and the Boltzmann constant $k_B$. Moreover, the electric currents $\mathbf{j}_\alpha$ for each species are given by
    \begin{equation}
        \mathbf{j}_\alpha =
        - z_\alpha^2 \mu_\alpha
        n_\alpha
        \nabla\varphi_\alpha, \quad \alpha \in \{ n, p, a\}, \label{eq:cont-flux}
    \end{equation}
    with the carrier mobility $\mu_\alpha$. Concerning the right-hand side of the continuity equations \eqref{eq:model-cont-eq-n}, \eqref{eq:model-cont-eq-p} and Poisson equation \eqref{eq:model-poisson} we assume that the doping profile $C$ is bounded, i.e.\ $C \in L^{\infty}(\mathbf{\Omega})$  and that the photogeneration rate satisfies $0 \leq G \in L^{\infty}(\mathbf{\Omega})$. In other words the carrier dependent doping profile and the photogeneration rate are constant in time. It is common to assume a Beer-Lambert generation profile, describing an exponential decay in the $z$ direction, see \Cref{fig:2D-architecture},
    \begin{equation*} \label{eq:generation}
	G(\mathbf{x}) = F_{\text{ph}} \alpha_g \exp(-\alpha_g z ), \quad \mathbf{x}=(x,y,z)^T,
	\end{equation*}
    where $F_{\text{ph}}$ denotes the incident photon flux and  $\alpha_g$ the material absorption coefficient.
    Lastly, the recombination rate $R$ is of the form 
    \cite{Farrell2017}
    \begin{align*}
        R(n_n, n_p) = r(n_n, n_p) n_nn_p \Bigl(1- \exp \left( \varphi_n - \varphi_p \right)  \Bigr), \quad r(n_n, n_p) = \sum_r r_r(n_n, n_p),
    \end{align*}
    where $r_r(n_n, n_p)\geq 0$ is given by the sum of all present recombination processes, which, for PSCs, are radiative and trap-assisted Shockley-Read-Hall recombination
    \begin{align*}
        r_{\text{rad}} = r_{0} \quad  \text{and} \quad r_{\text{SRH}} = \frac{1}{\tau_p (n_n + n_{n, \tau}) + \tau_p (n_p + n_{p, \tau} ) },
    \end{align*}
    where $r_{0}$ is a constant rate coefficient, $\tau_n, \tau_p$ are the carrier life times and $n_{n, \tau}, n_{p, \tau}$ the reference carrier densities.
    Furthermore, we supply the system \eqref{eq:model} with initial conditions for $t = 0$
    \begin{subequations}  \label{eq:initial-cond}
        \begin{align}
            \varphi_n(\mathbf{x}, 0) = \varphi_n^0(\mathbf{x}), \quad &\quad\varphi_p(\mathbf{x}, 0) = \varphi_p^0(\mathbf{x}) \quad &\text{for\;} \mathbf{x} \in \boldsymbol{\Omega},\hphantom{xx}\\
            \varphi_a(\mathbf{x}, 0) &= \varphi_a^0(\mathbf{x}), \quad &\text{for\;} \mathbf{x} \in \boldsymbol{\Omega}_{\text{intr}},
        \end{align}
    \end{subequations}
    where we assume $\varphi_n^0, \varphi_p^0 \in L^{\infty}(\mathbf{\Omega})$ and  $\varphi_a^0 \in L^{\infty}(\mathbf{\Omega}_{\text{intr}})$.
    Correspondingly, we define initial densities $n_\alpha^0(\mathbf{x}) = N_\alpha\mathcal{F}_\alpha(\eta_\alpha(\varphi_\alpha^0,\psi(\mathbf{x},0)))$.
    \subsection{Boundary conditions} \label{sec:BC}
    The outer boundary of $\mathbf{\Omega}$ is decomposed into two ohmic contacts modeled by Dirichlet conditions $\mathbf{\Gamma}^D$ and an isolated interface $\mathbf{\Gamma}^N$, where we impose no flux Neumann boundary conditions. We assume $\mathbf{\Gamma}^D \cap \overline{\mathbf{\Omega}}_{\text{intr}} = \emptyset $, i.e.\ the ohmic contacts are solely located at the outer boundary of transport layers. More precisely, let the Dirichlet values $\psi^D, \varphi^D \in W^{1, \infty}(\mathbf{\Omega})$ be given. Then, the outer boundary conditions are modeled via
    \begin{subequations}\label{eq:outer-bc}
        \begin{alignat}{2}
            \psi(\mathbf{x}, t) = \psi^D(\mathbf{x}), \;\; \varphi_n(\mathbf{x}, t)  = \varphi_p(\mathbf{x}, t) &= \varphi^D(\mathbf{x}), \quad
            ~
            &&\mathbf{x} \in \mathbf{\Gamma}^D,  \ t \geq 0,  \label{eq:Dirichlet-cond} \\[0.5ex]
            ~
            ~
            \nabla \psi(\mathbf{x}, t) \cdot \boldsymbol{\nu}(\mathbf{x}) = \mathbf{j}_n(\mathbf{x}, t) \cdot \boldsymbol{\nu}(\mathbf{x}) = \mathbf{j}_p(\mathbf{x}, t) \cdot \boldsymbol{\nu}(\mathbf{x}) &= 0, \quad
            ~
            &&\mathbf{x} \in \mathbf{\Gamma}^N,  \ t \geq 0,\label{eq:Neumann-cond}
        \end{alignat}
    \end{subequations}
    where $\boldsymbol{\nu}$ is the outward pointing unit normal to $\mathbf{\Gamma}^N$. Note that the same Dirichlet value $\varphi^D$ is imposed on both quasi Fermi potentials.
    Concerning the anion vacancies, we impose no flux Neumann boundary conditions on the whole intrinsic boundary, namely
    \begin{equation}\label{eq:outer-bc_anion}
        \mathbf{j}_{a}(\mathbf{x}, t) \cdot \boldsymbol{\nu}_\text{intr}(\mathbf{x}) = 0, \quad
        \mathbf{x} \in \partial\mathbf{\Omega}_{\text{intr}},  \ t \geq 0,
    \end{equation}
    where $\boldsymbol{\nu}_\text{intr}$ is the outward pointing unit normal to $ \partial\mathbf{\Omega}_{\text{intr}}$.
    \begin{figure}[H]
        \centering
        \includegraphics[scale = 0.65]{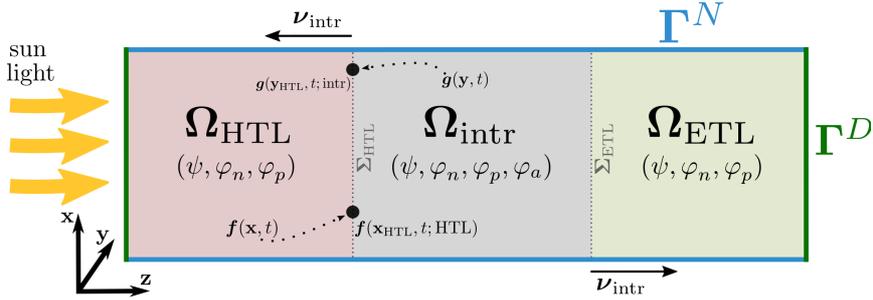}
        \caption{A two-dimensional three-layer device with the relevant potentials stated per subdomain.}
        \label{fig:2D-architecture}
    \end{figure}
    The traces of the potentials $\psi$, $\varphi_n$ and $\varphi_p$ coincide on both sides of the internal boundaries $\mathbf{\Sigma}_{\text{HTL}}$ and  $\mathbf{\Sigma}_{\text{ETL}}$. Moreover, the corresponding fluxes are also continuous across internal boundaries. More precisely, for $t\geq 0$
    \begin{subequations} \label{eq:inner-bc}
        \begin{align}
            \Bigl( \varepsilon_s \nabla \psi(\mathbf{x},t; k )  - \varepsilon_s \nabla \psi(\mathbf{x},t; \text{intr} ) \Bigr)  \cdot \boldsymbol{\nu}_\text{intr}(\mathbf{x})&= 0, \quad \mathbf{x}\in\mathbf{\Sigma}_k,\ k \in \{ \text{HTL}, \text{ETL}\}, \label{eq:inner-bc-psi} \\
            ~
            \Bigl(  \mathbf{j}_n(\mathbf{x},t; k )  - \mathbf{j}_n(\mathbf{x},t; \text{intr} ) \Bigr)  \cdot \boldsymbol{\nu}_\text{intr}(\mathbf{x})&= 0, \quad \mathbf{x}\in\mathbf{\Sigma}_k,\ k \in \{ \text{HTL}, \text{ETL}\}, \label{eq:inner-bc-n}\\
            ~
            \Bigl(  \mathbf{j}_p(\mathbf{x},t; k )  - \mathbf{j}_p(\mathbf{x},t; \text{intr} ) \Bigr)  \cdot \boldsymbol{\nu}_\text{intr}(\mathbf{x})&= 0, \quad\mathbf{x}\in\mathbf{\Sigma}_k,\ k \in \{ \text{HTL}, \text{ETL}\}. \label{eq:inner-bc-p}
        \end{align}
    \end{subequations}
    Here, we use the notation that for any function $f$ the expression $f(\mathbf{x},t; k)$ denotes the trace of $f$, restricted onto $\mathbf{\Omega}_k$, $k \in \{ \text{HTL},\text{ETL}, \text{intr}\}$, evaluated at the respective interface between transport and perovskite layer.

    \begin{remark}[Conservation of mass for anion vacancies]
        Observe that by integrating \eqref{eq:model-cont-eq-a} over $\mathbf{\Omega}_\text{intr}$ and using the Neumann boundary conditions \eqref{eq:outer-bc_anion} the total mass of anionic vacancies is conserved, namely
        \[
        \int_{\mathbf{\Omega}_\text{intr}}n_a(\mathbf{x}, t)\, d\mathbf{x} = \int_{\mathbf{\Omega}_\text{intr}}n_a^0(\mathbf{x})\, d\mathbf{x},\quad \text{for all}\; t\geq0.
        \]
        An equivalent condition does not hold for electron and hole densities due to the boundary conditions and the recombination/generation terms.
    \end{remark}

    \subsection{Statistics functions} \label{sec:statistics}
    Lastly, we need to discuss the choice of statistics functions $\mathcal{F}_\alpha$ in \eqref{eq:state-eq} depending on the charge carrier species $\alpha\in\{n,p,a\}$. Our results will hold under general abstract assumptions on these functions. However, we also provide specific examples below.
    \paragraph{Electric Charge Carriers $\boldsymbol{\alpha \in\{n,p\}}$.}
    For electrons and holes $\alpha \in\{n,p\}$ we assume
    \begin{equation}\tag{H1} \label{hyp:statistics-n-p}
        \left\{
        \begin{aligned}
            &\mathcal{F}_n, \mathcal{F}_p: \mathbb{R} \rightarrow (0,\infty)\text{ are }  C^1\text{- diffeomorphisms};\\[.5em]
            &0 < \mathcal{F}_\alpha'(\eta) \leq \mathcal{F}_\alpha(\eta) \leq \exp(\eta), \quad\eta \in \mathbb{R},\ \alpha \in\{n,p\} .
        \end{aligned}
        \right.
    \end{equation}
    There are two important statistics functions, satisfying assumption~\eqref{hyp:statistics-n-p}, which are commonly used for modeling electric charge transport in PSCs. The first one is given by the \textit{Fermi-Dirac integral of order $\mathit{1/2}$} defined as
    \begin{equation}
        F_{1/2}(\eta) =  \frac{2}{ \sqrt{\pi} } \int_{0}^{\infty} \frac{\xi^{1/2}}{\exp(\xi - \eta) + 1}\: \mathrm{d}\xi, \label{eq:FermiDirac}
    \end{equation}
    which is fundamental in the simulation of inorganic three-dimensional semiconductors \cite{Farrell2017, Sze2006}. The function $F_{1/2}$ behaves like $\eta^{3/2}$ when the chemical potential $\eta$ tends to $+\infty$, namely in the large density limit. In the low density limit, when the chemical potential $\eta$ tends to $-\infty$, it behaves like the Boltzmann statistics function
    \begin{equation}
        F_{B}(\eta) = \exp(\eta), \label{eq:Boltzmann}
    \end{equation}
    which is another important statistics functions for electrons and holes. Observe that the choice  $\mathcal{F}_n = \mathcal{F}_p = F_{1/2}$ leads to nonlinear diffusion in the electric currents \eqref{eq:cont-flux}, whereas $\mathcal{F}_n = \mathcal{F}_p = F_{B}$  yields linear diffusion.
    \paragraph{Ionic Charge Carriers $\boldsymbol{\alpha = a}$.}
   We assume that the statistics function for ionic charge carriers satisfies the following assumption
    \begin{equation}\tag{H2} \label{hyp:statistics-a}
        \left\{
        \begin{aligned}
            &\mathcal{F}_a: \mathbb{R} \rightarrow (0,1)\text{ is a }  C^1\text{- diffeomorphism};\\[.5em]
            &0 < \mathcal{F}_a'(\eta) \leq \mathcal{F}_a(\eta) \leq \exp(\eta), \quad\eta \in \mathbb{R}.
        \end{aligned}
        \right.
    \end{equation}
    Observe that the boundedness of the image of $\mathcal{F}_a$ reflects the boundedness of the anion vacancy density. Such a choice necessarily leads to nonlinear diffusion for the densities in \eqref{eq:model-cont-eq-n}. For anion vacancies in PSCs the \textit{Fermi-Dirac integral of order $\mathit{-1}$} is chosen to reflect the limitation of ion concentration by the lattice sites available in the crystal \cite{Abdel2021}. This particular statistics function reads
    \begin{align}
        F_{-1}(\eta) = \frac{1}{ \exp(-\eta) + 1}. \label{eq:Fermi--1}
    \end{align}
    Note that for both assumptions \eqref{hyp:statistics-n-p} and \eqref{hyp:statistics-a}, the positivity of the statistics functions reflects the positivity of the number densities of charge carriers.
    \subsection{Thermodynamic free energy}
    \label{ssec:free_energy}
    The thermodynamic free energy for the discussed model is given by the sum of different energy contributions. Following \cite{Albinus2002, Kantner2020b}, on the one hand, the contribution of electrons and holes can be derived from a quasi-free Fermi gas. On the other hand, the electric contribution to the total energy is given by the electrostatic field energy. Lastly, assuming an ideal lattice gas \cite{Abdel2021} we can derive a consistent energy contribution of anion vacancies which extends the electric free energy formulation in \cite{Kantner2020b}. Hence, in total the free energy functional for the PSC model reads
    \begin{align*}
        \mathbb{E}_{f}(t) = &\frac12 \int_{\mathbf{\Omega}} \varepsilon_s|\nabla \psi|^2\,d\mathbf{x}
        ~ + \sum_{\alpha \in \{n, p  \} } \int_{\mathbf{\Omega}} \left[ k_B T N_\alpha\Phi_\alpha\left(\frac{n_\alpha}{N_\alpha}\right) - z_\alpha E_\alpha n_\alpha \right] \,d\mathbf{x}
        \\
        ~
        ~& +  \int_{\mathbf{\Omega}_{\text{intr}}} \left[ k_B TN_a\Phi_a\left(\frac{n_a}{N_a}\right) - z_aE_a n_a \right] \,d\mathbf{x},
    \end{align*}
    where $\Phi_\alpha$ is an antiderivative of $\mathcal{F}_\alpha^{-1}$ and where for the sake of simplicity we neglected external interaction effects of the electric potential. Thus, for non-degenerate semiconductors, i.e.\ for $\mathcal{F}_n = \mathcal{F}_p = \exp$ and $\mathcal{F}_a$ chosen as Fermi-Dirac integral of order $-1$ the free energy simplifies to
    \begin{equation} \label{eq:physical-free-energy}
        \begin{split}
            \mathbb{E}_f(t) = &\frac12 \int_{\mathbf{\Omega}} \varepsilon_s|\nabla \psi|^2\,d\mathbf{x}
        ~
        + \sum_{\alpha \in \{ n, p \}} \int_{\mathbf{\Omega}} \left[  k_B T n_\alpha \left(  \log\left(\frac{n_\alpha}{N_\alpha} \right)  - 1 \right) - z_\alpha E_\alpha n_\alpha \right] \,d\mathbf{x}\\
        ~
        &+ \int_{\mathbf{\Omega}_{\text{intr}}}  \left[ k_B T  \left(  n_\alpha \log \left( \frac{n_\alpha}{N_\alpha}\right) + \left( N_\alpha - n_\alpha \right) \log \left( 1- \frac{n_\alpha}{N_\alpha} \right) \right)- z_aE_a n_a \right] \,d\mathbf{x}.
        ~
        \end{split}
    \end{equation}
    \subsection{Non-dimensionalization of the model} \label{sec:nondimens-model}
   In this subsection, we derive the relevant non-dimensional parameters of the model, following \cite{Courtier2018} and \cite[Section 2.4]{markowich1985stationary}. Starting from the charge transport model in \eqref{eq:model}-\eqref{eq:cont-flux}, we rewrite the equations in terms of the scaled variables given as the ratio of the unscaled physical quantity and the scaling factors defined in \Cref{tab:scalingfactors} (where $U_T = k_B T / q$ shall denote the thermal voltage).

   In the following we make several simplifications in order to simplify the presentation and the forthcoming computations.
   More precisely, we assume from now on and until the end of Section~\ref{sec:existence} that the mobilities $\mu_n$ and $\mu_p$, the dielelectric permittivity $\varepsilon_s$, and the effective conduction and valence density of states $N_n$ and $N_p$ are constant in the domain $\mathbf{\Omega}$. Moreover, we assume that $\mu_n = \mu_p$ and $N_n=N_p$. In practice, the previous quantities vary in each subdomain. Finally, the band-edge energy $E_\alpha$ is assumed to be null for all moving charge carriers $\alpha$. In Section~\ref{sec:pscsetup}, we perform numerical simulations with heterogeneous parameters and non-zero band-edge energies. All the analysis of Section~\ref{sec:cont_entropy_dissipation} and Section~\ref{sec:FVM} can be adapted without the previous simplifications. However, apart from creating notational overhead, the key ideas remain the same.
   \begin{table}[h!]
   \centering
   \begin{tabular}{|c|l|c|l|}
        \hline Symbol & Meaning & Scaling factor & Order of magnitude \\\hline&&&\\
        $\:\:\mathbf{x}$ & space variable & $l$ & $10^{-5}\,$cm\\[.5em]
        ~
        $\psi$, $\psi^D$, $\varphi_\alpha$, $\varphi^D$& electric and quasi Fermi potentials & $U_T$ & $10^{-2}\,\text{V}$\\[.75em]
        ~
        $n_n$,  $n_p$ & densities of electrons and holes & $\tilde{N}$ & $10^{18}\,$cm$^{-3}$\\[.5em]
        ~
        $n_a$ & density of anion vacancies & $\tilde{N}_a$ & $10^{21}\,$cm$^{-3}$\\[.5em]
        ~
        $C$ & doping profile & $\tilde{N}$ & $10^{18}\,$cm$^{-3}$\\[.5em]
        ~
        $\mu_n,\mu_p$ & electron and hole mobility & $\tilde{\mu}$ & $10^0\,$cm$^2$V$^{-1}$s$^{-1}$\\[.5em]
        ~
        $\mu_a$ & anion vacancy mobility & $\tilde{\mu}_a$&$10^{-12}\,$cm$^2$V$^{-1}$s$^{-1}$\\[.5em]
        ~
        $t$ & time variable & $\displaystyle\frac{l^2}{\tilde{\mu}_aU_T}$& $10^{4}\,$s\\[.75em]
        ~
        $\mathbf{j}_n, \mathbf{j}_p$ & current density for electrons and holes & $\displaystyle\frac{q U_T \tilde{N}\tilde{\mu}}{l}$&  $10^2\,$Acm$^{-2}$\\[.75em]
        ~
        $\mathbf{j}_a$ & current density for anion vacancies & $\displaystyle\frac{q U_T \tilde{N}_a\tilde{\mu}_a}{l}$&  $10^{-7}\,$Acm$^{-2}$\\[.75em]
        ~
        $R$ & recombination rate & $\displaystyle\frac{\tilde{\mu}U_T\tilde{N}}{l^2}$&$10^{26}\,$cm$^{-3}$s$^{-1}$\\[.75em]
        ~
        $G$ & photogeneration rate & $F_\text{ph}\alpha_g$ &  $10^{22}\,$cm$^{-3}$s$^{-1}$\\[.25em]
        \hline
   \end{tabular}
   \caption{Scaling factors for a PSC device based on the default parameters of \cite{Courtier2019} at $T=298K$.}
   \label{tab:scalingfactors}
    \end{table}

    In Table~\ref{tab:scalingfactors}, the time scale is chosen to be that of the anion vacancies. By replacing $\tilde{\mu}_a$ with $\tilde{\mu}$ in the scaling factor of the time variable, one could write the dimensionless version adapted to the electrons and holes time scale. We assume that the scaling factor $\tilde{N}$ is exactly equal to $N_n=N_p$ and that $\tilde{N}_a = N_a$. Similarly, since we assumed, for simplicity, that the mobilities are constant in the domain, we take $\tilde{\mu}_a = \mu_a$ and $\tilde{\mu} = \mu_n = \mu_p$.
    By denoting the scaled quantities with the same symbol as the corresponding unscaled quantities the dimensionless version of the model  \eqref{eq:model}-\eqref{eq:cont-flux} reads
    \begin{subequations} \label{eq:model-dimless}
        \begin{align}
            \nu\,z_n \partial_t n_n + \nabla\cdot \mathbf{j}_n &=   z_n \Bigl(\gamma\,G(\mathbf{x})-R(n_n,n_p)\Bigr),&&{\bf x}\in\mathbf{\Omega},\ t\geq 0, \label{eq:model-cont-eq-n-dimless}\\
            \nu\,z_p \partial_t n_p + \nabla\cdot \mathbf{j}_p &=  z_p \Bigl(\gamma\,G(\mathbf{x})-R(n_n,n_p) \Bigr),&&{\bf x}\in\mathbf{\Omega},\ t\geq 0, \label{eq:model-cont-eq-p-dimless}\\
            z_a \partial_t n_a + \nabla\cdot \mathbf{j}_a &= 0,&&{\bf x}\in\mathbf{\Omega}_\text{intr},\ t\geq 0, \label{eq:model-cont-eq-a-dimless}
        \end{align}
    \end{subequations}
    coupled to the Poisson equation
    \begin{equation}\label{eq:model-poisson-dimless}
         - \lambda^2\Delta\psi =\left\{{\begin{aligned}
            & \delta( z_nn_n + z_pn_p + C({\bf x})),   &&{\bf x}\in{\mathbf{\Omega}}_{\text{HTL}}\cup{\mathbf{\Omega}}_{\text{ETL}},\ t\geq 0,\\
            &z_an_a + \delta(z_nn_n + z_pn_p  + C({\bf x})),  &&{\bf x}\in{\mathbf{\Omega}}_{\text{intr}},\ t\geq 0.
             \end{aligned}}\right.
    \end{equation}
    The state equation can be rewritten as
    \begin{equation} \label{eq:state-eq-dimless}
         n_\alpha = \mathcal{F}_\alpha \Bigl(z_\alpha(\varphi_\alpha - \psi) \Bigr), \quad \alpha \in \{ n, p, a\},
    \end{equation}
    and we have the following expressions for the charge carrier currents
    \begin{align} \label{eq:cont-flux-dimless}
     \mathbf{j}_\alpha = - z_\alpha^2  n_\alpha \nabla \varphi_\alpha, \quad \alpha \in \{ n, p, a\}.
    \end{align}
    There are four dimensionless parameters, the rescaled Debye length, which is taken with respect to the anion vacancies
    \begin{equation}\label{eq:debye}
    \lambda = \sqrt{\frac{\varepsilon_sU_T}{l^2q\tilde{N}_a}},
    \end{equation}
    the relative mobility of anion vacancies with respect to the mobility of electrons and holes
    \begin{equation}\label{eq:ratiomobilities}
    \nu = \frac{\tilde{\mu}_a}{\tilde{\mu}},
    \end{equation}
    the relative concentration of electric carriers with respect to the anion vacancy concentration
    \begin{equation}\label{eq:ratiodensities}
    \delta = \frac{\tilde{N}}{\tilde{N}_a},
    \end{equation}
    and the rescaled photogeneration rate
    \begin{equation}\label{eq:dimlessphotogen}
    \gamma = \frac{F_\text{ph}\alpha_gl^2}{\tilde{\mu}U_T\tilde{N}}.
    \end{equation}
    The parameter $\nu$ can also be interpreted as the ratio between the electric and ionic carrier time scale. In a typical device all of these parameters are small. More precisely, $\nu\approx 10^{-12}$, $\lambda\approx 10^{-2}$, $\delta\approx10^{-3}$ and $\gamma\approx 10^{-5}$. In particular, the parameters $\nu$ and $\lambda^2$ generate important stiffness in the model, which motivates the use of a robust implicit-in-time numerical scheme (see Section~\ref{sec:FVM}).

    \section{Continuous entropy-dissipation inequality}
    \label{sec:cont_entropy_dissipation}
    For drift-diffusion systems in semiconductor modeling, the natural \emph{a priori} estimate \cite{gajewski85, gajewski86} is based on the evolution of a global quantity which has the physical meaning of a free energy. In the following, we call this quantity \emph{(relative) entropy},  in the sense of \emph{entropy method} for PDEs rather than in the physical sense.

    \subsection{Entropy functions}\label{sec:entropy}
     For $\alpha\in\{n,p,a\}$, we define the relative entropy function $\Phi_\alpha$, associated with the statistics $\mathcal{F}_\alpha$, to be an anti-derivative of the inverse statistics function namely
    \begin{equation}\label{eq:entropyfunc}
        \Phi_\alpha'(x) = \mathcal{F}_\alpha^{-1}(x), \quad x \geq 0.
    \end{equation}
    Observe that \eqref{hyp:statistics-n-p} and \eqref{hyp:statistics-a} imply that the statistics function is strictly increasing and therefore $\Phi_\alpha$ is strictly convex. Of course equation \eqref{eq:entropyfunc} does not define $\Phi_\alpha$ uniquely, but the value of the constant is not crucial for $\alpha=n,p$ in what follows because we will introduce relative entropies. The constant may be taken in general to ensure that $\Phi_\alpha$ is non-negative and vanishes at only one point, which is indeed necessary for $\alpha=a$.

    We also define the relative entropy $H_\alpha$ by
    \begin{align} \label{eq:H-function}
        H_\alpha(x,y) = \Phi_\alpha(x) - \Phi_\alpha(y) - \Phi_\alpha'(y)(x-y), \quad x \geq 0.
    \end{align}
    Observe that $H_\alpha$ is non-negative due to the convexity of $\Phi_\alpha$.

    \paragraph{Examples.} Let us give two examples for the typical statistics functions of the electric and ionic charge carriers. In the case of the Boltzmann statistics, one has
    \[
    \mathcal{F}_n(\eta) = \mathcal{F}_p(\eta) = e^\eta,\qquad \Phi_n(x) = \Phi_p(x) = x\log(x) -x +1.
    \]
    In the case of the Fermi-Dirac integral of order $-1$ for $\mathcal{F}_a$, one has
    \[
    \mathcal{F}_a(\eta) = \frac{1}{ \exp(-\eta) + 1},\qquad \Phi_a(x) = x\log(x) +(1-x)\log(1-x)+\log(2).
    \]
    Note that both examples for the mathematical entropy functions coincide with the respective physical free energy contributions in \eqref{eq:physical-free-energy}.
  
    \paragraph{Properties of the entropy and relative entropy function.}
    Let us state some useful results for the entropy functions. The proofs may be found in \Cref{app:estimates-statistics}.
    \begin{lemma}\label{lemma:bound-primitive-argument}
        One has the following bounds on the entropy functions \eqref{eq:entropyfunc} and \eqref{eq:H-function}.
        \begin{itemize}
            \item[(i)] Let $\mathcal{F}_\alpha$ be a statistics function satisfying \eqref{hyp:statistics-n-p} and $H_\alpha$ be the associated relative entropy function. Then, for any  $\varepsilon > 0$ and $y_0\geq 0$, there exists a constant $c_{y_0,\varepsilon}>0$ such that
            \begin{align*}
                x \leq c_{y_0, \varepsilon} + \varepsilon H_\alpha(x,y), \quad  \text{for all}\; x\geq0,\  y\in[0,y_0].
            \end{align*}
            \item[(ii)] Let $\mathcal{F}_a$ be a statistics function satisfying \eqref{hyp:statistics-a} and $\Phi_a$ be the associated entropy function. Then, for any  $\varepsilon > 0$, there exists a constant $c_{\varepsilon}>0$ such that
            \begin{align} \label{eq:fM1-legendre-bound}
                x \leq c_\varepsilon + \varepsilon \Phi_a(x), \quad \text{for all}\; x\geq 0.
            \end{align}
        \end{itemize}
    \end{lemma}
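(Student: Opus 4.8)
Both inequalities are of Young type, and the common engine is the pointwise control of the inverse statistics function contained in \eqref{hyp:statistics-n-p} and \eqref{hyp:statistics-a}. Applying the increasing map $\mathcal{F}_\alpha^{-1}$ to $\mathcal{F}_\alpha(\eta)\le e^{\eta}$ gives
\[
\Phi_\alpha'(x)=\mathcal{F}_\alpha^{-1}(x)\ \ge\ \log x,
\]
for $x>0$ if $\alpha\in\{n,p\}$ and for $x\in(0,1)$ if $\alpha=a$. For the electric carriers this forces $\Phi_\alpha$ to grow superlinearly at $+\infty$, which drives part~(i); for the anion vacancies part~(ii) is much softer and only exploits that $\mathcal{F}_a$ takes values in the bounded interval $(0,1)$.

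I would dispose of (ii) first, essentially for free. Since $\mathcal{F}_a$ maps $\mathbb{R}$ onto $(0,1)$, the inverse $\mathcal{F}_a^{-1}$ -- hence $\Phi_a$ through \eqref{eq:entropyfunc} -- is defined on $[0,1]$ (extend by $\Phi_a=+\infty$ on $(1,\infty)$), and the integration constant is chosen, as announced in the text, so that $\Phi_a\ge 0$. For $x>1$ the claimed bound is vacuous, and for $x\in[0,1]$ one just writes $x\le 1\le 1+\varepsilon\,\Phi_a(x)$; thus $c_\varepsilon=1$ works for every $\varepsilon>0$. Equivalently, in Legendre language, the sharp constant is $c_\varepsilon=\sup_{x\ge0}\bigl(x-\varepsilon\Phi_a(x)\bigr)=\varepsilon\,\Phi_a^{\ast}(1/\varepsilon)$, which is finite because $\Phi_a$ is bounded on its domain.

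For (i) I would use convex duality directly. Write $\Phi_\alpha^{\ast}$ for the Legendre transform of the strictly convex $\Phi_\alpha$; then $(\Phi_\alpha^{\ast})'=\mathcal{F}_\alpha$, Young's inequality $\xi x\le\Phi_\alpha(x)+\Phi_\alpha^{\ast}(\xi)$ holds for all $x\ge0$, $\xi\in\mathbb{R}$, with equality at $\xi=\Phi_\alpha'(y)$, $x=y$, i.e.\ $\Phi_\alpha^{\ast}(\Phi_\alpha'(y))=\Phi_\alpha'(y)y-\Phi_\alpha(y)$. Substituting this into \eqref{eq:H-function} gives $H_\alpha(x,y)=\Phi_\alpha(x)-\Phi_\alpha'(y)x+\Phi_\alpha^{\ast}(\Phi_\alpha'(y))$, while Young's inequality with $\xi=\Phi_\alpha'(y)+1/\varepsilon$ gives $\tfrac{x}{\varepsilon}\le\Phi_\alpha(x)-\Phi_\alpha'(y)x+\Phi_\alpha^{\ast}(\Phi_\alpha'(y)+1/\varepsilon)$. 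Subtracting,
\[
x\ \le\ \varepsilon\Bigl(\Phi_\alpha^{\ast}\bigl(\Phi_\alpha'(y)+\tfrac{1}{\varepsilon}\bigr)-\Phi_\alpha^{\ast}\bigl(\Phi_\alpha'(y)\bigr)\Bigr)+\varepsilon\,H_\alpha(x,y)\ =\ \varepsilon\int_{\Phi_\alpha'(y)}^{\Phi_\alpha'(y)+1/\varepsilon}\mathcal{F}_\alpha(\xi)\,\mathrm{d}\xi\ +\ \varepsilon\,H_\alpha(x,y),
\]
and using $\mathcal{F}_\alpha(\xi)\le e^{\xi}$ together with the monotonicity $\Phi_\alpha'(y)=\mathcal{F}_\alpha^{-1}(y)\le\mathcal{F}_\alpha^{-1}(y_0)$ for $y\in(0,y_0]$, the integral is at most $e^{\mathcal{F}_\alpha^{-1}(y_0)}(e^{1/\varepsilon}-1)$, independent of $y$. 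This yields $c_{y_0,\varepsilon}=\varepsilon\,e^{\mathcal{F}_\alpha^{-1}(y_0)}(e^{1/\varepsilon}-1)$; the case $y=0$ is trivial, since $\Phi_\alpha'(0^{+})=-\infty$ makes $H_\alpha(x,0)=+\infty$ for $x>0$.

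The step I would watch most carefully -- and the only genuine issue -- is the uniformity of $c_{y_0,\varepsilon}$ over the closed interval $[0,y_0]$ including the singular endpoint. The apparent obstruction is that $\Phi_\alpha'(y)\to-\infty$ as $y\to0^{+}$, so $\Phi_\alpha'$ is not bounded below on $(0,y_0]$; but this works in our favour, since a more negative $\Phi_\alpha'(y)$ only enlarges the affine-in-$x$ part of $H_\alpha(x,y)$, and in the estimate above it is $e^{\Phi_\alpha'(y)}=e^{\mathcal{F}_\alpha^{-1}(y)}$, not $\Phi_\alpha'(y)$ itself, that appears, and this stays bounded by $e^{\mathcal{F}_\alpha^{-1}(y_0)}$ by monotonicity. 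If one prefers to avoid convex duality, the same bound follows by splitting into $x$ large and $x$ small: for $x>y$ one lower-bounds $H_\alpha(x,y)\ge\int_y^x(\log s-\mathcal{F}_\alpha^{-1}(y_0))\,\mathrm{d}s$, which grows like $x\log x$ and hence absorbs $x$ once $x$ exceeds a threshold depending only on $\varepsilon$ and $y_0$, while for $x\le y\le y_0$ the bound is immediate from $H_\alpha\ge0$.
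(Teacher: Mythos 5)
Your proof is correct and, for part (i), is essentially the paper's argument in different clothing: the quantity $\varepsilon\bigl(\Phi_\alpha^{\ast}(\Phi_\alpha'(y)+1/\varepsilon)-\Phi_\alpha^{\ast}(\Phi_\alpha'(y))\bigr)$ you obtain from Young's inequality is exactly the Legendre transform $\sup_{x}\bigl(x-\varepsilon H_\alpha(x,y)\bigr)$ that the paper computes directly and bounds by $\mathcal{F}_\alpha\bigl(\tfrac{1}{\varepsilon}+\mathcal{F}_\alpha^{-1}(y_0)\bigr)$, the only cosmetic difference being that you additionally invoke $\mathcal{F}_\alpha\le\exp$ to bound the resulting integral. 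For part (ii) you take a legitimate shortcut the paper does not (using only that $\mathcal{F}_a<1$, so $x\le 1$ on the relevant range and $c_\varepsilon=1$ works), whereas the paper repeats the same Legendre computation to get $c_\varepsilon=\mathcal{F}_a(1/\varepsilon)$; both are valid.
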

    Under a last assumption on the statistics functions for electrons and holes
    \begin{equation}\tag{H3} \label{hyp:limit-H-finverse}
        \lim_{x \rightarrow +\infty}\frac{ H_\alpha(x, y_0)}{\mathcal{F}_\alpha^{-1} (x)} = +\infty, \quad \text{for} \; y_0\geq 0 \quad \text{and} \quad \alpha = n,p,
    \end{equation}
    we have the following result.
    \begin{lemma} \label{lemma:relation-inverse-primitive}
        Let $\mathcal{F}_\alpha$ with $\alpha = n, p$ be a statistics function satisfying \eqref{hyp:statistics-n-p} and \eqref{hyp:limit-H-finverse}. Then, for any  $\varepsilon > 0$ and $y_0\geq 0$, there exists a constant $c_{y_0,\varepsilon}>0$ such that
        \begin{align*}
            \max(\mathcal{F}_\alpha^{-1} (x),0) \leq c_{y_0, \varepsilon} + \varepsilon H_\alpha(x, y) ,\quad  \text{for all}\; x\geq0,\  y\in[0,y_0].
        \end{align*}
    \end{lemma}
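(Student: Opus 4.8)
The plan is to split the estimate according to the size of $x$: on a bounded range of $x$ the left-hand side is bounded and the inequality is trivial since $H_\alpha\ge0$, while for large $x$ the superlinear growth hypothesis \eqref{hyp:limit-H-finverse} delivers the bound with any prescribed small constant in front of $H_\alpha$. The one point requiring care is that \eqref{hyp:limit-H-finverse} is stated only at the reference point $y_0$, so I first need to transfer control from $H_\alpha(x,y_0)$ to $H_\alpha(x,y)$ for all $y\in[0,y_0]$.

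For that transfer I would use a monotonicity property of the relative entropy in its second slot. From \eqref{hyp:statistics-n-p}, $\mathcal{F}_\alpha^{-1}=\Phi_\alpha'$ is an increasing $C^1$ function, hence $\Phi_\alpha''=(\mathcal{F}_\alpha^{-1})'>0$, and differentiating \eqref{eq:H-function} in $y$ yields $\partial_y H_\alpha(x,y)=-\Phi_\alpha''(y)(x-y)$. Therefore, whenever $x\ge y_0$, the map $y\mapsto H_\alpha(x,y)$ is non-increasing on $[0,y_0]$, so that $H_\alpha(x,y)\ge H_\alpha(x,y_0)$ for every $y\in[0,y_0]$. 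Also, since $\mathcal{F}_\alpha:\mathbb{R}\to(0,\infty)$ is a diffeomorphism, $\mathcal{F}_\alpha^{-1}$ is increasing and $\mathcal{F}_\alpha^{-1}(x)\to+\infty$ as $x\to+\infty$; in particular $\mathcal{F}_\alpha^{-1}(x)>0$ for $x$ large, so both $\max(\mathcal{F}_\alpha^{-1}(x),0)=\mathcal{F}_\alpha^{-1}(x)$ and the quotient in \eqref{hyp:limit-H-finverse} are eventually meaningful and the hypothesis can be invoked directly.

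Given $\varepsilon>0$, \eqref{hyp:limit-H-finverse} then provides $M_\varepsilon\ge y_0$ (also large enough that $\mathcal{F}_\alpha^{-1}>0$ beyond it) such that $\mathcal{F}_\alpha^{-1}(x)\le\varepsilon H_\alpha(x,y_0)$ for all $x\ge M_\varepsilon$; combined with the monotonicity step this gives $\max(\mathcal{F}_\alpha^{-1}(x),0)\le\varepsilon H_\alpha(x,y)$ for all $x\ge M_\varepsilon$ and $y\in[0,y_0]$. For $0\le x\le M_\varepsilon$, since $\max(\mathcal{F}_\alpha^{-1}(\cdot),0)$ is non-decreasing, it is bounded by $c_{y_0,\varepsilon}:=\max(\mathcal{F}_\alpha^{-1}(M_\varepsilon),0)$, and the inequality follows from $H_\alpha\ge0$. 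Choosing $c_{y_0,\varepsilon}$ (which may be enlarged harmlessly) to cover both regimes finishes the proof.

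I expect the only genuine obstacle to be the second-variable monotonicity of $H_\alpha$ — that is, realizing one should reduce to $y=y_0$ via the sign of $\partial_y H_\alpha$ rather than trying to apply \eqref{hyp:limit-H-finverse} uniformly in $y$ — together with checking that $\mathcal{F}_\alpha^{-1}(x)$ is eventually positive so that the $\max$ and the growth hypothesis line up; the remainder is a routine compactness argument on the bounded range of $x$.
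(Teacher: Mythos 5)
Your proof is correct and follows essentially the same route as the paper: invoke \eqref{hyp:limit-H-finverse} at the reference point $y_0$ for $x$ beyond some threshold, transfer to general $y\in[0,y_0]$ via the sign of $\partial_y H_\alpha(x,y)=-\Phi_\alpha''(y)(x-y)$, and absorb the bounded range of $x$ into the constant using the monotonicity of $\mathcal{F}_\alpha^{-1}$ and the non-negativity of $H_\alpha$. No gaps.
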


    We will also show in \Cref{app:estimates-statistics} that the Boltzmann statistics and the Fermi-Dirac statistics of order $1/2$ both satisfy \eqref{hyp:statistics-n-p}, \eqref{hyp:limit-H-finverse}, while the Fermi-Dirac statistics of order $-1$ satisfies \eqref{hyp:statistics-a}.
    \subsection{Proof of the entropy-dissipation inequality}
    The thermodynamic free energy introduced in Subsection \ref{ssec:free_energy} is of physical interest. Now, however, we would like to prove an  entropy-dissipation inequality whose discrete counterpart will allow us to prove the existence of a discrete solution and the stability of the scheme. For this reason, we introduce a variation of this functional which from now we will refer to as \textit{total relative entropy} in agreement with the mathematical literature.
    Adapting the functional of \cite{bessemoulin14, jungel01} to our system, the total relative entropy with respect to the Dirichlet boundary values $\psi^D, \varphi^D$ is given by
    \begin{equation}\label{eq:continuous-entropy}
        \mathbb{E}(t) = \frac{\lambda^2}{2}\int_{\mathbf{\Omega}} |\nabla (\psi - \psi^D)|^2\,d\mathbf{x}
        + \int_{\mathbf{\Omega}_{\text{intr}}} \Phi_a(n_a)\,d\mathbf{x}
        + \delta\sum_{\alpha \in \{n, p\}}
        \int_{\mathbf{\Omega}} H_\alpha(n_\alpha, n_\alpha^D)  \,d\mathbf{x},
    \end{equation}
    where the entropy functions $\Phi_a$, $H_n$ and $H_p$ are given by \eqref{eq:entropyfunc}, \eqref{eq:H-function} and $n_\alpha^D$ can be calculated by inserting $\varphi^D, \psi^D$ into the state equation  \eqref{eq:state-eq}.
    Note that due to our specific choice for $\Phi_a$ the middle term is
    non-negative as well which implies
    that the entropy is non-negative.
    Taking into account the fact that $z_\alpha^2=1$ for $\alpha\in\{n,p\}$, the associated non-negative dissipation $\mathbb{D}$ is defined as
        \begin{align} \label{eq:continuous-dissipation}
            \mathbb{D}(t) = \frac{\delta}{\nu}\int_\mathbf{\Omega} R(n_n, n_p) \left( \varphi_p - \varphi_n \right)\,d\mathbf{x} + \frac{z_a^2}{2}\int_\mathbf{\Omega_\text{intr}} n_a \vert \nabla \varphi_a\vert^2  \,d\mathbf{x} + \frac{\delta}{2\nu}\sum_{\alpha \in \{n, p\}}  \int_\mathbf{\Omega} n_\alpha \vert \nabla \varphi_\alpha\vert^2  \,d\mathbf{x}.
        \end{align}
    \begin{theorem}(\textbf{Continuous entropy-dissipation inequality}) \label{thm:cont-E-D}
        Consider a smooth solution to the model \eqref{eq:model-dimless}--\eqref{eq:cont-flux-dimless}, with initial conditions \eqref{eq:initial-cond} and boundary conditions \eqref{eq:outer-bc}, \eqref{eq:outer-bc_anion}, \eqref{eq:inner-bc}. Then, for any $\varepsilon>0$, there is a constant $c_{\varepsilon, \mathbf{\Omega}}>0$
        \begin{align} \label{eq:continuous-E-D-inequality}
            \frac{\text{d}}{\text{d}t}\mathbb{E}(t)  +  \mathbb{D}(t) \leq c_{\varepsilon, \mathbf{\Omega}} + \varepsilon \mathbb{E}(t),
        \end{align}
        where the entropy is defined in \eqref{eq:continuous-entropy} and the dissipation of entropy in  \eqref{eq:continuous-dissipation}. The constant $c_{\varepsilon, \mathbf{\Omega}}$ depends only on $\varepsilon$, $\mathbf{\Omega}$, on the boundary data and the photogeneration term via the norms $\|G\|_{L^\infty}, \|\varphi^D\|_{W^{1,\infty}}$ and $\|\psi^D\|_{W^{1,\infty}}$, on $z_a$ and on the dimensionless parameters $\delta$, $\gamma$ and $\nu$.
    \end{theorem}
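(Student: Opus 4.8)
### Proof proposal

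The plan is to differentiate the entropy $\mathbb{E}(t)$ in time, insert the evolution equations \eqref{eq:model-cont-eq-n-dimless}--\eqref{eq:model-cont-eq-a-dimless} and the Poisson equation \eqref{eq:model-poisson-dimless}, integrate by parts, and recognize the resulting bulk terms as the dissipation $\mathbb{D}(t)$ up to an error that must be controlled by $c_{\varepsilon,\mathbf{\Omega}} + \varepsilon\mathbb{E}(t)$. Concretely, I would first handle the electrostatic part: differentiating $\frac{\lambda^2}{2}\int_{\mathbf{\Omega}}|\nabla(\psi-\psi^D)|^2$ gives $\lambda^2\int_{\mathbf{\Omega}}\nabla(\psi-\psi^D)\cdot\nabla\partial_t\psi$, and after an integration by parts (the boundary terms vanish on $\mathbf{\Gamma}^D$ since $\psi=\psi^D$ there and on $\mathbf{\Gamma}^N$ by the Neumann condition) one replaces $-\lambda^2\Delta\psi$ using \eqref{eq:model-poisson-dimless}. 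This produces terms of the form $\int(\text{charge density})\,\partial_t\psi$. The charge-carrier parts are handled via the key identity that $\Phi_\alpha'(n_\alpha) = \mathcal{F}_\alpha^{-1}(n_\alpha) = z_\alpha(\varphi_\alpha-\psi)$ by \eqref{eq:state-eq-dimless} and \eqref{eq:entropyfunc}; hence $\frac{d}{dt}\Phi_\alpha(n_\alpha) = z_\alpha(\varphi_\alpha-\psi)\partial_t n_\alpha$, and similarly $\frac{d}{dt}H_\alpha(n_\alpha,n_\alpha^D) = (\Phi_\alpha'(n_\alpha)-\Phi_\alpha'(n_\alpha^D))\partial_t n_\alpha = z_\alpha(\varphi_\alpha - \varphi^D - \psi + \psi^D)\partial_t n_\alpha$ (using that $n_\alpha^D$ is time-independent and that $\psi^D$, $\varphi^D$ are too).

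Next I would substitute $\partial_t n_\alpha$ from the continuity equations. For the anion term, $z_a\partial_t n_a = -\nabla\cdot\mathbf{j}_a = \nabla\cdot(z_a^2 n_a\nabla\varphi_a)$, so $\int_{\mathbf{\Omega}_{\text{intr}}}\frac{d}{dt}\Phi_a(n_a) = \int_{\mathbf{\Omega}_{\text{intr}}}(\varphi_a-\psi)\nabla\cdot(z_a^2 n_a\nabla\varphi_a)$; integrating by parts (the boundary term dies by \eqref{eq:outer-bc_anion}) yields $-z_a^2\int_{\mathbf{\Omega}_{\text{intr}}} n_a\nabla\varphi_a\cdot\nabla(\varphi_a-\psi) = -z_a^2\int n_a|\nabla\varphi_a|^2 + z_a^2\int n_a\nabla\varphi_a\cdot\nabla\psi$. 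For the electron/hole terms, using $\nu z_\alpha\partial_t n_\alpha = -\nabla\cdot\mathbf{j}_\alpha + z_\alpha(\gamma G - R)$ and $\mathbf{j}_\alpha = -n_\alpha\nabla\varphi_\alpha$ (since $z_\alpha^2=1$), a parallel integration by parts produces $-\frac{\delta}{\nu}\int n_\alpha|\nabla\varphi_\alpha|^2$, a cross term $+\frac{\delta}{\nu}\int n_\alpha\nabla\varphi_\alpha\cdot\nabla\psi$, the recombination contribution, and leftover terms involving $\gamma G$, $\psi^D$, $\varphi^D$. The crucial cancellation is that the cross terms $\int n_\alpha\nabla\varphi_\alpha\cdot\nabla\psi$ combine (with the right coefficients $\delta/\nu$ and $z_a^2$) exactly with the flux-times-$\nabla\partial_t\psi$ electrostatic term after one recognizes $\nabla\cdot\mathbf{j}_\alpha = -\partial_t(\text{scaled density})\times(\text{coefficient})$ fed back into Poisson; tracking the non-dimensional constants so that these genuinely cancel is the bookkeeping heart of the argument, and mirrors \cite{bessemoulin14, jungel01}. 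The recombination term assembles into $\frac{\delta}{\nu}\int R(n_n,n_p)(\varphi_p-\varphi_n)$ using the algebraic structure of $R$, noting $R\geq 0$ and $R\,(\varphi_p-\varphi_n)\geq 0$ because $R$ carries the factor $(1-e^{\varphi_n-\varphi_p})$, which has the opposite sign of $(\varphi_p - \varphi_n)$.

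After these cancellations one is left with $\frac{d}{dt}\mathbb{E}(t) + \mathbb{D}(t) = (\text{remainder})$, where the remainder collects: (a) a photogeneration term $\pm\frac{\delta\gamma}{\nu}\int_{\mathbf{\Omega}}G\,z_\alpha(\varphi_\alpha - \varphi^D - \psi + \psi^D)$ summed over $\alpha\in\{n,p\}$, (b) boundary-data terms from carrying $\psi^D$, $\varphi^D$ through the integration by parts, of the form $\int n_\alpha\nabla\varphi_\alpha\cdot\nabla\psi^D$ and similar. These are estimated as follows. Terms like $\int n_\alpha\nabla\varphi_\alpha\cdot\nabla\psi^D$ are bounded by $\frac14\int n_\alpha|\nabla\varphi_\alpha|^2 + \int n_\alpha|\nabla\psi^D|^2$ via Young's inequality; the first piece is absorbed into $\mathbb{D}$ (this is why $\mathbb{D}$ was defined with the factor $\frac{1}{2\nu}$ rather than $\frac1\nu$ — it leaves slack), and the second piece $\int n_\alpha|\nabla\psi^D|^2 \leq \|\nabla\psi^D\|_{L^\infty}^2\int n_\alpha$ is controlled using \Cref{lemma:bound-primitive-argument}(i): $\int_{\mathbf{\Omega}}n_\alpha \leq c_{y_0,\varepsilon'}|\mathbf{\Omega}| + \varepsilon'\int H_\alpha(n_\alpha,n_\alpha^D)$, where $y_0$ bounds $n_\alpha^D = N_\alpha\mathcal{F}_\alpha(\dots)$ in terms of $\|\varphi^D\|_{L^\infty}, \|\psi^D\|_{L^\infty}$ — this is where those norms enter the constant. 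The photogeneration term is handled likewise: $|G\,\varphi_\alpha|$ needs a bound on $\varphi_\alpha$, but $\varphi_\alpha = \frac{1}{z_\alpha}\mathcal{F}_\alpha^{-1}(n_\alpha) + \psi$, so $|\varphi_\alpha| \leq |\mathcal{F}_\alpha^{-1}(n_\alpha)| + |\psi|$; the term $\int G|\mathcal{F}_\alpha^{-1}(n_\alpha)|$ is controlled by \Cref{lemma:relation-inverse-primitive} (invoking \eqref{hyp:limit-H-finverse}), giving $\leq c + \varepsilon'\int H_\alpha$, while $\int G|\psi|$ requires $\int|\psi|^2$, which by Poincaré (using $\psi-\psi^D$ vanishes on $\mathbf{\Gamma}^D$) is bounded by $\frac{C_{\mathbf{\Omega}}}{\lambda^2}\cdot\frac{\lambda^2}{2}\|\nabla(\psi-\psi^D)\|^2 + \|\psi^D\|^2 \leq c_{\mathbf{\Omega}} + \frac{C}{\lambda^2}\mathbb{E}(t)$ — this is the source of the $\varepsilon\mathbb{E}(t)$ term on the right (one absorbs the Poincaré constant by choosing $\varepsilon'$ appropriately). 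Collecting all constants gives \eqref{eq:continuous-E-D-inequality}.

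\textbf{Main obstacle.} The delicate point is not any single estimate but the exact cancellation of the electrostatic cross-terms against the drift-diffusion cross-terms, which only works because of the specific non-dimensional scaling — the factor $\delta$ multiplying the electron/hole contribution in both the entropy \eqref{eq:continuous-entropy} and the Poisson equation \eqref{eq:model-poisson-dimless}, the factor $\nu$ in front of the time derivatives, and the fact that the anion term in Poisson carries no $\delta$ while its entropy contribution carries no $\delta$ either. Verifying that these constants line up so that $\frac{d}{dt}\mathbb{E} + \mathbb{D}$ has exactly the stated form, with all genuinely dissipative terms having the correct sign, is the part requiring the most care; the subsequent absorption estimates are routine applications of Young, Poincaré, and the two lemmas on entropy functions.
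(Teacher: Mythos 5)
Your overall strategy coincides with the paper's: differentiate $\mathbb{E}$, integrate the electrostatic term by parts and substitute the Poisson equation so that the $\psi$--contributions from the state equation cancel, insert the balance equations, integrate by parts again, split the drift terms with Young's inequality (this is indeed why $\mathbb{D}$ carries the factor $1/2$), and absorb the remainders with Lemma~\ref{lemma:bound-primitive-argument} and Lemma~\ref{lemma:relation-inverse-primitive}. However, your treatment of the photogeneration remainder contains a genuine gap. Once the Poisson contribution $\delta z_\alpha(\psi-\psi^D)\partial_t n_\alpha$ is added to the entropy contribution $\delta\bigl(\mathcal{F}_\alpha^{-1}(n_\alpha)-\mathcal{F}_\alpha^{-1}(n_\alpha^D)\bigr)\partial_t n_\alpha = \delta z_\alpha(\varphi_\alpha-\psi-\varphi^D+\psi^D)\partial_t n_\alpha$, the multiplier collapses to $\delta z_\alpha(\varphi_\alpha-\varphi^D)$: no $\psi$ survives. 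Consequently the photogeneration remainder is
\[
\frac{\delta\gamma}{\nu}\int_{\mathbf{\Omega}} G\,(\varphi_p-\varphi_n)\,d\mathbf{x}
=\frac{\delta\gamma}{\nu}\sum_{\alpha\in\{n,p\}}\int_{\mathbf{\Omega}} G\,\mathcal{F}_\alpha^{-1}(n_\alpha)\,d\mathbf{x},
\]
since $z_n=-1$, $z_p=1$ make the $\varphi^D$ terms cancel and the state equation turns $\varphi_p-\varphi_n$ into $\mathcal{F}_p^{-1}(n_p)+\mathcal{F}_n^{-1}(n_n)$ with the $\psi$'s cancelling. This is handled entirely by Lemma~\ref{lemma:relation-inverse-primitive}. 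Your version instead keeps the multiplier $z_\alpha(\varphi_\alpha-\varphi^D-\psi+\psi^D)$ (i.e.\ the pre-cancellation form), isolates a term $\int G|\psi|$, and invokes a Poincar\'e inequality to control $\|\psi\|_{L^2}$ by the electrostatic part of $\mathbb{E}$. That step forces the constant to scale like $\lambda^{-2}$, because $\frac{\lambda^2}{2}\|\nabla(\psi-\psi^D)\|_{L^2}^2\leq\mathbb{E}$ only gives $\|\psi-\psi^D\|_{L^2}^2\lesssim\lambda^{-2}\mathbb{E}$. The theorem explicitly asserts that $c_{\varepsilon,\mathbf{\Omega}}$ is independent of $\lambda$ — and this matters, since $\lambda\approx 10^{-2}$ is one of the small stiffness parameters — so your route proves a strictly weaker statement. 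The fix is simply to perform the Poisson cancellation before estimating the photogeneration term; then no bound on $\psi$ itself is ever required.

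Two further minor slips in the recombination discussion: $R=r\,n_nn_p(1-e^{\varphi_n-\varphi_p})$ is \emph{not} pointwise non-negative (it changes sign with $\varphi_p-\varphi_n$), and the factor $(1-e^{\varphi_n-\varphi_p})$ has the \emph{same} sign as $\varphi_p-\varphi_n$, not the opposite one — that is precisely why the product $R(\varphi_p-\varphi_n)$ appearing in $\mathbb{D}$ is non-negative. Your conclusion is right but the stated justification inverts the sign.
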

    \begin{remark}[Thermodynamic equilibrium] \label{rem:thermodynamic-eq}
        If the boundary data is at thermodynamic equilibrium, \emph{i.e.} $\nabla\varphi^D = \nabla\psi^D = \mathbf{0} $ and without external generation of electric carriers, \emph{i.e.} $G=0$, then the entropy-dissipation inequality simplifies to
        \[
        \frac{\text{d}}{\text{d}t}\mathbb{E}(t)  +  \mathbb{D}(t)\leq 0.
        \]
        Indeed, while we do not specify precisely the dependencies of the constant on the data it is clear from the proof of Theorem~\ref{thm:cont-E-D} that the right hand-side of \eqref{eq:continuous-E-D-inequality} vanishes in this setting. In this case the entropy decays in time and the solution is expected to converge exponentially fast towards the thermodynamic equilibrium $(\varphi_n^\text{eq},\varphi_p^\text{eq}, \varphi_a^\text{eq}, \psi^\text{eq})$. This thermodynamic equilibrium is such that the quasi Fermi potentials for electrons and holes are constant on $\mathbf{\Omega}$
        \[
        \varphi_p^\text{eq} = \varphi_n^\text{eq} = \varphi^D
        \]
        and $\varphi_a^\text{eq}$ is constant on $\mathbf{\Omega}_{\textrm{intr}}$, determined by the conservation of mass for anion vacancies
        \[
         \int_{\mathbf{\Omega}_{\text{intr}}} n_a(\varphi_a^\text{eq}, \psi^\text{eq} ) \,d\mathbf{x} = \int_{\mathbf{\Omega}_{\text{intr}}}n_a(\mathbf{x}, 0)\,d\mathbf{x},
        \]
        where the electric potential $\psi^\text{eq}$ satisfies the following nonlinear Poisson equation
        \[
        - \lambda^2\Delta \psi^\text{eq} =\left\{{\begin{aligned}
                &\delta\left(n_p(\varphi_p^\text{eq}, \psi^\text{eq} ) -n_n(\varphi_n^\text{eq}, \psi^\text{eq} )  + C({\bf x})\right),&&{\bf x}\in{\mathbf{\Omega}}_{\text{HTL}}\cup{\mathbf{\Omega}}_{\text{ETL}},\\
                ~
                &  z_a n_a(\varphi_a^\text{eq}, \psi^\text{eq} ) + \delta\left(n_p(\varphi_p^\text{eq}, \psi^\text{eq} ) - n_n(\varphi_n^\text{eq}, \psi^\text{eq} )  + C({\bf x})\right),&&{\bf x}\in{\mathbf{\Omega}}_{\text{intr}}.\
            \end{aligned}}\right.
        \]
    The system is supplemented with the Dirichlet and Neumann boundary conditions  \eqref{eq:Dirichlet-cond} and \eqref{eq:Neumann-cond} for the electric potential. The proof of this asymptotic behavior is beyond the scope of the present paper but could be investigated following the lines of the seminal work of Gajewski \cite{gajewski85}.
    \end{remark}
    \begin{proof}[Proof of Theorem~\ref{thm:cont-E-D}]
        First, let us take the derivative of \eqref{eq:continuous-entropy} with respect to time
        \begin{equation} \label{eq:Dirichlet-dt_E}
            \begin{aligned}
                \frac{\text{d}}{\text{d}t}\mathbb{E}(t) &= \lambda^2\int_{\mathbf{\Omega}}  (\partial_t \nabla \psi) \cdot \nabla (\psi - \psi^D) \,d\mathbf{x}
                ~
                +  \int_{\mathbf{\Omega}_{\text{intr}}} \mathcal{F}_a^{-1}(n_a)\partial_t n_a\,d\mathbf{x}\\
                ~
                &+\delta\sum_{\alpha \in \{n, p\}} \int_\mathbf{\Omega} \left( \mathcal{F}_\alpha^{-1}(n_\alpha ) -\mathcal{F}_\alpha^{-1}(n_\alpha^D )\right)\partial_t n_\alpha \,d\mathbf{x}.
            \end{aligned}
        \end{equation}
        By integrating the first term by parts and using the Poisson equation \eqref{eq:model-poisson} one obtains
        \[
        \lambda^2\int_{\mathbf{\Omega}}  (\partial_t \nabla \psi) \cdot \nabla (\psi - \psi^D) \,d\mathbf{x}\ =\ \delta\int_{\mathbf{\Omega}}  (z_n\partial_t n_n + z_p\partial_t n_p) (\psi - \psi^D) \,d\mathbf{x}
        +\int_{\mathbf{\Omega}_\text{intr}}  z_a\partial_tn_a (\psi - \psi^D) \,d\mathbf{x},
        \]
        where all the boundary terms cancel thanks to the boundary conditions \eqref{eq:outer-bc} and \eqref{eq:inner-bc-psi}. Plugging this back into \eqref{eq:Dirichlet-dt_E} and using the state equation \eqref{eq:state-eq-dimless}, we have
        \[
        \begin{aligned}
            \frac{\text{d}}{\text{d}t}\mathbb{E}(t) =   \int_{\mathbf{\Omega}_{\text{intr}}} z_a \left( \varphi_a - \psi^D \right)\partial_t n_a\,d\mathbf{x}\!\!
            ~
            +\delta\!\!\sum_{\alpha \in \{n, p\}} \int_\mathbf{\Omega} z_\alpha \left( \varphi_\alpha - \varphi^D \right)\partial_t n_\alpha \,d\mathbf{x}.
        \end{aligned}
        \]
        Next, we insert the balance equations \eqref{eq:model-dimless} and the definition of the current densities \eqref{eq:cont-flux-dimless}
        \begin{align*}
            \frac{\text{d}}{\text{d}t}\mathbb{E}(t) =&
            -  \int_{\mathbf{\Omega}_{\text{intr}}} \nabla \cdot \mathbf{j}_{a}  \left( \varphi_a - \psi^D \right)\,d\mathbf{x}
            -\frac{\delta}{\nu}\sum_{\alpha \in \{n, p\}} \int_\mathbf{\Omega} \nabla \cdot \mathbf{j}_\alpha \left( \varphi_\alpha - \varphi^D \right) \,d\mathbf{x}\\&
            +\frac{\delta}{\nu}\sum_{\alpha \in \{n, p\}} \int_\mathbf{\Omega} z_\alpha \left(\gamma G - R \right) \left( \varphi_\alpha - \varphi^D \right) \,d\mathbf{x}\\
            ~
            =& -  \int_{\mathbf{\Omega}_{\text{intr}}} z_a^2n_a\nabla\varphi_a \cdot \nabla \left( \varphi_a - \psi^D \right)\,d\mathbf{x}
            -\frac{\delta}{\nu}\sum_{\alpha \in \{n, p\}} \int_\mathbf{\Omega} n_\alpha\nabla\varphi_\alpha \cdot \nabla\left( \varphi_\alpha - \varphi^D \right) \,d\mathbf{x}\\&
            +\frac{\delta}{\nu}\int_\mathbf{\Omega}  \left(\gamma G - R \right) (\varphi_p-\varphi_n) \,d\mathbf{x},
        \end{align*}
        where we used $z_n = -1=- z_p$ and  integrated by parts with boundary terms vanishing again thanks to \eqref{eq:Neumann-cond}, \eqref{eq:outer-bc_anion}, \eqref{eq:inner-bc-n} and \eqref{eq:inner-bc-p}. By expanding the first terms and using Young's inequality we get
        \begin{equation}\label{eq:entropy-dissip-with-remainders}
            \frac{\text{d}}{\text{d}t}\mathbb{E}(t) + \mathbb{D}(t) \leq \frac{\delta\gamma}{\nu} \int_\mathbf{\Omega}  G   \left( \varphi_p - \varphi_n \right) \,d\mathbf{x}
            + \frac{z_a^2}{2 } \int_{\mathbf{\Omega}_\text{intr}} n_a  | \nabla \psi^D|^2  \,d\mathbf{x}
            + \frac{\delta}{2\nu} \sum_{\alpha \in \{n, p\}}  \int_\mathbf{\Omega} n_\alpha  | \nabla \varphi^D|^2  \,d\mathbf{x}.
        \end{equation}
        It remains to bound the terms of the right-hand side. For the first term on the right hand side of \eqref{eq:entropy-dissip-with-remainders} we use the state equation \eqref{eq:state-eq-dimless} and \Cref{lemma:relation-inverse-primitive} to find for some $\varepsilon >0$
        \begin{align*}
            \frac{\delta\gamma}{\nu}   \int_\mathbf{\Omega}  G   \left( \varphi_p - \varphi_n \right) \,d\mathbf{x} &=  \frac{\delta\gamma}{\nu}    \sum_{\alpha \in \{ n, p\}} \int_\mathbf{\Omega} G \mathcal{F}^{-1}_\alpha(n_\alpha)\,d\mathbf{x}
            ~
            \leq  \frac{\delta\gamma}{\nu}    || G ||_{\infty } \!\!\! \sum_{\alpha \in \{ n, p\}} \int_\mathbf{\Omega}  \max(\mathcal{F}^{-1}_\alpha(n_\alpha),0) \,d\mathbf{x} \\
            ~
            ~
            &\leq  \frac{\delta\gamma}{\nu}     || G ||_{\infty } \sum_{\alpha \in \{ n, p\}}  \left(c_{y_{\alpha}^D, \varepsilon} | \mathbf{\Omega}| + \varepsilon\int_\mathbf{\Omega} H_\alpha \left(n_\alpha, n_\alpha^D \right) \,d\mathbf{x} \right),
        \end{align*}
        where  $H_\alpha$ is defined in \eqref{eq:H-function} and  $c_{y_\alpha^D, \varepsilon}$ is the corresponding constant introduced in \Cref{lemma:relation-inverse-primitive}, where for any species $\alpha\in\{n,p\}$ we introduce
        \[
         y_{\alpha}^D = {\mathcal F}_{\alpha}(\Vert \varphi^D\Vert_{\infty}+ \Vert \psi^D\Vert_{\infty}).
        \]
        With help of \Cref{lemma:bound-primitive-argument} the second remainder term of \eqref{eq:entropy-dissip-with-remainders} is estimated by
        \begin{align*}
            &\frac{z_a^2}{2}\int_{\mathbf{\Omega}_\text{intr}} n_a  | \nabla \psi^D|^2  \,d\mathbf{x}
            + \frac{\delta}{2 \nu}\sum_{\alpha \in \{n, p\}}  \int_\mathbf{\Omega} n_\alpha  | \nabla \varphi^D|^2  \,d\mathbf{x}\\
            ~
            \leq&
            ~
            \frac{z_a^2}{2} || \nabla \psi^D ||_{\infty}^2\int_{\mathbf{\Omega}_\text{intr}} n_a \,d\mathbf{x} + \frac{\delta}{2 \nu}|| \nabla \varphi^D ||_{\infty}^2\sum_{\alpha \in \{n, p\}}  \int_{\mathbf{\Omega}} n_\alpha  \,d\mathbf{x}\\
            ~
            \leq&
            ~
            \max \left\{\frac{z_a^2}{2}  || \nabla \psi^D ||_{\infty}^2, \frac{\delta}{2 \nu}|| \nabla \varphi^D ||_{\infty}^2\right\} \left((c_{y_{n}^D,\varepsilon} + c_{y_{p}^D,\varepsilon} +c_{\varepsilon})|\mathbf{\Omega}| + 3\varepsilon \mathbb{E}   \right)
        \end{align*}
        since the first term in \eqref{eq:continuous-entropy} is non-negative.
        Plugging these estimates back into \eqref{eq:entropy-dissip-with-remainders} proves the entropy-dissipation estimate (up to a redefinition of $\varepsilon$).
    \end{proof}
    Using Grönwall's lemma, an immediate consequence of Theorem~\ref{thm:cont-E-D} is that, as functions of time, the entropy $t\mapsto\mathbb{E}(t)$ and the dissipation $t\mapsto\mathbb{D}(t)$ are respectively locally bounded and locally integrable. More precisely, one has the following result.
    \begin{corollary}\label{cor:bounds-E-D}
        For any $\varepsilon>0$, one has
        \[
        \mathbb{E}(t) + \int_0^t\mathbb{D}(s)\mathrm{d}s\leq e^{\varepsilon t}\mathbb{E}(0) + \frac{c_{\varepsilon, \mathbf{\Omega}}}{\varepsilon}(e^{\varepsilon t}-1)\,,\quad t\geq0.
        \]
    \end{corollary}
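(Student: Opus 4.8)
The plan is to obtain both estimates at once from the differential inequality of Theorem~\ref{thm:cont-E-D} by a Grönwall argument with an integrating factor. Fix $\varepsilon>0$ and abbreviate $c=c_{\varepsilon,\mathbf{\Omega}}$. Starting from
\[
\frac{\mathrm{d}}{\mathrm{d}t}\mathbb{E}(t) + \mathbb{D}(t) \leq c + \varepsilon\,\mathbb{E}(t),\qquad t\geq 0,
\]
I would multiply through by $e^{-\varepsilon t}$ and recognize the first two terms on the left as the derivative of $e^{-\varepsilon t}\mathbb{E}(t)$, obtaining
\[
\frac{\mathrm{d}}{\mathrm{d}t}\bigl(e^{-\varepsilon t}\mathbb{E}(t)\bigr) + e^{-\varepsilon t}\mathbb{D}(t) \leq c\,e^{-\varepsilon t}.
\]

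Next I would integrate this over $[0,t]$. Since $\int_0^t c\,e^{-\varepsilon s}\,\mathrm{d}s = \frac{c}{\varepsilon}\bigl(1-e^{-\varepsilon t}\bigr)$, this yields
\[
e^{-\varepsilon t}\mathbb{E}(t) + \int_0^t e^{-\varepsilon s}\mathbb{D}(s)\,\mathrm{d}s \leq \mathbb{E}(0) + \frac{c}{\varepsilon}\bigl(1-e^{-\varepsilon t}\bigr),
\]
and multiplying by $e^{\varepsilon t}$ gives
\[
\mathbb{E}(t) + e^{\varepsilon t}\int_0^t e^{-\varepsilon s}\mathbb{D}(s)\,\mathrm{d}s \leq e^{\varepsilon t}\mathbb{E}(0) + \frac{c}{\varepsilon}\bigl(e^{\varepsilon t}-1\bigr).
\]
It remains only to replace the weighted dissipation integral by the plain one. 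Here I would use that $\mathbb{D}(s)\geq0$ — which is exactly the reason the dissipation in \eqref{eq:continuous-dissipation} was arranged to be non-negative — together with the elementary fact that $e^{\varepsilon t}e^{-\varepsilon s}\geq1$ for all $s\in[0,t]$, so that $e^{\varepsilon t}\int_0^t e^{-\varepsilon s}\mathbb{D}(s)\,\mathrm{d}s \geq \int_0^t\mathbb{D}(s)\,\mathrm{d}s$. Substituting this lower bound produces the claimed inequality.

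The argument is essentially routine, so there is no serious obstacle; the one point I would be careful about is keeping the dissipation term with its exponential weight until the final step rather than discarding it earlier, since dropping it too soon recovers only the bound on $\mathbb{E}(t)$ alone, which is the weaker statement mentioned just before the corollary. The only structural hypothesis being used is that $\mathbb{E}$ is absolutely continuous in time and $\mathbb{D}$ locally integrable, so that the above integrations are legitimate; this is guaranteed by the smoothness of the solution assumed in Theorem~\ref{thm:cont-E-D}.
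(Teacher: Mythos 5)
Your proposal is correct: the integrating-factor Gr\"onwall argument, the integration over $[0,t]$, and the final step using $\mathbb{D}\geq 0$ together with $e^{\varepsilon(t-s)}\geq 1$ to pass from the weighted to the plain dissipation integral all check out, and they yield exactly the stated bound. The paper gives no written proof here --- it simply invokes Gr\"onwall's lemma as an immediate consequence of Theorem~\ref{thm:cont-E-D} --- and your argument is precisely the routine computation being referenced.
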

   
    \section{Discrete version of charge transport model}
    \label{sec:FVM}
    In this section, we introduce our numerical scheme for \eqref{eq:model-dimless}-\eqref{eq:cont-flux-dimless}. It is a finite volume scheme with a two-point flux approximation of the fluxes and a backward Euler scheme in time. As in the continuous setting, we will show that an entropy-dissipation relation also holds at the discrete level, ensuring stability and preservation of the physical structure of the model.

    \subsection{Definition of discretization mesh}
    First, we introduce the time discretization and the spatial mesh of the domain $\mathbf{\Omega}$. The mesh, given by the triplet $\left( \mathcal{T}, \mathcal{E}, \lbrace\mathbf{x}_K\rbrace_{K \in \mathcal{T}}\right)$, will be assumed to be admissible in the sense of \cite{Eymard2000}.
    Let $\mathcal{T}$ denote a family of non-empty, convex, open and polygonal \textit{control volumes} $K \in \mathcal{T}$, whose Lebesgue measure is denoted by $m_K$. For $K, L \in \mathcal{T}$ with $K \neq L$ we assume that the intersection is empty. Also, we infer that the union of the closure of all control volumes is equal to the closure of the domain, i.e.\
    \[
    \mathbf{\overbar{\Omega}} = \bigcup_{K \in \mathcal{T}} \overbar{K}.
    \]
    The subset of cells contained in the intrisic domain is denoted by $\mathcal{T}_\text{intr}\subset\mathcal{T}$. It is assumed that the closure of the control volumes in the intrinsic domain form a partition of $\mathbf{\Omega}_\text{intr}$, namely
    \[
    \mathbf{\overbar{\Omega}_\text{intr}} = \bigcup_{K \in \mathcal{T}_\text{intr}} \overbar{K}.
    \]
    Further, we call $\mathcal{E}$ a \textit{family of faces}, where $\sigma \in \mathcal{E}$ is a closed subset of $\overbar{\mathbf{\Omega}}$ contained in a hyperplane of $\mathbb{R}^d$. Each $\sigma$ has a strictly positive $(d-1)$-dimensional measure, denoted by $m_\sigma $.
    We use the abbreviation $\sigma = K | L = \partial K \cap \partial L$ for the intersection between two distinct control volumes which is either empty or reduces to a face contained in $\mathcal{E}$.
    Also, for any $K \in \mathcal{T}$ we assume that there exists a subset $ \mathcal{E}_K$ of $\mathcal{E}$ such that the boundary of a control volume can be described by $\partial K = \bigcup_{\sigma \in \mathcal{E}_K} \sigma$ and, consequently, it follows that
    $\mathcal{E} = \bigcup_{K \in \mathcal{T}} \mathcal{E}_K$.
    The set of faces contained in the intrinsic domain are denoted by
    \[
    \mathcal{E}_\text{intr} = \{\sigma\in\mathcal{E}\ \text{s.t.}\ \sigma\subset\mathbf{\overline{\Omega}}_\text{intr}\}.
    \]
   Now, we distinguish the faces that are on the boundary of $\mathbf{\Omega}$ 
   by the notations
    \[
    \mathcal{E}^D = \{\sigma\in\mathcal{E}\ \text{s.t.}\ \sigma\subset \mathbf{\Gamma}^D\}, \quad \mathcal{E}^N = \{\sigma\in\mathcal{E}\ \text{s.t.}\ \sigma\subset \mathbf{\Gamma}^N\}.
    \]
    These sets form partitions of $\mathbf{\Gamma}^D$ and $\mathbf{\Gamma}^N$, respectively. We also introduce the set of interior faces in the whole and the intrinsic domain, respectively
    \[
    \mathcal{E}^\text{int} = \{\sigma\in\mathcal{E}\ \text{s.t.}\ \sigma\not\subset \partial\mathbf{\Omega}\}, \quad \mathcal{E}_{\text{intr}}^\text{int} = \{\sigma\in\mathcal{E}_\text{intr}\ \text{s.t.}\ \sigma\not\subset \partial\mathbf{\Omega}_\text{intr}\}.
    \]
    To each control volume $K \in \mathcal{T}$ we assign a \textit{cell center} $\mathbf{x}_K \in K$ and we assume that the family of cell centers $(x_K)_{K\in{\mathcal T}}$ satisfies the \textit{orthogonality condition}: If $K$ and  $L$ share a face $\sigma =K | L$, then the vector \[\overline{\mathbf{x}_K \mathbf{x}_L}\text{ is orthogonal to }\sigma = K | L.\]
    For each edge $\sigma\in{\mathcal E}$, we define $d_\sigma$ as the Euclidean distance between $\mathbf{x}_K$ and $\mathbf{x}_L$, if $\sigma= K|L$ or between $\mathbf{x}_K$ and the affine hyperplane spanned by $\sigma$, if $\sigma\subset\partial\mathbf{\Omega}$. Lastly, we introduce the transmissibility of the edge $\sigma$:
    \[
    \tau_\sigma = \frac{m_\sigma}{d_\sigma}.
    \]
    The notations are illustrated in Figure~\ref{fig:control-vol}.
    \begin{figure}[H]
        \centering
        \begin{subfigure}[b]{0.43\textwidth}
            \includegraphics[width=\textwidth]{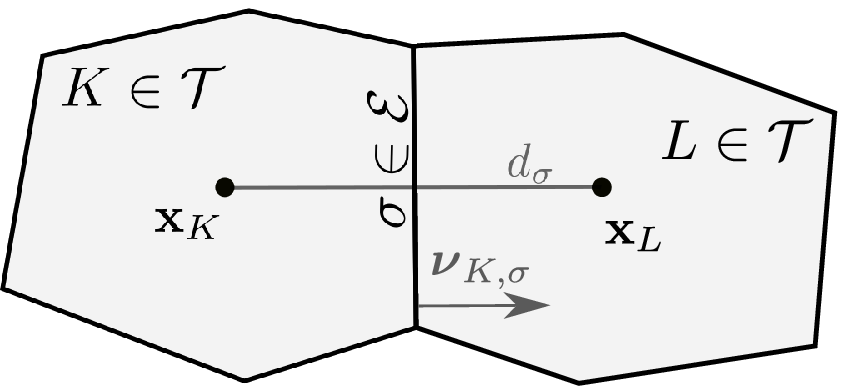}
            \caption{} 	\label{fig:inner-control}
        \end{subfigure}%
        \hspace{0.8cm}
        \begin{subfigure}[b]{0.30\textwidth}
            \includegraphics[width=\textwidth]{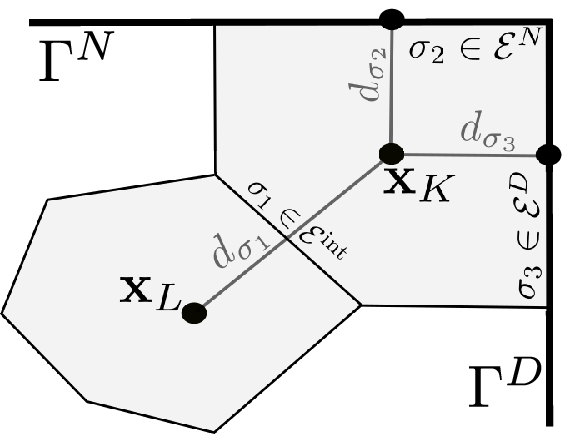}
            \caption{} \label{fig:outer-control}
        \end{subfigure}%
        \caption{Neighboring control volumes (a) in the interior of the domain and (b) near outer boundaries $\mathbf{\Gamma}^D$ and $\mathbf{\Gamma}^N$.}
        \label{fig:control-vol}
    \end{figure}
    We assume that the mesh is regular in the following sense. There is a constant $\xi >0$, which does not depend on the size of the mesh $h_\mathcal{T} = \max_K(\mathrm{diam}(K))$ such that
    \[
    \forall K\in\mathcal{T},\ \forall \sigma\in\mathcal{E}_K,\ \left\{
    \begin{array}{l}
        d_{\sigma} \geq \xi\, \mathrm{diam}(K)\,,\\
        m_K\geq \xi\,\sum_{\sigma\in\mathcal{E}_K}m_\sigma d_\sigma\,.
    \end{array}
    \right.
    \]
    The regularity assumptions have to be understood as an asymptotic property as $h_\mathcal{T}\to 0$ which are always satisifed on a given mesh due to the finite number of cells. We remark that Voronoi meshes satisfy all the assumptions stated in this section.

    For the time discretization we decompose the interval $[0, t_F]$, for a given end time $t_F>0$ into a finite and increasing number of time steps
    $
    0 = t^1 < \ldots < t^M = t_F
    $
    with a step-size $\tau^m = t^{m} - t^{m-1}$ at time step $m = 2,\dots, M$. We finally introduce $\Delta t = \max_{m = 2,\dots, M}\tau^m$.
    \subsection{Finite volume discretization}
    Now, we introduce the finite volume discretization for \eqref{eq:model-dimless}--\eqref{eq:cont-flux-dimless}. In what follows, the quantity $u_K^{m}$ represents an approximation of the mean value of $u(\mathbf{x}, t)$ on the cell $K$ at time $t^{m}$, where $u$ is one of the potentials $\varphi_n, \varphi_p$, $\psi$. In this case, we define ${\mathbf u}^m= (u_K^{m})_{K\in{\mathcal T}}$. For $\varphi_a$ the approximation is only given for $K\in\mathcal{T}_\text{intr}$, so that we define ${{\boldsymbol \varphi}}_a^m= (\varphi_{a,K}^{m})_{K\in{\mathcal T}_\textrm{intr}}$.
    The discretizations of the doping profile $C$, the photogeneration rate $G$ and the boundary data $\varphi^D$, $\psi^D$ are given by
    \[
    \chi_K = \frac{1}{m_K}\int_K \chi(\mathbf{x}) d \mathbf{x}, \quad K\in\mathcal{T},\ \chi = C,\, G,\,\psi^D\text{ or }\varphi^D,
    \]
    and
    \[
    \chi_\sigma = \frac{1}{m_\sigma}\int_\sigma \chi(\gamma) d \gamma, \quad \sigma\in\mathcal{E}^D,\  \chi = \psi^D,\, \varphi^D.
    \]
    We discretize in the same way the initial conditions $\varphi_n^0$, $\varphi_p^0$,  $\varphi_a^0$, which lead to the corresponding vectors ${{\boldsymbol \varphi}}_n^0$, ${{\boldsymbol \varphi}}_p^0$ and ${{\boldsymbol \varphi}}_a^0$.
    The finite volume scheme is formulated as follows. First, the discrete mass balance equations for the three charge carriers are given by
    \begin{subequations}\label{eq:discrete-mass-balance}
        \begin{align}
            \nu z_n m_K \frac{n_{n,K}^{m} - n_{n,K}^{m-1} }{\tau^m} + \sum_{\sigma \in \mathcal{E}_K} J_{n,K, \sigma}^m &=  z_n m_K \Bigl(\gamma G_K-R(n_{n,K}^m, n_{p,K}^m) \Bigr),&& K\in\mathcal{T},\ m\in\mathbb{N}, \label{eq:discrete-mass-balance-elec}\\
            \nu z_p m_K \frac{n_{p,K}^{m} - n_{p,K}^{m-1} }{\tau^m} + \sum_{\sigma \in \mathcal{E}_K} J_{p,K, \sigma}^m &=  z_p m_K \Bigl(\gamma G_K-R(n_{n,K}^m, n_{p,K}^m)\Bigr),&& K\in\mathcal{T},\ m\in\mathbb{N}, \label{eq:discrete-mass-balance-holes}\\
            z_a m_K \frac{n_{a,K}^{m} - n_{a,K}^{m-1} }{\tau^m} + \sum_{\sigma \in \mathcal{E}_K} J_{a,K, \sigma}^m &=  0,&& K\in\mathcal{T}_\text{intr},\ m\in\mathbb{N}. \label{eq:discrete-mass-balance-anions}
        \end{align}
    \end{subequations}
    They are coupled via the discrete Poisson equation
    \begin{equation}\label{eq:model-poisson-discrete}
        - \lambda^2\sum_{\sigma \in \mathcal{E}_K}\tau_\sigma D_{K,\sigma}\boldsymbol{\psi}^m =\left\{{\begin{aligned}
                &\delta m_K(z_nn_{n,K}^{m} + z_pn_{p,K}^{m} + C_K),&&K\in\mathcal{T}\setminus\mathcal{T}_\text{intr},\ m\in\mathbb{N},\\
                &m_Kz_an_{a,K}^{m} + \delta m_K (z_nn_{n,K}^{m} + z_pn_{p,K}^{m}  + C_K ),&&K\in\mathcal{T}_\text{intr},\ m\in\mathbb{N}.
            \end{aligned}}\right.
    \end{equation}
    In the previous equation, the notation $D_{K,\sigma}$ denotes the finite difference operator acting on vectors of unknowns ${\boldsymbol u} = (u_K)_K$ and is given by
    \begin{equation}\label{eq:difference-operator}
        D_{K,\sigma}{\boldsymbol u} = \left\{
        \begin{array}{ll}
            u_L - u_K, &\text{if }\sigma = K|L,\\
            u_\sigma^D - u_K, &\text{if }\sigma\in\mathcal{E}^D,\\
            0, &\text{otherwise.}
        \end{array}
        \right.
    \end{equation}
    The discrete densities are given by the state equation \eqref{eq:state-eq-dimless} inside the domain and at the Dirichlet boundary, namely
    \begin{subequations}\label{eq:discrete-state-eq}
        \begin{align}
            {\boldsymbol n}_{\alpha}^m &= \mathcal{F}_\alpha \left(z_\alpha ({\boldsymbol \varphi}_{\alpha}^m - {\boldsymbol \psi}^m)\right), && \alpha \in \{ n, p, a\},\ m\in\mathbb{N},\label{eq:discrete-state-eq-m}\\
            {\boldsymbol n}_{\alpha}^D &= \mathcal{F}_\alpha \left(z_\alpha ({\boldsymbol \varphi}_{\alpha}^D - {\boldsymbol \psi}^D)\right), && \alpha \in \{ n, p\},\label{eq:discrete-state-eq-Dirichlet-inside}\\
            n_{\alpha, \sigma}^D &= \mathcal{F}_\alpha \left(z_\alpha (\varphi_{\sigma}^D - \psi^D_\sigma)\right), && \alpha \in \{ n, p\},\ \sigma \in \mathcal{E}^D,\label{eq:discrete-state-eq-Dirichlet}
        \end{align}
    \end{subequations}
where the statistics function is applied pointwise to the input vector. Let us remark that the discrete values of the boundary densities, defined by \eqref{eq:discrete-state-eq-Dirichlet-inside} and \eqref{eq:discrete-state-eq-Dirichlet}, are bounded but that the upper bound may differ from $\Vert n_\alpha^D\Vert_{\infty}$. Indeed, for $\alpha\in \{n,p\}$, we have
\begin{equation}\label{def:yalphaD}
\max\Bigl(\max_{K\in{\mathcal T}} n_{\alpha,K}^D, \max_{\sigma\in{\mathcal E}^D} n_{\alpha,\sigma}^D
\Bigr)\leq {\mathcal F}_{\alpha}(\Vert \varphi^D\Vert_{\infty}+ \Vert \psi^D\Vert_{\infty}) = y_{\alpha}^D.
\end{equation}

We use the excess chemical potential scheme (frequently called Sedan scheme) as TPFA scheme for $J_{\alpha,K, \sigma}^m$. The earliest reference, we could find for this thermodynamically consistent flux discretization scheme is \cite{Yu1988}. The definition of the numerical flux is based on the following reformulation of the currents:
$$
{\mathbf j}_\alpha=-z_{\alpha}\left(\nabla n_{\alpha}+n_{\alpha}\nabla(z_\alpha\varphi_\alpha-\log n_\alpha)\right)
$$
and on the approximation of convection-diffusion fluxes by Scharfetter-Gummel numerical fluxes, see \cite{Cances2020, SG69}. For the electrons and holes, it reads
    \begin{equation}\label{eq:discrete-flux-elec-holes}
        \forall\alpha\in\{n,p\},\quad J_{\alpha,K, \sigma}^m = \left\{
        \begin{array}{ll}
            - z_\alpha \tau_\sigma \Bigl( B\left(- Q_{\alpha, K, \sigma}^m \right) n_{\alpha, L}^m - B\left( Q_{\alpha, K, \sigma}^m \right) n_{\alpha, K}^m\Bigr), &\text{if }\sigma = K|L,\\
            - z_\alpha \tau_\sigma \Bigl( B\left(- Q_{\alpha, K, \sigma}^m \right) n_{\alpha, \sigma}^D - B\left( Q_{\alpha, K, \sigma}^m \right) n_{\alpha, K}^m\Bigr), &\text{if }\sigma\in\mathcal{E}^D,\\
            0, &\text{otherwise}.
        \end{array}
        \right.
    \end{equation}
    For the anion vacancies it is given by
    \begin{equation}\label{eq:discrete-flux-anions}
        J_{a,K, \sigma}^m = \left\{
        \begin{array}{ll}
            - z_a \tau_\sigma \Bigl( B\left(- Q_{a, K, \sigma}^m \right) n_{a, L}^m - B\left( Q_{a, K, \sigma}^m \right) n_{a, K}^m\Bigr), &\text{if }\sigma= K|L\in\mathcal{E}^\text{int}_\text{intr},\\
            0, &\text{otherwise}.
        \end{array}
        \right.
    \end{equation}
    The quantity $Q_{\alpha, K, \sigma}^m$ is defined as
    \begin{equation}\label{eq:inside-Bernoulli}
        Q_{\alpha, K, \sigma}^m = D_{K,\sigma} \left( z_\alpha{\boldsymbol \varphi}_{\alpha}^m - \log {\boldsymbol n}_{\alpha}^m \right)
    \end{equation}
    with $K\in\mathcal{T}$, $\sigma\in\mathcal{E}_K\cap(\mathcal{E}^\text{int}\cup\mathcal{E}^D)$ for electrons and holes ($\alpha\in\{n,p\}$) and $K\in\mathcal{T}_\text{intr}$, $\sigma\in\mathcal{E}_K\cap\mathcal{E}^\text{int}_\text{intr}$ in the case of anion vacancies ($\alpha = a$). In the previous formula, the logarithm is applied componentwise.
    Lastly, the function $B$ denotes the Bernoulli function
    \begin{equation} \label{eq:bernoulli}
        B(x) = \frac{x}{\exp(x) -1},\; \text{for}\; x \neq 0 \quad \text{and} \quad B(0)=1.
    \end{equation}
     Note that the fluxes are locally conservative in the sense that for $\sigma =K|L$
    \begin{equation}\label{eq:conservativity}
        0=J_{\alpha,K, \sigma}^m + J_{\alpha,L, \sigma}^m = Q_{\alpha, K, \sigma}^m + Q_{\alpha, L, \sigma}^m = D_{K,\sigma}{\boldsymbol \psi}^m + D_{L,\sigma}{\boldsymbol \psi}^m = D_{K,\sigma}{\boldsymbol \varphi}_{\alpha}^m + D_{L,\sigma}{\boldsymbol \varphi}_{\alpha}^m.
    \end{equation}
    Since the fluxes $J_{\alpha,K, \sigma}^m$ and $J_{\alpha,L, \sigma}^m $  agree up to sign for any interior edge, we introduce the notation
    $$
    D_{\sigma}{\boldsymbol u} = \vert  D_{K,\sigma}{\boldsymbol u}\vert  \textrm { for } \ \sigma \in\E_K, \textrm{ for } {\boldsymbol u}={\boldsymbol\psi}^m,{\boldsymbol\varphi}_\alpha^m, \ \alpha\in\{n,p,a\}.
    $$
    \begin{remark}[Boundary conditions] Observe that all the boundary conditions have been considered in the definition of the scheme. The external boundary conditions \eqref{eq:outer-bc} for the electric potential are handled in the definition of \eqref{eq:difference-operator}. For the quasi Fermi potentials of electrons and holes external boundary conditions are included in the definition of \eqref{eq:discrete-state-eq-Dirichlet} and \eqref{eq:discrete-flux-elec-holes} as well as \eqref{eq:difference-operator} and \eqref{eq:inside-Bernoulli}. The Neumann boundary conditions for anion vacancies \eqref{eq:outer-bc_anion} are included in the definition of \eqref{eq:discrete-flux-anions}. Finally, observe that the continuity of fluxes of electrons, holes and electric potential through the interfaces $\mathbf{\Sigma}_\text{ETL}$ and $\mathbf{\Sigma}_\text{HTL}$ is automatically ensured thanks to \eqref{eq:conservativity}.
    \end{remark}
    \begin{remark}[Mobilities, permittivity and band-edge energies] As explained at the beginning of Section~\ref{sec:nondimens-model}, we made several simplifications concerning the mobilities, permittivity and band-edge energy in order to lighten the presentation. The generalization of the present scheme to take into account non-constant mobilities $\mu_n, \mu_p$ and permittivity $\varepsilon_s$ amounts to introducing a consistent prefactor depending on the edge $\sigma$ in formula \eqref{eq:discrete-flux-elec-holes} and in the sum of the left hand side of \eqref{eq:model-poisson-discrete} respectively. To take into account non-zero band-edge energies $E_\alpha$ one needs to add the corresponding term to the quasi Fermi potential $\boldsymbol{\varphi}_\alpha$ in \eqref{eq:inside-Bernoulli}.
    \end{remark}
    At first glance, it might not be obvious why the fluxes \eqref{eq:discrete-flux-elec-holes} and \eqref{eq:discrete-flux-anions} are discrete versions of \eqref{eq:cont-flux-dimless}. It turns out that one can define
    \begin{equation}\label{eq:interface-densities_np}
        \overline{n}^{m}_{\alpha,\sigma} :=\left\{\begin{array}{ll} \displaystyle \frac{ B(- Q_{\alpha, K, \sigma}^m ) n_{\alpha,L}^m - B( Q_{\alpha, K, \sigma}^m ) n_{\alpha, K}^m  }{z_\alpha (\varphi_{\alpha,L}^{m}-\varphi_{\alpha,K}^{m})}, & \sigma = K|L,\  \alpha\in\{n,p\},   \\[1em]
            \displaystyle \frac{ B(- Q_{\alpha, K, \sigma}^m ) n_{\alpha, \sigma}^D - B( Q_{\alpha, K, \sigma}^m ) n_{\alpha, K}^m  }{z_\alpha (\varphi_{\sigma}^{D}-\varphi_{\alpha,K}^{m})}, & \sigma \in\mathcal{E}^D\cap\mathcal{E}_K, \,\  \alpha\in\{n,p\},
        \end{array}
        \right.
    \end{equation}
    and
      \begin{equation}\label{eq:interface-densities_a}
        \overline{n}^{m}_{a,\sigma} :=\displaystyle \frac{ B(- Q_{a, K, \sigma}^m ) n_{a,L}^m - B( Q_{a, K, \sigma}^m ) n_{a, K}^m  }{z_\alpha (\varphi_{a,L}^{m}-\varphi_{a,K}^{m})},\ \ \sigma  = K|L \in {\mathcal E}_{\textrm{intr}}^{\textrm{int}},
    \end{equation}
    so that the fluxes can be rewritten to
    \begin{equation}\label{eq:reformulated-fluxes}
        {J}_{\alpha, K, \sigma}^m = -\tau_\sigma z_\alpha^2\overline{n}^{m}_{\alpha,\sigma}\,D_{K, \sigma} \boldsymbol{\varphi}_\alpha^m, \, \quad \text{for all} \;  \alpha\in\{n,p,a\}.
    \end{equation}
    Observe that $\overline{n}^{m}_{\alpha,\sigma}$ is well-defined in the sense that thanks to \eqref{eq:conservativity} it depends only on the edge (and not nodal values) as well as the fact that a boundary edge has only one associated control volume. The reformulation of the fluxes \eqref{eq:reformulated-fluxes} now is closer to \eqref{eq:cont-flux-dimless} but the analogy would not be complete, if $\overline{n}^{m}_{\alpha,\sigma}$ is not consistent with the density at the interface $\sigma$.
    This is actually the case as the following lemma shows. It is adapted from \cite[Lemma 3.1]{Cances2020}.
    \begin{lemma} \label{lemma:bound-mean-value}
        The interface value $\overline{n}^{m}_{\alpha,\sigma}$ defined by \eqref{eq:interface-densities_np} is a convex combination of $n^{m}_{\alpha,K}$ and $n^{m}_{\alpha,L}$ (\emph{resp.} $n^{D}_{\alpha,\sigma}$), if $\sigma = K|L$ (\emph{resp.} $\sigma \in\mathcal{E}^D$). In particular it is framed between the minimum and maximum of the two values. The same result holds for $\overline{n}^{m}_{a,\sigma}$ defined by \eqref{eq:interface-densities_a} for $\sigma =K|L\in {\mathcal E}_{\textrm{intr}}^{\textrm{int}} $.
    \end{lemma}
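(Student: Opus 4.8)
The plan is to reduce the statement to two elementary facts about the Bernoulli function $B$ from \eqref{eq:bernoulli}: the algebraic identity $B(-x) = e^{x}B(x)$ and the strict monotonicity of $B$ (it is strictly decreasing on $\mathbb{R}$), both classical. I will carry out the interior case $\sigma = K|L$ for $\alpha\in\{n,p\}$; the Dirichlet case $\sigma\in\mathcal{E}^D$ is word for word the same with $n^m_{\alpha,L}$ and $\varphi^m_{\alpha,L}$ replaced by $n^D_{\alpha,\sigma}$ and $\varphi^D_\sigma$, and the anion case \eqref{eq:interface-densities_a} is identical. To lighten notation write $a = n^m_{\alpha,K}>0$, $b = n^m_{\alpha,L}>0$ (positivity coming from \eqref{eq:discrete-state-eq-m} together with \eqref{hyp:statistics-n-p}), $Q = Q^m_{\alpha,K,\sigma}$, and $\Delta = z_\alpha(\varphi^m_{\alpha,L}-\varphi^m_{\alpha,K})$. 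By the definition \eqref{eq:inside-Bernoulli} of $Q$ one has $\Delta = Q + \log(b/a)$, and by \eqref{eq:interface-densities_np} we have $\overline{n}^m_{\alpha,\sigma} = (B(-Q)\,b - B(Q)\,a)/\Delta$.

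The first step, which is the crux, is to rewrite $\overline{n}^m_{\alpha,\sigma}$ in a form where monotonicity of $B$ becomes directly usable. Using $B(-Q) = e^{Q}B(Q)$ and $e^{Q} = e^{\Delta}\,a/b$ (this is just $\Delta = Q+\log(b/a)$), one gets $B(-Q)b - B(Q)a = aB(Q)(e^{\Delta}-1)$, and since $(e^{x}-1)/x = 1/B(x)$ this yields
\[
\overline{n}^m_{\alpha,\sigma} \;=\; a\,\frac{B(Q)}{B(\Delta)},
\]
with the usual convention $B(0)=1$ when $\Delta = 0$. In passing this confirms that $\overline{n}^m_{\alpha,\sigma}$ depends only on the face: swapping $K\leftrightarrow L$ sends $Q\mapsto -Q$ and $\Delta\mapsto -\Delta$ by \eqref{eq:conservativity} and leaves the expression invariant. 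Using this symmetry I may assume without loss of generality that $a\le b$, and set $t := \log(b/a)\ge 0$, so that $\Delta = Q+t$ and $b = a e^{t}$.

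It then suffices to prove $a\le\overline{n}^m_{\alpha,\sigma}\le b$; the convex-combination claim (and hence the framing between the minimum and maximum) is then the obvious linear interpolation. The lower bound is immediate: $B$ decreasing and $\Delta = Q+t\ge Q$ give $B(\Delta)\le B(Q)$, hence $\overline{n}^m_{\alpha,\sigma}\ge a$. For the upper bound I need $B(Q)\le e^{t}B(Q+t)$; rewriting $e^{t}B(Q+t) = e^{-Q}\bigl(e^{Q+t}B(Q+t)\bigr) = e^{-Q}B(-Q-t)$ via the identity $B(-x)=e^{x}B(x)$, this is equivalent to $e^{Q}B(Q)\le B(-Q-t)$, i.e. to $B(-Q)\le B(-Q-t)$, which holds since $B$ is decreasing and $-Q-t\le -Q$. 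Hence $\overline{n}^m_{\alpha,\sigma}\le ae^{t}=b$, finishing the argument.

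I do not expect a genuine obstacle here: the whole argument rests on the two properties of $B$ above, and the only points requiring care are (i) spotting the reformulation $\overline{n}^m_{\alpha,\sigma}=aB(Q)/B(\Delta)$, which is exactly what makes everything transparent, and (ii) the sign bookkeeping when reducing to the case $a\le b$ and when handling the degenerate situations $a=b$ or $\Delta=0$ by continuity of $B$.
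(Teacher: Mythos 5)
Your proof is correct, and it takes a slightly different route from the paper's. The paper proves the convex-combination statement directly by exhibiting the barycentric coefficients: it rewrites $\overline{n}^{m}_{\alpha,\sigma} = \frac{B(y)-B(x)}{x-y}\, n_{\alpha,L}^m + \frac{B(-x)-B(-y)}{x-y}\, n_{\alpha,K}^m$ with $x = D_{K,\sigma}\log\boldsymbol{n}_\alpha^m$ and $y=-Q_{\alpha,K,\sigma}^m$, then argues the coefficients are non-negative (monotonicity of $B$) and sum to one (via $B(-t)-B(t)=t$), deferring details to \cite{Cances2020}. You instead collapse the expression to the closed form $\overline{n}^{m}_{\alpha,\sigma}= n^m_{\alpha,K}\,B(Q)/B(\Delta)$ and prove the two-sided bound $\min\le\overline{n}^{m}_{\alpha,\sigma}\le\max$ directly, recovering the convex combination afterwards. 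Both arguments rest on exactly the same two facts ($B(-x)=e^xB(x)$ and $B$ decreasing), so neither is deeper than the other; your version has the advantage of being fully self-contained and of making the edge-symmetry and the degenerate case $D_{K,\sigma}\boldsymbol{\varphi}_\alpha^m=0$ transparent (the ratio form is manifestly well defined there, whereas the raw quotient in \eqref{eq:interface-densities_np} is a $0/0$), while the paper's version hands you the convex weights explicitly, which is occasionally useful elsewhere in TPFA analyses. Your reduction to $n^m_{\alpha,K}\le n^m_{\alpha,L}$ is properly justified by the swap symmetry you verified, and the passage from the framing to the convex-combination statement is immediate for real numbers, so there is no gap.
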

    \begin{proof}
        It suffices to observe that for $\sigma = K|L$ (the boundary case can be readily adapted),
        \[
        \overline{n}^{m}_{\alpha,\sigma}
        = \frac{B(y) - B(x)}{x-y} n_{\alpha,L}^{m} + \frac{B(-x) - B(-y)}{x-y} n_{\alpha,K}^m,
        \]
        with $x = D_{K,\sigma} \log {\boldsymbol n}_{\alpha}^{m}$ and $y = - Q_{\alpha, K,\sigma}^m$. To see this, use the expression of the Bernoulli function $B$ to get that the coefficients are non-negative and sum to $1$. We refer to \cite{Cances2020} for additional details concerning this computation.
    \end{proof}

    \subsection{Discrete entropy-dissipation inequality}
    In the following, we derive a discrete counterpart of \eqref{eq:continuous-E-D-inequality} for the discrete relative entropy ($m \in \mathbb{N}$)
    \begin{multline}\label{eq:discrete-entropy}
        \mathbb{E}_\mathcal{T}^m =\,\frac{\lambda^2}{2} \sum_{\sigma \in \mathcal{E}} \tau_\sigma \left(D_{\sigma} ( \boldsymbol{\psi}^m - \boldsymbol{\psi}^D)\right)^2 + \sum_{K \in \mathcal{T}_{\text{intr}}} m_K \Phi_a(n_{a, K}^m) \\+  \delta\sum_{\alpha \in \{ n,p \} }  \sum_{K \in \mathcal{T}} m_K H_\alpha(n_{\alpha, K}^m, n_{\alpha, K}^D).
    \end{multline}
    We recall that the entropy functions $\Phi_a$ and $H_{\alpha}$ are defined in \eqref{eq:entropyfunc} and \eqref{eq:H-function}. The corresponding discrete non-negative dissipation $\mathbb{D}_{\mathcal{T}}^m$ for $m \in \mathbb{N}$ is given by
    \begin{multline} \label{eq:discrete-dissip}
        \mathbb{D}_{\mathcal{T}}^m = \frac{z_a^2}{2}\sum_{\sigma \in \mathcal{E}_\text{intr}^\text{int}} \tau_\sigma \overline{n}^{m}_{a,\sigma}  (D_{ \sigma} \boldsymbol{\varphi}_{a}^m)^2 + \frac{\delta}{2\nu}\sum_{\alpha \in \{ n,p \} }  \sum_{\sigma \in \mathcal{E}} \tau_\sigma  \overline{n}^{m}_{\alpha,\sigma}  (D_{\sigma} \boldsymbol{\varphi}_{\alpha}^m)^2\\ +  \frac{\delta}{\nu}\sum_{K \in \mathcal{T}}  m_K R(n^m_{n,K},n^m_{p,K})  \left( \varphi_{p,K}^m - \varphi_{n,K}^m \right).
    \end{multline}
    \begin{theorem}(\textbf{Discrete entropy-dissipation inequality}) \label{thm:discrete-E-D}
        For any solution to the finite volume scheme \eqref{eq:discrete-mass-balance}--\eqref{eq:inside-Bernoulli} one has the following entropy-dissipation inequality: For any $\varepsilon>0$, there is a constant $c_{\varepsilon, \mathbf{\Omega},\xi}>0$ such that for any $m\in\mathbb{N}$, one has
        \begin{equation} \label{eq:discrete-E-D-inequality}
            \frac{\mathbb{E}_{\mathcal{T}}^m - \mathbb{E}_{\mathcal{T}}^{m-1}}{\tau^m} +  \mathbb{D}_{\mathcal{T}}^m
            \leq c_{\varepsilon, \mathbf{\Omega},\xi} + \varepsilon \mathbb{E}_\mathcal{T}^m.
        \end{equation}
        The constant $c_{\varepsilon, \mathbf{\Omega},\xi}$ depends solely on $\varepsilon$, the measure of $\mathbf{\Omega}$, the mesh regularity $\xi>0$, the boundary data and the photogeneration term via the norms $\|G\|_{L^\infty}, \|\varphi^D\|_{W^{1,\infty}}$ and $\|\psi^D\|_{W^{1,\infty}}$, as well as on $z_a$ and the dimensionless parameters $\delta$, $\gamma$ and $\nu$. If $G = 0$ and $\nabla\varphi^D = \nabla\psi^D = \mathbf{0}$, then the right hand-side of \eqref{eq:discrete-E-D-inequality} vanishes.
    \end{theorem}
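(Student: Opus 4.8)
The plan is to mirror the proof of \Cref{thm:cont-E-D} step by step, replacing each integration by parts with a discrete (Abel) summation and each use of the chain/product rule by a convexity inequality, which is precisely what upgrades the exact time-stepping identity to an inequality. The starting point is to estimate $\mathbb{E}_\mathcal{T}^m - \mathbb{E}_\mathcal{T}^{m-1}$ from above. For the entropy terms I use convexity of $\Phi_a$ and of $x\mapsto H_\alpha(x,n_{\alpha,K}^D)$, namely $\Phi_a(n_{a,K}^m)-\Phi_a(n_{a,K}^{m-1})\le \mathcal{F}_a^{-1}(n_{a,K}^m)(n_{a,K}^m-n_{a,K}^{m-1})$ and, for $\alpha\in\{n,p\}$, $H_\alpha(n_{\alpha,K}^m,n_{\alpha,K}^D)-H_\alpha(n_{\alpha,K}^{m-1},n_{\alpha,K}^D)\le\bigl(\mathcal{F}_\alpha^{-1}(n_{\alpha,K}^m)-\mathcal{F}_\alpha^{-1}(n_{\alpha,K}^D)\bigr)(n_{\alpha,K}^m-n_{\alpha,K}^{m-1})$; for the electrostatic term I use the elementary bound $\tfrac12(a^2-b^2)\le a(a-b)$ with $a=D_\sigma(\boldsymbol\psi^m-\boldsymbol\psi^D)$, $b=D_\sigma(\boldsymbol\psi^{m-1}-\boldsymbol\psi^D)$. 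After dividing by $\tau^m$, this gives an upper bound on $(\mathbb{E}_\mathcal{T}^m-\mathbb{E}_\mathcal{T}^{m-1})/\tau^m$ in which every contribution is linear in the increments $n_{\alpha,K}^m-n_{\alpha,K}^{m-1}$ and $\psi_K^m-\psi_K^{m-1}$.

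Next I apply discrete integration by parts to the electrostatic contribution. Using that $\boldsymbol\psi^m-\boldsymbol\psi^{m-1}$ has vanishing trace on $\mathcal{E}^D$ and that $D_{K,\sigma}\boldsymbol\psi=0$ on $\mathcal{E}^N$, together with the discrete Poisson equation \eqref{eq:model-poisson-discrete} written at times $m$ and $m-1$ (the doping terms $C_K$ cancelling in the difference), this rewrites the electrostatic term as a sum over cells of $(\psi_K^m-\psi_K^D)$ against the $\delta$-weighted electron/hole density increments and the $z_a$-weighted anion increments. Adding the entropy terms from the first step and invoking the discrete state equations \eqref{eq:discrete-state-eq-m}, \eqref{eq:discrete-state-eq-Dirichlet-inside}, so that $\mathcal{F}_\alpha^{-1}(n_{\alpha,K}^m)=z_\alpha(\varphi_{\alpha,K}^m-\psi_K^m)$ and $\mathcal{F}_\alpha^{-1}(n_{\alpha,K}^D)=z_\alpha(\varphi_K^D-\psi_K^D)$, the electric potential terms recombine exactly as in the continuous proof, yielding
\[
\frac{\mathbb{E}_\mathcal{T}^m-\mathbb{E}_\mathcal{T}^{m-1}}{\tau^m}\le \sum_{K\in\mathcal{T}_\text{intr}}m_K z_a(\varphi_{a,K}^m-\psi_K^D)\frac{n_{a,K}^m-n_{a,K}^{m-1}}{\tau^m}+\delta\!\!\sum_{\alpha\in\{n,p\}}\sum_{K\in\mathcal{T}}m_K z_\alpha(\varphi_{\alpha,K}^m-\varphi_K^D)\frac{n_{\alpha,K}^m-n_{\alpha,K}^{m-1}}{\tau^m}.
\]

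I then substitute the discrete mass balances \eqref{eq:discrete-mass-balance} to express $m_K(n_{\alpha,K}^m-n_{\alpha,K}^{m-1})/\tau^m$ via the numerical fluxes (and, for $\alpha\in\{n,p\}$, the generation/recombination terms). A second discrete integration by parts, relying on the local conservativity \eqref{eq:conservativity} and on the fact that $\boldsymbol\varphi_\alpha^m-\boldsymbol\varphi^D$ has zero trace on $\mathcal{E}^D$ while $J_{\alpha,K,\sigma}^m=0$ on $\mathcal{E}^N$ (resp.\ outside $\mathcal{E}_\text{intr}^\text{int}$ for $\alpha=a$), moves the fluxes onto $D_{K,\sigma}(\boldsymbol\varphi_\alpha^m-\boldsymbol\varphi^D)$; simultaneously the generation/recombination contribution collapses to $\frac{\delta}{\nu}\sum_K m_K(\gamma G_K-R(n_{n,K}^m,n_{p,K}^m))(\varphi_{p,K}^m-\varphi_{n,K}^m)$ because $z_n+z_p=0$. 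Inserting the reformulated fluxes \eqref{eq:reformulated-fluxes} and applying Young's inequality to the cross terms $\overline{n}^m_{\alpha,\sigma}D_{K,\sigma}\boldsymbol\varphi_\alpha^m\,D_{K,\sigma}\boldsymbol\varphi^D$ (resp.\ $\boldsymbol\psi^D$) produces exactly $\mathbb{D}_\mathcal{T}^m$ as in \eqref{eq:discrete-dissip} on the left-hand side, up to the three nonnegative remainders $\frac{\delta\gamma}{\nu}\sum_K m_K G_K(\varphi_{p,K}^m-\varphi_{n,K}^m)$, $\ \frac{z_a^2}{2}\sum_{\sigma\in\mathcal{E}_\text{intr}^\text{int}}\tau_\sigma\overline{n}^m_{a,\sigma}(D_\sigma\boldsymbol\psi^D)^2$ and $\ \frac{\delta}{2\nu}\sum_{\alpha\in\{n,p\}}\sum_{\sigma\in\mathcal{E}}\tau_\sigma\overline{n}^m_{\alpha,\sigma}(D_\sigma\boldsymbol\varphi^D)^2$ — the exact discrete analogue of \eqref{eq:entropy-dissip-with-remainders}.

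It remains to bound these three terms. For the first, the state equation gives $\varphi_{p,K}^m-\varphi_{n,K}^m=\mathcal{F}_p^{-1}(n_{p,K}^m)+\mathcal{F}_n^{-1}(n_{n,K}^m)$, so that $G_K\ge0$ and \Cref{lemma:relation-inverse-primitive} (with $y_\alpha^D$ as in \eqref{def:yalphaD}) yield a bound $c_\varepsilon+\frac{\gamma\varepsilon}{\nu}\mathbb{E}_\mathcal{T}^m$ after summation. For the second, since $\mathcal{F}_a$ takes values in $(0,1)$, \Cref{lemma:bound-mean-value} gives $\overline{n}^m_{a,\sigma}<1$, and combining the Lipschitz bound $|D_{K,\sigma}\boldsymbol\psi^D|\le C\,\mathrm{diam}(K)\|\nabla\psi^D\|_\infty$ with the mesh regularity $\sum_{\sigma\in\mathcal{E}_K}m_\sigma d_\sigma\le m_K/\xi$ (so that $\sum_\sigma\tau_\sigma(D_\sigma\boldsymbol\psi^D)^2$ is controlled by a constant depending only on $|\mathbf{\Omega}|$, $\xi$ and $\|\psi^D\|_{W^{1,\infty}}$) makes this term a pure constant. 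For the third, \Cref{lemma:bound-mean-value} bounds $\overline{n}^m_{\alpha,\sigma}$ by the maximum of $n_{\alpha,K}^m$, $n_{\alpha,L}^m$ (or $y_\alpha^D$ on $\mathcal{E}^D$), so the same mesh-regularity manipulation reduces it to a constant times $\sum_K m_K n_{\alpha,K}^m$ (plus a constant), and \Cref{lemma:bound-primitive-argument}(i) then gives a bound $c_\varepsilon+\frac{\varepsilon}{\delta}\mathbb{E}_\mathcal{T}^m$. Collecting all contributions and redefining $\varepsilon$ proves \eqref{eq:discrete-E-D-inequality}; moreover, if $G\equiv0$ and $\nabla\varphi^D=\nabla\psi^D=\mathbf{0}$, then $G_K=0$ and all discrete differences of $\varphi^D$ and $\psi^D$ vanish, so all three remainders are zero. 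The main obstacle is the careful bookkeeping of the boundary contributions in the two discrete Green formulas — one must verify that the traces of $\boldsymbol\psi^m-\boldsymbol\psi^D$, $\boldsymbol\psi^m-\boldsymbol\psi^{m-1}$ and $\boldsymbol\varphi_\alpha^m-\boldsymbol\varphi^D$ on $\mathcal{E}^D$ genuinely vanish (so that $\varphi^D$ rather than $\psi^D$ appears in the right place, mimicking the continuous cancellations) and that the anion sums live only on $\mathcal{E}_\text{intr}^\text{int}$; the only genuinely new estimate compared with the continuous case is term $(III)$, where one must pass from the edge densities $\overline{n}^m_{\alpha,\sigma}$ to the cell densities $n_{\alpha,K}^m$ controlled by the entropy, which is exactly where \Cref{lemma:bound-mean-value} and the mesh-regularity constant $\xi$ enter.
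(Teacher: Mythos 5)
Your proposal is correct and follows essentially the same route as the paper's proof: convexity inequalities for the entropy differences, two discrete integrations by parts combined with the discrete Poisson equation and the state equations, the reformulated fluxes \eqref{eq:reformulated-fluxes} with $-a(a-b)\leq-(a^2-b^2)/2$ to extract $\mathbb{D}_\mathcal{T}^m$, and then Lemmas~\ref{lemma:bound-mean-value}, \ref{lemma:bound-primitive-argument} and \ref{lemma:relation-inverse-primitive} together with the mesh-regularity constant $\xi$ to control the three remainders. The only (harmless) deviation is in the anion remainder, where you use $\overline{n}^m_{a,\sigma}<1$ directly from \eqref{hyp:statistics-a} to get a pure constant, whereas the paper bounds it via Lemma~\ref{lemma:bound-primitive-argument}(ii) at the cost of an extra $\varepsilon\mathbb{E}_\mathcal{T}^m$; both are valid.
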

    \begin{proof}
        Let us start by considering the difference of the entropies at time $t^m$ and $t^{m-1}$, that is
    \begin{align*}
            \mathbb{E}_\mathcal{T}^m - \mathbb{E}_\mathcal{T}^{m-1} =& \,
            \frac{\lambda^2}{2} \sum_{\sigma \in \mathcal{E}} \tau_\sigma
            \left(\left(D_{\sigma} ( \boldsymbol{\psi}^m - \boldsymbol{\psi}^D)\right)^2 - \left(D_{ \sigma} ( \boldsymbol{\psi}^{m-1} - \boldsymbol{\psi}^D)\right)^2 \right)\\
            ~
            &+ \sum_{K \in \mathcal{T}_{\text{intr}}} m_K \left( \Phi_a(n_{a, K}^m) - \Phi_a(n_{a, K}^{m-1}) \right)\\
            ~
            &+ \delta  \sum_{\alpha \in \{ n,p \} }  \sum_{K \in \mathcal{T}} m_K \left( \Phi_\alpha(n_{\alpha, K}^m) - \Phi_\alpha(n_{\alpha, K}^{m-1})  - \Phi_\alpha'\left(n_{\alpha, K}^D\right)(n_{\alpha, K}^m - n_{\alpha, K}^{m-1}) \right).
    \end{align*}
    Using a convexity inequality in every sum one finds
    \begin{align*}
        \mathbb{E}_\mathcal{T}^m - \mathbb{E}_\mathcal{T}^{m-1} \leq& \,
        \lambda^2\sum_{\sigma \in \mathcal{E}} \tau_\sigma D_{K, \sigma} \left( \boldsymbol{\psi}^m - \boldsymbol{\psi}^D \right) D_{K, \sigma} \left(\mathbf{\psi}^m - \mathbf{\psi}^{m-1} \right)\\
        ~
        &+ \sum_{K \in \mathcal{T}_{\text{intr}}} m_K \mathcal{F}^{-1}_a(n_{a, K}^m) \Bigl( n_{a, K}^{m} - n_{a, K}^{m-1}\Bigr)\\
        ~
        &+ \delta \sum_{\alpha \in \{ n,p \} }  \sum_{K \in \mathcal{T}} m_K \left( \mathcal{F}^{-1}_\alpha(n_{\alpha, K}^m)-  \mathcal{F}^{-1}_\alpha\left(n_{\alpha, K}^D\right) \right) \left(n_{\alpha, K}^m - n_{\alpha, K}^{m-1}\right).
    \end{align*}
    In order to compute the first sum, we use a discrete integration by parts (consisting in reordering sums by using the conservativity relations on fluxes) and the discrete Poisson equation \eqref{eq:model-poisson-discrete} to get
    \[
    \begin{aligned}
     &\lambda^2 \sum_{\sigma \in \mathcal{E}} \tau_\sigma D_{K, \sigma} \left( \boldsymbol{\psi}^m - \boldsymbol{\psi}^D \right) D_{K, \sigma} \left(\boldsymbol{\psi}^m - \boldsymbol{\psi}^{m-1} \right)\\
     =& -\lambda^2 \sum_{K\in \mathcal{T}}\sum_{\sigma \in \mathcal{E}_K} \tau_\sigma D_{K, \sigma} \left(\boldsymbol{\psi}^m - \boldsymbol{\psi}^{m-1} \right) \left( \psi_K^m - \psi_K^D \right)\\
      =&\ \delta\!\!\sum_{\alpha \in \{ n, p \}} \sum_{K\in \mathcal{T}} m_K     z_\alpha \left(n_{\alpha,K}^m - n_{\alpha,K}^{m-1} \right) \left( \psi_K^m - \psi_K^D \right)
        ~
        + \sum_{K\in \mathcal{T}_{\text{intr}}} m_K z_a \left(n_{a,K}^m - n_{a,K}^{m-1} \right)  \left( \psi_K^m - \psi_K^D \right).\\
    \end{aligned}
    \]
    Plugging this relation back into the initial estimate and using the relation in \eqref{eq:discrete-state-eq}, we obtain
\begin{align*}
        \mathbb{E}_\mathcal{T}^m - \mathbb{E}_\mathcal{T}^{m-1} \leq&
        \ \delta\!\!\sum_{\alpha \in \{ n,p \} }  \sum_{K \in \mathcal{T}} m_K z_\alpha \left( \varphi_{\alpha,K}^m - \varphi_{K}^D \right) \left(n_{\alpha, K}^m - n_{\alpha, K}^{m-1} \right)
        \\
        &+ \sum_{K \in \mathcal{T}_{\text{intr}}} m_K z_a \left(\varphi_{a,K}^m - \psi_K^D \right) \left(n_{a, K}^m - n_{a, K}^{m-1}\right).
\end{align*}
Now, divide by the time step size $\tau^m$ and insert the mass balances in \eqref{eq:discrete-mass-balance}
\begin{multline*}
       \frac{ \mathbb{E}_\mathcal{T}^m - \mathbb{E}_\mathcal{T}^{m-1}}{\tau^m} \leq
       -\frac{\delta}{\nu} \sum_{\alpha \in \{ n,p \} } \sum_{K \in \mathcal{T}}  \sum_{\sigma \in \mathcal{E}_K} J_{\alpha,K, \sigma}^m  \left( \varphi_{\alpha,K}^m - \varphi_{K}^D \right)- \sum_{K \in \mathcal{T}_{\text{intr}}} \sum_{\sigma \in \mathcal{E}_K} J_{a,K, \sigma}^m   \left(\varphi_{a,K}^m - \psi_K^D \right) \\+ \frac{\delta}{\nu}\sum_{\alpha \in \{ n,p \} } \sum_{K \in \mathcal{T}} z_\alpha m_K \Bigl(\gamma G_K-R(n_{n,K}^m,n_{p,K}^m) \Bigr)  \left( \varphi_{\alpha,K}^m - \varphi_{K}^D \right).
\end{multline*}
Next, we insert the formulas for the fluxes \eqref{eq:reformulated-fluxes} with $z_\alpha^2 =1$, $\alpha = n, p$,  and perform a discrete integration by parts to deduce
\begin{multline*}
\frac{ \mathbb{E}_\mathcal{T}^m - \mathbb{E}_\mathcal{T}^{m-1}}{\tau^m} \leq
- \frac{\delta}{\nu}\sum_{\alpha \in \{ n,p \} }   \sum_{\sigma \in \mathcal{E}^\text{int}\cup\mathcal{E}^D} \tau_\sigma \overline{n}^{m}_{\alpha,\sigma}\,D_{K, \sigma} \boldsymbol{\varphi}_\alpha^m  D_{K, \sigma}\left( \boldsymbol{\varphi}_{\alpha}^m - \boldsymbol{\varphi}^D \right)\\
~
-  z_a^2\sum_{\sigma \in \mathcal{E}^\text{int}_\text{intr}}  \tau_\sigma \overline{n}^{m}_{a,\sigma}\,D_{K, \sigma} \boldsymbol{\varphi}_a^m    D_{K, \sigma}\left(\boldsymbol{\varphi}_{a}^m - \boldsymbol{\psi}^D \right)\\
~
- \frac{\delta}{\nu}\sum_{K \in \mathcal{T}} m_K R(n_{n,K}^m,n_{p,K}^m)  \left( \varphi_{p,K}^m - \varphi_{n,K}^m\right) + \frac{\delta\gamma}{\nu}\sum_{K \in \mathcal{T}}  m_K G_{K}   \left( \varphi_{p,K}^m - \varphi_{n,K}^m\right).
   \end{multline*}
After using the inequality $-a(a-b)\leq -(a^2-b^2)/2$ in the first two sums, we obtain

\begin{equation}\label{eq:remainders_to_estimate}
    \begin{split}
    \frac{ \mathbb{E}_\mathcal{T}^m - \mathbb{E}_\mathcal{T}^{m-1}}{\tau^m} +\mathbb{D}_\mathcal{T}^m &\leq
    ~
    \frac{\delta}{2\nu}\sum_{\alpha \in \{ n,p \} }   \sum_{\sigma \in \mathcal{E}^\text{int}\cup\mathcal{E}^D} \tau_\sigma \overline{n}^{m}_{\alpha,\sigma}\,(D_{\sigma} \boldsymbol{\varphi}^D)^2+\frac{z_a^2}{2} \sum_{\sigma \in \mathcal{E}^\text{int}_\text{intr}}  \tau_\sigma \overline{n}^{m}_{a,\sigma}\,(D_{\sigma} \boldsymbol{\psi}^D)^2  \\
    ~
    &\quad\; +\frac{\delta\gamma}{\nu} \sum_{K \in \mathcal{T}}  m_K G_{K}   \left( \varphi_{p,K}^m - \varphi_{n,K}^m\right).
    \end{split}
\end{equation}

   Observe that at this stage it is obvious that, if $G = 0$ and $\nabla\varphi^D = \nabla\psi^D = \mathbf{0}$, then the entropy-dissipation inequality of the theorem holds for a vanishing right hand-side. In the general case, it remains to estimate the different remainder terms in the right-hand-side of \eqref{eq:remainders_to_estimate}.

   For the first and second remainder terms in \eqref{eq:remainders_to_estimate} we need the following intermediate result. Let $\sigma = K|L\in\mathcal{E}^\text{int}$, then
\begin{align*}
\frac{D_{\sigma} \boldsymbol{\varphi}^D}{d_\sigma}&\leq \frac{1}{d_\sigma m_K m_L}\int_{K}\int_L|\varphi^D(\mathbf{x})-\varphi^D(\mathbf{y})|\,d\mathbf{x}\,d\mathbf{y}\\
&\leq \frac{\mathrm{diam}(K) + \mathrm{diam}(L)}{d_\sigma}\|\nabla\varphi^D\|_{L^\infty}\leq \frac{2}{\xi}\|\nabla\varphi^D\|_{L^\infty}\,.
\end{align*}
The last inequality holds also, if $\sigma\in\mathcal{E}^D$ or by replacing $\varphi^D$ with $\psi^D$. Let us now go back to the first remainder term of \eqref{eq:remainders_to_estimate}. We set
$$
S_1=\frac{\delta}{2\nu}\sum_{\alpha \in \{ n,p \} }   \sum_{\sigma \in \mathcal{E}^\text{int}\cup\mathcal{E}^D} \tau_\sigma \overline{n}^{m}_{\alpha,\sigma}\,(D_{\sigma} \boldsymbol{\varphi}^D)^2.
$$
Since $\overline{n}^{m}_{\alpha,\sigma}$ is the convex combination of two non-negative unknowns (see Lemma~\ref{lemma:bound-mean-value}) it is bounded from above by the sum of these unknowns. It yields
\begin{align*}
  S_1
  &\leq \frac{2\delta}{\nu \xi^2} \|\nabla\varphi^D\|_{L^\infty}^2\sum_{\alpha \in \{ n,p \} }\sum_{\sigma \in \mathcal{E}^\text{int}\cup\mathcal{E}^D} m_\sigma\,d_\sigma \overline{n}^{m}_{\alpha,\sigma}\\
  ~
  &\leq  \frac{2\delta}{\nu \xi^2} \|\nabla\varphi^D\|_{L^\infty}^2\sum_{\alpha \in \{ n,p \} }\left(2\sum_{K\in\mathcal{T}}n_{\alpha,K}^m\sum_{\sigma \in \mathcal{E}_K} m_\sigma\,d_\sigma + \sum_{\sigma \in \mathcal{E}^D} m_\sigma\,d_\sigma y_\alpha^D\right)\\
  ~
  &\leq \frac{2\delta}{\nu \xi^2} \|\nabla\varphi^D\|_{L^\infty}^2\sum_{\alpha \in \{ n,p \} }\left(\frac{2}{\xi}\sum_{K\in\mathcal{T}}m_Kn_{\alpha,K}^m + |\mathbf{\Omega}|y_\alpha^D\right)\\
  ~
  &\leq \frac{2\delta}{\nu \xi^2} \|\nabla\varphi^D\|_{L^\infty}^2\left(\frac{2}{\xi}\varepsilon\mathbb{E}_\mathcal{T}^m + \sum_{\alpha \in \{ n,p \} } \left(\frac{2}{\xi}  |\mathbf{\Omega}| c_{y_\alpha^D,\varepsilon} + |\mathbf{\Omega}|y_\alpha^D\right)\right)\,,
\end{align*}
where $y_\alpha^D$ has been defined in \eqref{def:yalphaD} and $c_{y_\alpha^D,\varepsilon}>0$ is the constant of the inequality (i) in Lemma~\ref{lemma:bound-primitive-argument}. Similarly, by using (ii) in Lemma~\ref{lemma:bound-primitive-argument} we obtain that the second remainder term satisfies
\begin{align*}
 S_2 = \frac{z_a^2}{2} \sum_{\sigma \in \mathcal{E}^\text{int}_\text{intr}}  \tau_\sigma \overline{n}^{m}_{a,\sigma}\, (D_{ \sigma} \boldsymbol{\psi}^D)^2&\leq\frac{4z_a^2}{\xi^3}   \|\nabla\psi^D\|_{L^\infty}^2    (\varepsilon\mathbb{E}_\mathcal{T}^m + c_\varepsilon   |\mathbf{\Omega}|)\,.
\end{align*}
For the last remainder term coming from the  photogeneration we set
   $$
   S_3 = \frac{\delta\gamma}{\nu}   \sum_{K \in \mathcal{T}}  m_K G_{K}   \left( \varphi_{p,K}^m - \varphi_{n,K}^m\right) = \frac{\delta\gamma}{\nu}   \sum_{K \in \mathcal{T}}  m_K G_{K}   \left(\mathcal{F}^{-1}_n(n_{n, K}^m)+\mathcal{F}^{-1}_p(n_{p, K}^m)\right),
   $$
   and we use the state equation \eqref{eq:discrete-state-eq-m} and Lemma~\ref{lemma:relation-inverse-primitive} to estimate
   \begin{align*}
     S_3 &\leq \frac{\delta\gamma}{\nu}   \left(\max_{K\in\mathcal{T}}G_K\right) \sum_{K \in \mathcal{T}}  m_K  \Bigl(\max(\mathcal{F}^{-1}_n(n_{n, K}^m),0)+\max(\mathcal{F}^{-1}_p(n_{p, K}^m),0)\Bigr)\\
     ~
     &\leq \frac{\delta\gamma}{\nu}   ||G||_{L^\infty}\!\!\! \sum_{\alpha \in \{ n,p\} } \sum_{K \in \mathcal{T}} m_K \left( c_{y_\alpha^D, \varepsilon} +    \varepsilon H_\alpha(n_{\alpha, K}^m, n_{\alpha, K}^D ) \right)\\
     ~
     &\leq  \frac{\delta\gamma}{\nu}   ||G||_{L^\infty}\left( |\mathbf{\Omega}|c_{y_\alpha^D, \varepsilon} +    \varepsilon \mathbb{E}_\mathcal{T}^m \right)\,.
\end{align*}
For the last step it is important to remember that each term in the definition of the entropy is non-negative.
Therefore, if we combine everything back into \eqref{eq:remainders_to_estimate} we find
\[
\frac{ \mathbb{E}_\mathcal{T}^m - \mathbb{E}_\mathcal{T}^{m-1}}{\tau^m} +\mathbb{D}_\mathcal{T}^m
~
\leq
~
c_{1, \xi } \Bigl( \varepsilon \mathbb{E}_\mathcal{T}^m + c_{2,\varepsilon, \mathbf{\Omega} } \Bigr),
\]
for some constants $c_{1, \xi } , c_{2,\varepsilon, \mathbf{\Omega} } > 0$ depending on all the aforementioned quantities. Since $c_{1, \xi }$ does not depend on $\varepsilon$, this is equivalent to the desired inequality \eqref{eq:discrete-E-D-inequality} up to a redefinition of $\varepsilon$.
\end{proof}
From the discrete entropy-dissipation  inequality \eqref{eq:discrete-E-D-inequality}, we can deduce some bounds on the entropy $\mathbb{E}_\mathcal{T}^m$ and on the cumulated dissipation $ \sum_{k=1}^m\tau^k\mathbb{D}_\mathcal{T}^k$ for any $m>0$ thanks to a  discrete Grönwall's Lemma.
Corollary~\ref{cor:bounds-entropy-dissip} states the discrete counterpart of Corollary~\ref{cor:bounds-E-D}.
\begin{corollary}\label{cor:bounds-entropy-dissip}
    Provided that $\varepsilon < (\Delta t)^{-1}$, one has for any $m\geq 1$ that
    \begin{equation}\label{eq:bound_discrete_entropy_dissipation}
    \mathbb{E}_\mathcal{T}^m + \sum_{k=1}^m\tau^k\mathbb{D}_\mathcal{T}^k\leq (\mathbb{E}_\mathcal{T}^0 + c_{\varepsilon, \mathbf{\Omega},\xi} t^m)(1-\varepsilon\Delta t)^{-m}.
    \end{equation}
\end{corollary}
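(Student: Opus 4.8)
The plan is to convert the one–step inequality \eqref{eq:discrete-E-D-inequality} into the claimed closed form by a standard implicit discrete Grönwall argument, taking care of the non‑uniform time steps. First I would introduce the shorthand $A^m = \mathbb{E}_\mathcal{T}^m + \sum_{k=1}^m\tau^k\mathbb{D}_\mathcal{T}^k$ for $m\geq 1$ and $A^0=\mathbb{E}_\mathcal{T}^0$, and observe that the left‑hand side of \eqref{eq:discrete-E-D-inequality} multiplied by $\tau^m$ is exactly $A^m-A^{m-1}$. Hence
\[
A^m - A^{m-1} \ \leq\ c_{\varepsilon,\mathbf{\Omega},\xi}\,\tau^m \ +\ \varepsilon\,\tau^m\,\mathbb{E}_\mathcal{T}^m .
\]
Since each $\mathbb{D}_\mathcal{T}^k$ is non‑negative (as recorded just after \eqref{eq:discrete-dissip}) one has $\mathbb{E}_\mathcal{T}^m\leq A^m$, so the last term is bounded by $\varepsilon\,\tau^m\,A^m$, and rearranging gives
\[
(1-\varepsilon\tau^m)\,A^m \ \leq\ A^{m-1} + c_{\varepsilon,\mathbf{\Omega},\xi}\,\tau^m .
\]

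The hypothesis $\varepsilon<(\Delta t)^{-1}$ enters here: because $\tau^m\leq\Delta t$ we have $0<1-\varepsilon\Delta t\leq 1-\varepsilon\tau^m$, so we may divide, and using $1-\varepsilon\tau^m\geq 1-\varepsilon\Delta t$ we obtain the recursion
\[
A^m \ \leq\ \frac{A^{m-1} + c_{\varepsilon,\mathbf{\Omega},\xi}\,\tau^m}{1-\varepsilon\Delta t}.
\]
Iterating this from $m$ down to $0$ (equivalently, a one‑line induction on $m$) yields, with $\rho := (1-\varepsilon\Delta t)^{-1}>1$,
\[
A^m \ \leq\ \rho^m A^0 + c_{\varepsilon,\mathbf{\Omega},\xi}\sum_{k=1}^m \rho^{\,m-k+1}\,\tau^k .
\]
Finally, since $m-k+1\leq m$ for every $k\geq 1$ and $\rho>1$, each factor $\rho^{\,m-k+1}$ is at most $\rho^m$; pulling it out of the sum and using $\sum_{k=1}^m\tau^k=t^m$ gives exactly $A^m\leq \rho^m\bigl(\mathbb{E}_\mathcal{T}^0 + c_{\varepsilon,\mathbf{\Omega},\xi}\,t^m\bigr)$, which is \eqref{eq:bound_discrete_entropy_dissipation}.

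There is no genuine obstacle: this is a routine implicit discrete Grönwall estimate. The only points needing a little care are (i) keeping the denominator $1-\varepsilon\tau^m$ strictly positive — which is precisely why the smallness condition $\varepsilon<(\Delta t)^{-1}$ is imposed — and (ii) handling the non‑uniform step sizes so that the accumulated constant collapses to $c_{\varepsilon,\mathbf{\Omega},\xi}\,t^m$ rather than something mesh‑dependent; this is what the crude bound $\rho^{\,m-k+1}\leq\rho^m$ accomplishes. One should also recall that $\mathbb{E}_\mathcal{T}^m\geq 0$, each of its three contributions being non‑negative, so that \eqref{eq:bound_discrete_entropy_dissipation} is a genuine a priori bound on $\mathbb{E}_\mathcal{T}^m$ and on the cumulated dissipation $\sum_{k=1}^m\tau^k\mathbb{D}_\mathcal{T}^k$ simultaneously.
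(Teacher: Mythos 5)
Your proof is correct and follows essentially the same route as the paper: both are implicit discrete Gr\"onwall arguments applied to the one-step inequality \eqref{eq:discrete-E-D-inequality}, using $\varepsilon<(\Delta t)^{-1}$ to keep $1-\varepsilon\tau^m$ positive and then bounding the accumulated factors by $(1-\varepsilon\Delta t)^{-m}$. The only (harmless) difference is bookkeeping: you absorb the cumulated dissipation into $A^m$ and use $\mathbb{E}_\mathcal{T}^m\leq A^m$ to close the recursion, whereas the paper multiplies $\mathbb{E}_\mathcal{T}^j$ by the partial products $\prod_{k\leq j}(1-\varepsilon\tau^k)$ and telescopes, keeping the dissipation terms with their exact weights before estimating the products by $1$ from below and $(1-\varepsilon\Delta t)^{-m}$ from above.
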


\begin{proof}
For $j\in\N$, we define
$$
w^j=\mathbb{E}_\mathcal{T}^j\prod_{k=1}^j (1-\varepsilon \tau^k).
$$
Using \eqref{eq:discrete-E-D-inequality}, we obtain that
$$
w^j-w^{j-1} +\tau^j\mathbb{D}_\mathcal{T}^j \prod_{k=1}^{j-1} (1-\varepsilon \tau^k) \leq c_{\varepsilon, \mathbf{\Omega},\xi} \tau^j\prod_{k=1}^{j-1} (1-\varepsilon \tau^k)
$$
and summing over $m$, we get
$$
w^m-w^0+\sum_{j=1}^m \tau^j \mathbb{D}_\mathcal{T}^j \prod_{k=1}^{j-1} (1-\varepsilon \tau^k)\leq c_{\varepsilon, \mathbf{\Omega}, \xi} \sum_{j=1}^m
 \tau^j\prod_{k=1}^{j-1} (1-\varepsilon \tau^k).
$$
We can now multiply the last inequality by $\prod_{k=1}^m(1-\varepsilon \tau^k)^{-1}$ in order to come back to the discrete entropy.
It yields
$$
\mathbb{E}_\mathcal{T}^m+\sum_{j=1}^m \tau^j \mathbb{D}_\mathcal{T}^j \prod_{k=j}^{m} (1-\varepsilon \tau^k)^{-1}
\leq \prod_{k=1}^m(1-\varepsilon \tau^k)^{-1}\mathbb{E}_\mathcal{T}^0 + c_{\varepsilon,\Omega,\xi} \sum_{j=1}^m
 \tau^j\prod_{k=j}^{m} (1-\varepsilon \tau^k)^{-1}.
$$
But, since $\varepsilon \Delta t <1$, we have
$$
1\leq \prod_{k=j}^{m} (1-\varepsilon \tau^k)^{-1}\leq \prod_{k=1}^{m} (1-\varepsilon \tau^k)^{-1}\leq (1-\varepsilon \Delta t)^{-m},
$$
which yields \eqref{eq:bound_discrete_entropy_dissipation} as $\sum_{j=1}^m\tau^j=t^m.$
\end{proof}
\section{Existence of a discrete solution}
\label{sec:existence}
In this section, we will now establish the existence of a solution to the finite volume scheme \eqref{eq:discrete-mass-balance}--\eqref{eq:inside-Bernoulli}, which consists of a nonlinear system of equations at each time step. Knowing  the solution at step $m-1$, we want to establish the existence of a solution at time step $m$. We may consider that the unknowns of the nonlinear system of equations are the quasi Fermi potentials and the electrostatic potential, as the densities of electrons, holes and anion vacancies are defined as functions of these potentials through \eqref{eq:discrete-state-eq}.
The proof consists of three main parts: we start in \Cref{sec:poisson-existence} showing the existence and uniqueness of a discrete electric potential for given quasi Fermi potentials associated to the Poisson equation and continue in Section \ref{ssec:estimates} with proving some {\em a priori estimates} on the quasi Fermi and electrostatic potentials, obtained as consequences of the bounds on the entropy and the dissipation. Then, in Section \ref{ssec:exthm} the existence of quasi Fermi potentials is shown which finalizes the proof.
For this, forgetting the superscript $m$, we denote by $\mathbf{X}$ the vector containing the unknown quasi Fermi potentials which is defined by
\begin{equation}\label{def.X}
    \mathbf{X}=\Bigl( (\varphi_{n,K}-\varphi_{n,K}^D)_{K\in\T}, (\varphi_{p,K}-\varphi_{p,K}^D)_{K\in\T},(\varphi_{a,K}-\psi_{K}^D)_{K\in\T_{\text{intr}}}\Bigr).
\end{equation}
\subsection{Existence of electric potential} \label{sec:poisson-existence}
The aim of the first lemma is to show the existence of a unique $\boldsymbol{\psi} = \left( \psi_K \right)_{K\in\T}$ dependent on $\mathbf{X}$.
\begin{lemma} \label{lem:discrete-poisson}
    Let $\mathbf{X}$ denote the vector containing the unknown quasi Fermi potentials as defined in \eqref{def.X}. Then, there exists a unique solution $\boldsymbol{\psi}(\mathbf{X})$ to the discrete nonlinear Poisson equation \eqref{eq:model-poisson-discrete}. Further, the mapping $\mathbf{X} \mapsto \boldsymbol{\psi}(\mathbf{X})$ is continuous.
\end{lemma}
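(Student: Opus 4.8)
\emph{Proof proposal.} The plan is to realise $\boldsymbol{\psi}(\mathbf{X})$ as the unique minimiser of a strictly convex, coercive functional on the finite-dimensional space $\mathbb{R}^{\#\mathcal{T}}$, whose Euler--Lagrange equation is exactly the discrete nonlinear Poisson equation \eqref{eq:model-poisson-discrete} with the densities eliminated through the state equation \eqref{eq:discrete-state-eq-m}. For $\alpha\in\{n,p,a\}$ let $\mathcal{G}_\alpha$ be a ($C^2$, convex) antiderivative of $\mathcal{F}_\alpha$, and recall $z_n=-1$, $z_p=1$, $z_a>0$. With $\mathbf{X}$ (hence $\boldsymbol{\varphi}_n,\boldsymbol{\varphi}_p,\boldsymbol{\varphi}_a$) fixed, I would introduce
\begin{multline*}
    J(\boldsymbol{\psi}) = \frac{\lambda^2}{2}\sum_{\sigma\in\mathcal{E}}\tau_\sigma\bigl(D_{K,\sigma}\boldsymbol{\psi}\bigr)^2
    + \delta\sum_{K\in\mathcal{T}}m_K\Bigl(\mathcal{G}_n(\psi_K-\varphi_{n,K}) + \mathcal{G}_p(\varphi_{p,K}-\psi_K) + C_K\psi_K\Bigr)\\
    + \sum_{K\in\mathcal{T}_{\text{intr}}}m_K\,\mathcal{G}_a\bigl(z_a(\varphi_{a,K}-\psi_K)\bigr),
\end{multline*}
where the first sum is the discrete Dirichlet energy attached to the left-hand side of \eqref{eq:model-poisson-discrete} (Neumann faces contribute nothing, Dirichlet faces contribute $\tau_\sigma(\psi^D_\sigma-\psi_K)^2$). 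Computing $\partial_{\psi_K}J$, using $\mathcal{G}_\alpha'=\mathcal{F}_\alpha$ and $n_{\alpha,K}=\mathcal{F}_\alpha(z_\alpha(\varphi_{\alpha,K}-\psi_K))$, one checks that $\nabla J(\boldsymbol{\psi})=0$ is equivalent to \eqref{eq:model-poisson-discrete}; the sign bookkeeping with the charge numbers $z_\alpha$ and the prefactor $\delta$ is what pins down the precise form of $J$.

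Next I would verify the two structural properties. \emph{Strict convexity}: the Hessian of $J$ is the sum of the symmetric positive semi-definite discrete Dirichlet matrix $\lambda^2 A$ and the diagonal matrix whose $K$-th entry is $\delta m_K\mathcal{F}_n'(\psi_K-\varphi_{n,K}) + \delta m_K\mathcal{F}_p'(\varphi_{p,K}-\psi_K)$, augmented by $m_K z_a^2\mathcal{F}_a'(z_a(\varphi_{a,K}-\psi_K))$ on intrinsic cells; this is positive definite since $\mathcal{F}_\alpha'>0$ by \eqref{hyp:statistics-n-p}--\eqref{hyp:statistics-a}. \emph{Coercivity}: for fixed $\boldsymbol{\varphi}$'s and each $K$, the map $\psi_K\mapsto\mathcal{G}_n(\psi_K-\varphi_{n,K})+\mathcal{G}_p(\varphi_{p,K}-\psi_K)$ tends to $+\infty$ as $|\psi_K|\to\infty$: since $\mathcal{F}_n$ is an increasing diffeomorphism onto $(0,\infty)$, $\mathcal{G}_n$ grows super-linearly at $+\infty$ and is bounded below (because $\int_{-\infty}^0\mathcal{F}_n\le\int_{-\infty}^0 e^t\,dt<\infty$ by \eqref{hyp:statistics-n-p}), and symmetrically for $\mathcal{G}_p$ at $-\infty$. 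This dominates the linear terms $C_K\psi_K$, and together with the non-negativity of the Dirichlet energy it gives $J(\boldsymbol{\psi})\to+\infty$ as $\|\boldsymbol{\psi}\|\to\infty$. A strictly convex, coercive, continuous functional on $\mathbb{R}^{\#\mathcal{T}}$ admits a unique minimiser, which is its unique critical point; hence \eqref{eq:model-poisson-discrete} has a unique solution $\boldsymbol{\psi}(\mathbf{X})$.

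For the continuity of $\mathbf{X}\mapsto\boldsymbol{\psi}(\mathbf{X})$ I would invoke the implicit function theorem: the residual map of \eqref{eq:model-poisson-discrete}, regarded as a function of $(\mathbf{X},\boldsymbol{\psi})$, is $C^1$ because $\mathcal{F}_\alpha\in C^1$; its zero set is the graph of $\mathbf{X}\mapsto\boldsymbol{\psi}(\mathbf{X})$; and its differential with respect to $\boldsymbol{\psi}$ equals $-\mathrm{Hess}\,J$, which is invertible by the positive-definiteness just established. Hence $\boldsymbol{\psi}(\cdot)$ is $C^1$, in particular continuous. (Alternatively, one can argue by compactness: for $\mathbf{X}$ in a bounded set the minimisers are uniformly bounded via the coercivity estimate, and any limit point of $\boldsymbol{\psi}(\mathbf{X}_k)$ along $\mathbf{X}_k\to\mathbf{X}$ solves the limit equation, hence equals $\boldsymbol{\psi}(\mathbf{X})$ by uniqueness, so the full sequence converges.)

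The main obstacle is not conceptual but a matter of care: writing $J$ so that its gradient reproduces \eqref{eq:model-poisson-discrete} \emph{exactly} — correct treatment of Dirichlet versus Neumann faces in the quadratic part and correct signs and prefactors $z_\alpha$, $\delta$ in the density part — and making sure the coercivity is drawn from the growth of $\mathcal{G}_n$ and $\mathcal{G}_p$ themselves, since the discrete Dirichlet energy is only semi-definite when taken on its own. Everything else is standard finite-dimensional convex analysis.
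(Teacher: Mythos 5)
Your proposal is correct and follows essentially the same route as the paper: the same convex functional (discrete Dirichlet energy plus primitives $\mathcal{G}_\alpha$ of the statistics plus a linear doping term), whose critical-point equation is \eqref{eq:model-poisson-discrete}, with strict convexity and coercivity giving a unique minimiser and the implicit function theorem giving continuity of $\mathbf{X}\mapsto\boldsymbol{\psi}(\mathbf{X})$. The one slip is the sign of the doping term: it must enter as $-\delta\sum_{K}m_K C_K\psi_K$ (as in the paper), not $+\delta\sum_K m_K C_K\psi_K$, for $\partial_{\psi_K}J=0$ to reproduce \eqref{eq:model-poisson-discrete}; this is cosmetic and affects none of the convexity, coercivity or invertibility arguments. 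A minor genuine difference: the paper obtains coercivity from a discrete Poincar\'e inequality applied to the quadratic part together with boundedness from below of the statistics contributions, whereas you draw it from the superlinear growth of $\mathcal{G}_n$ and $\mathcal{G}_p$ dominating the linear term — both are valid.
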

\begin{proof}
    Let us define the discrete functional
    \begin{align*}
        \mathcal{J}(\boldsymbol{\Psi}) =\, &\frac{\lambda^2}{2}\sum_{\sigma \in \mathcal{E}}\tau_\sigma |D_{\sigma}\boldsymbol{\Psi}|^2 + \delta\!\!\sum_{\alpha\in\{n,p\}}\sum_{K\in\mathcal{T}}m_K\mathcal{G}_\alpha(z_\alpha(\varphi_{\alpha,K} - \Psi_{K})) + \!\!\!\sum_{K\in\mathcal{T}_\text{intr}}m_K\mathcal{G}_a(z_a(\varphi_{a,K} - \Psi_{K}))\\
        ~
        & - \delta\sum_{K\in\mathcal{T}}m_KC_K\Psi_K,
    \end{align*}
    where $\mathcal{G}_\alpha$ denotes the primitive of $\mathcal{F}_\alpha$ which vanishes at $-\infty$. We can compute $\nabla \mathcal{J} $, where the $K$-th component is given by
    \begin{equation*}
        \frac{\partial \mathcal{J}}{\partial \Psi_K} =
        \left\{
            \begin{aligned}
            &- \lambda^2\sum_{\sigma \in \mathcal{E}_K}\tau_\sigma D_{K,\sigma}\boldsymbol{\Psi}- \delta m_K\Bigl(C_K +  \!\! \sum_{\alpha\in\{n,p\}} z_\alpha\mathcal{F}_\alpha(z_\alpha(\varphi_{\alpha,K} - \Psi_{K})) \Bigr),&&K\in\mathcal{T}\setminus\mathcal{T}_\text{intr},\\[1.0ex]
            ~
            &- \lambda^2\sum_{\sigma \in \mathcal{E}_K}\tau_\sigma D_{K,\sigma}\boldsymbol{\Psi}- \delta m_K\Bigl(C_K +  \!\! \sum_{\alpha\in\{n,p\}} z_\alpha\mathcal{F}_\alpha(z_\alpha(\varphi_{\alpha,K} - \Psi_{K})) \Bigr) \\
            &- m_Kz_a\mathcal{F}_a(z_a(\varphi_{a,K} - \psi_{K})) ,&&K\in\mathcal{T}_\text{intr}.
        \end{aligned}\right.
    \end{equation*}
    We conclude that a solution $\boldsymbol{\psi}$ to the discrete Poisson equation \eqref{eq:model-poisson-discrete} satisfies $\nabla \mathcal{J}(\boldsymbol{\psi}) = \mathbf{0}$. The existence of a global minimum of $\mathcal{J}$ is guaranteed through its continuity and coercivity. The coercivity follows from the coercivity of $\boldsymbol{\Psi} \mapsto \frac{\lambda^2}{2}\sum_{\sigma \in \mathcal{E}}\tau_\sigma |D_{\sigma}\boldsymbol{\Psi}|^2- \delta\sum_{K\in\mathcal{T}}m_KC_K\Psi_K$ (by a discrete Poincar\'e inequality \cite{Eymard2000}) and the boundedness from below of the two other contributions. The strict convexity of $\mathcal{J}$, due to being a sum of a strictly convex and convex functions, gives the uniqueness of this global minimum $\boldsymbol{\psi}$.
    Lastly, the continuity of $\mathbf{X} \mapsto \boldsymbol{\psi}(\mathbf{X})$ follows from the implicit function theorem applied to $\nabla\mathcal{J}$ since the Hessian of $\mathcal{J}$ with respect to $\boldsymbol{\Psi}$ is strictly row diagonally dominant.
\end{proof}
As a consequence of \Cref{lem:discrete-poisson} we can interpret in the following the electric potential as a continuous map $\boldsymbol{\psi} = \boldsymbol{\psi} (\mathbf{X})$.
\subsection{A priori estimates}\label{ssec:estimates}
The discrete entropy defined by \eqref{eq:discrete-entropy} can be denoted by ${\mathbb E}_\T(\mathbf{X})$ and its associated dissipation defined by \eqref{eq:discrete-dissip}
can be denoted by ${\mathbb D}_\T(\mathbf{X})$.  In this dissipation, we may distinguish the contributions of electrons, holes, anion vacancies and of the recombination-generation terms. Therefore, we introduce the following notations
\begin{align}
{\mathbb D}_{\T,a}(\mathbf{X})&=\frac{ z_a^2 }{2}\sum_{\sigma \in \mathcal{E}_{\rm intr}^{\rm int}}\tau_\sigma \overline{n}_{a,\sigma}  ( D_{\sigma} \boldsymbol{\varphi}_{a})^2  ,\label{eq:dis-diss-a}\\
{\mathbb D}_{\T,\alpha}(\mathbf{X})&= \frac{ \delta }{2 \nu }\sum_{\sigma \in \mathcal{E} }\tau_\sigma \overline{n}_{\alpha,\sigma}  ( D_{\sigma} \boldsymbol{\varphi}_{\alpha})^2 ,\quad \text{for all} \; \alpha \in \{n,p\}. \label{eq:dis-diss-alpha}
\end{align}
In this section, the letter $R$ refers to a positive number, not to the recombination term.
\begin{lemma}\label{lem.bounds.psi}
Assume that there exists $M_E>0$, such that ${\mathbb E}_\T(\mathbf{X})\leq M_E$. Then, there exists some $R>0$ depending on $M_E$, $\lambda$ and on the mesh $\T$, such that

\begin{equation}\label{bounds.psi}
-R\leq \psi_K-\psi_K^D\leq R, \quad \forall K\in\T.
\end{equation}
\end{lemma}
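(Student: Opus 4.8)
The plan is to extract from the hypothesis a bound on a discrete $H^1$-seminorm of the cell vector $\boldsymbol{w}:=(\psi_K-\psi_K^D)_{K\in\T}$, then to invoke a discrete Poincar\'e inequality, and finally to use that the mesh has finitely many cells in order to turn an $\ell^2$-type estimate into the pointwise bound \eqref{bounds.psi}.

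First I would observe that each of the three contributions to the discrete entropy \eqref{eq:discrete-entropy} is non-negative: the electric term is a weighted sum of squares, the anion-vacancy term is non-negative because of the normalisation chosen for $\Phi_a$ (so that $\Phi_a\geq 0$), and the terms $H_\alpha(n_{\alpha,K},n_{\alpha,K}^D)$ are non-negative by convexity of $\Phi_\alpha$. Hence ${\mathbb E}_\T(\mathbf{X})\leq M_E$ yields at once $\sum_{\sigma\in\mathcal{E}}\tau_\sigma\bigl(D_\sigma(\boldsymbol{\psi}-\boldsymbol{\psi}^D)\bigr)^2\leq 2M_E/\lambda^2$. The next point is to recognise the left-hand side as the squared discrete $H^1$-seminorm of $\boldsymbol{w}$ with a homogeneous Dirichlet condition on $\mathbf{\Gamma}^D$ and a homogeneous Neumann condition on $\mathbf{\Gamma}^N$: by linearity of $D_{K,\sigma}$ and since $\boldsymbol{\psi}$ and $\boldsymbol{\psi}^D$ carry the same Dirichlet data $\psi^D_\sigma$ on $\mathcal{E}^D$, one has $D_{K,\sigma}(\boldsymbol{\psi}-\boldsymbol{\psi}^D)=w_L-w_K$ for an interior edge $\sigma=K|L$, $D_{K,\sigma}(\boldsymbol{\psi}-\boldsymbol{\psi}^D)=-w_K$ for $\sigma\in\mathcal{E}^D\cap\mathcal{E}_K$, and $D_{K,\sigma}(\boldsymbol{\psi}-\boldsymbol{\psi}^D)=0$ on $\mathcal{E}^N$; in particular the sum contains the boundary term $\sum_{\sigma\in\mathcal{E}^D}\tau_\sigma w_K^2$.

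Since $\mathbf{\Gamma}^D$ is non-empty (it carries the two ohmic contacts) and $\mathbf{\Omega}$ is connected, I would then apply a discrete Poincar\'e inequality on admissible meshes, see \cite{Eymard2000}, to obtain a constant $C_P>0$ depending only on $\mathbf{\Omega}$ and on $\mathcal{T}$ such that $\sum_{K\in\mathcal{T}}m_K w_K^2\leq C_P\sum_{\sigma\in\mathcal{E}}\tau_\sigma(D_\sigma\boldsymbol{w})^2\leq 2C_PM_E/\lambda^2$. Finally, as the mesh has finitely many cells, $|w_K|\leq m_K^{-1/2}\bigl(\sum_{L\in\mathcal{T}}m_L w_L^2\bigr)^{1/2}$ for each $K$, which gives \eqref{bounds.psi} with $R=(\min_{L\in\mathcal{T}}m_L)^{-1/2}(2C_PM_E/\lambda^2)^{1/2}$, a quantity depending only on $M_E$, $\lambda$ and the mesh $\mathcal{T}$, as claimed. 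No step here is genuinely delicate; the only points deserving a little care are the bookkeeping of the boundary-edge contributions to the discrete seminorm and checking that $\mathcal{E}^D\neq\emptyset$, which is precisely what makes the discrete Poincar\'e inequality applicable. If one prefers to avoid quoting Poincar\'e, the same conclusion follows by telescoping $w_K$ along an edge path from $K$ to a cell touching $\mathbf{\Gamma}^D$, at the price of a bound expressed through path lengths in $\mathcal{T}$.
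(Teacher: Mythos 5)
Your proof is correct, but it takes a different route from the paper's. The paper argues pointwise and combinatorially: from $\frac{\lambda^2}{2}\sum_\sigma\tau_\sigma(D_\sigma(\boldsymbol{\psi}-\boldsymbol{\psi}^D))^2\leq M_E$ it reads off, edge by edge, a bound on $|w_K|$ for each cell $K$ touching a Dirichlet edge and a bound on $|w_K-w_L|$ for each interior edge $\sigma=K|L$, and then propagates these bounds inductively through the mesh using connectedness and the finite number of control volumes --- exactly the ``telescoping along an edge path'' argument you relegate to your closing remark. You instead pass through an intermediate $\ell^2$ estimate: discrete Poincar\'e (legitimate here since $\mathbf{\Gamma}^D\neq\emptyset$ and $\mathbf{\Omega}$ is connected, and the paper itself invokes the same inequality from \cite{Eymard2000} elsewhere) gives $\sum_K m_K w_K^2\leq 2C_PM_E/\lambda^2$, and finite-dimensionality converts this into the sup bound at the price of a factor $(\min_K m_K)^{-1/2}$. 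Both routes produce a constant $R$ that degenerates as the mesh is refined, which is harmless since the lemma only claims mesh-dependence; your version outsources the connectivity bookkeeping to a quotable lemma, while the paper's version keeps the argument self-contained and makes transparent exactly which mesh quantities (transmissibilities and path lengths) enter $R$. Your bookkeeping of the boundary edges --- $D_{K,\sigma}(\boldsymbol{\psi}-\boldsymbol{\psi}^D)=-w_K$ on $\mathcal{E}^D$ because $\boldsymbol{\psi}$ and $\boldsymbol{\psi}^D$ share the boundary datum $\psi^D_\sigma$, and $0$ on $\mathcal{E}^N$ --- is the same as what the paper implicitly uses and is the one point that genuinely needed care; you got it right.
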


\begin{proof}
As the entropy contributions of anion vacancies and of the relative entropies for electrons and holes are non-negative, the bound on ${\mathbb E}_\T(\mathbf{X})$ directly implies a bound on the electric energy,
$$
\frac{\lambda^2}{2}\sum_{ \sigma \in \mathcal{E} } \tau_\sigma (D_\sigma( \boldsymbol{\psi} - \boldsymbol{\psi}^D ))^2 \leq M_E.
$$
For each edge $\sigma\in  \E^D$, such that $\sigma\in \E_K$, we deduce a bound on $|\psi_K-\psi_K^D|$ depending on $M_E$, $\lambda$ and on the mesh. The same bound applies to $|(\psi_K-\psi_K^D) - (\psi_L-\psi_L^D)|$ for any interior edge $\sigma = K|L$. From these bounds, and by using the connectedness of the mesh and the finite number of control volumes one can inductively get a uniform finite bound for all $(\psi_K-\psi_K^D)_{K\in\mathcal{T}}$.
\end{proof}
\begin{lemma}\label{lem.bounds.anions}
Assume that there exists $M_{D}>0$ such that ${\mathbb D}_{\T,a}(\mathbf{X})\leq M_{D}$ and that there also exists ${\bar n}\in (0,1)$, such that
\begin{align}
\frac{1}{| \mathbf{\Omega}|}\sum_{K\in\T_{\textrm{intr}}}m_K n_{a,K}={\bar n}. \label{eq:lem.mass-conserv-a}
\end{align}
Then, there exists some $R>0$ depending on $M_{D}$, ${\bar n}$ and $\T$,  such that
\begin{equation}\label{bounds.phia}
-R\leq \varphi_{a,K}\leq R, \quad \forall K\in\T_{\textrm{intr}}.
\end{equation}
\end{lemma}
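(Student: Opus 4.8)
The plan is to obtain the bound on $\boldsymbol{\varphi}_a$ from a two-sided bound on the vacancy densities, keeping $n_{a,K}=\mathcal{F}_a(z_a(\varphi_{a,K}-\psi_K))$ inside a fixed compact subinterval of $(0,1)$. This suffices because inverting the state equation gives $z_a\varphi_{a,K}=z_a\psi_K+\mathcal{F}_a^{-1}(n_{a,K})$, with $\mathcal{F}_a^{-1}$ bounded on compact subintervals of $(0,1)$ (as $\mathcal{F}_a\colon\R\to(0,1)$ is an increasing $C^1$-diffeomorphism), and because in the setting where this lemma is applied the electric potential is controlled: the entropy is bounded together with the dissipation, so Lemma~\ref{lem.bounds.psi} gives $|\psi_K-\psi_K^D|\le R_\psi$, hence a bound on $|\psi_K|$. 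To fix the density level, observe that $0<n_{a,K}<1$ and \eqref{eq:lem.mass-conserv-a} force the mean of $(n_{a,K})_K$ over $\mathbf{\Omega}_{\textrm{intr}}$ to equal $\bar n':=\bar n\,|\mathbf{\Omega}|/|\mathbf{\Omega}_{\textrm{intr}}|\in(0,1)$; hence there exist cells $K^{\pm}\in\T_{\textrm{intr}}$ with $n_{a,K^+}\ge\bar n'$ and $n_{a,K^-}\le\bar n'$.

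The densities are then propagated along the (finite and connected) cell-adjacency graph of $\T_{\textrm{intr}}$. On an edge $\sigma=K|L$ the dissipation bound reads $\overline{n}_{a,\sigma}(D_\sigma\boldsymbol{\varphi}_a)^2\le 2M_D/(z_a^2\tau_\sigma)$; moreover $z_aD_{K,\sigma}\boldsymbol{\varphi}_a=z_aD_{K,\sigma}\boldsymbol{\psi}+(\mathcal{F}_a^{-1}(n_{a,L})-\mathcal{F}_a^{-1}(n_{a,K}))$ and, by Lemma~\ref{lemma:bound-mean-value}, $\overline{n}_{a,\sigma}$ is the explicit $B$-weighted convex combination of $n_{a,K}$ and $n_{a,L}$ whose weights are built from $D_{K,\sigma}\log\boldsymbol{n}_a$ and $Q_{a,K,\sigma}=z_aD_{K,\sigma}\boldsymbol{\psi}+\rho(n_{a,L})-\rho(n_{a,K})$, where $\rho:=\mathcal{F}_a^{-1}-\log$ is nonnegative and nondecreasing by \eqref{hyp:statistics-a}. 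For the lower bound I start at $K^+$ and sweep outward: whenever a cell with small density $n_{a,K}$ sits next to a cell whose density is bounded below by $\underline m>0$, one has $Q_{a,K,\sigma}$ bounded while $|D_{K,\sigma}\log\boldsymbol{n}_a|$ and $|D_{K,\sigma}\boldsymbol{\varphi}_a|$ are both of order $|\log n_{a,K}|$, so a short computation with $B$ gives $\overline{n}_{a,\sigma}\ge c\,\underline m/|\log n_{a,K}|$ (with $c>0$ depending on $R_\psi$ and $\T$), whence $\overline{n}_{a,\sigma}(D_\sigma\boldsymbol{\varphi}_a)^2\ge c'\,\underline m\,|\log n_{a,K}|$ and therefore $n_{a,K}\ge\exp(-C/(\underline m\,\tau_\sigma))>0$. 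Iterating over at most $\#\T_{\textrm{intr}}$ edges yields a uniform bound $n_{a,K}\ge\underline n>0$. Given this, the upper bound is immediate: $\overline{n}_{a,\sigma}\ge\min(n_{a,K},n_{a,L})\ge\underline n$, so $|D_\sigma\boldsymbol{\varphi}_a|\le(2M_D/(z_a^2\tau_\sigma\underline n))^{1/2}$, and sweeping from $K^-$ (where $\mathcal{F}_a^{-1}(n_{a,K^-})\le\mathcal{F}_a^{-1}(\bar n')$) gives $\mathcal{F}_a^{-1}(n_{a,K})\le\text{const}$, i.e.\ $n_{a,K}\le\overline n<1$, for all $K$. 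Together with the first paragraph this proves the lemma.

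The crux is the lower bound on $n_{a,K}$. Unlike the electron/hole dissipation terms, which weight $(D_\sigma\boldsymbol{\varphi}_\alpha)^2$ by a quantity that can only be small, the anion dissipation does not obviously penalise $n_a\to 0$, since the weight $\overline{n}_{a,\sigma}$ itself degenerates there. What saves the argument is that $\overline{n}_{a,\sigma}$, being a \emph{logarithmic}-type mean produced by the Bernoulli function rather than an arithmetic mean, degenerates only like $1/|\log n_a|$, while the accompanying potential jump grows like $|\log n_a|$; consequently $\mathbb{D}_{\T,a}$ does blow up when some $n_{a,K}\to 0$ next to a non-degenerate neighbour. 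Turning this heuristic into clean, uniform-in-mesh estimates from the excess chemical potential flux \eqref{eq:discrete-flux-anions}--\eqref{eq:inside-Bernoulli}, in particular controlling the $B$-weights carefully, is the main technical work; the rest of the proof is bookkeeping along paths in the mesh graph.
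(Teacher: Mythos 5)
Your proposal is correct and follows essentially the same route as the paper, which does not spell the argument out but delegates it to Lemmas 3.2 and 3.7 of \cite{Cances2020}: reduce the claim to a two-sided bound on the densities away from $0$ and $1$ (using the bound on $\boldsymbol{\psi}$ from Lemma~\ref{lem.bounds.psi}), anchor it at a cell provided by the mass constraint, and propagate cell to cell via the blow-up of the edge dissipation of the excess-chemical-potential flux. The only imprecision is the claim that $Q_{a,K,\sigma}$ is ``bounded'' in the lower-bound sweep --- it need not be bounded above when $n_{a,L}$ is close to $1$, since $\rho(n_{a,L})$ may be large --- but only the lower bound on $Q_{a,K,\sigma}$ actually enters that step, so the argument is unaffected.
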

Let us first note that due to hypothesis \eqref{hyp:statistics-a} on ${\mathcal F}_a$,  the result stated in Lemma \ref{lem.bounds.anions} is equivalent to the fact that there exists an $\varepsilon \in (0,1)$ satisfying
$$
\varepsilon \leq n_{a,K}\leq 1-\varepsilon,  \quad \forall K\in\T_{\textrm{intr}}.
$$
This result is a direct consequence of \cite[Lemma 3.2]{Cances2020}. Its proof follows the main lines of the proof of Lemma 3.7 in \cite{Cances2020} and is left to the reader.
Lastly, we prove bounds on the quasi Fermi potentials of electric charge carriers.
\begin{lemma}\label{lem.bounds.phialpha}
Let $\alpha \in \{n, p\}$.
Assume that there exists $M_E>0$, such that ${\mathbb E}_\T(\mathbf{X})\leq M_E$ and $M_{D}>0$ such that ${\mathbb D}_{\T,\alpha}(X)\leq M_{D}$. Then, there exists some $R>0$ depending on $M_E$, $M_D$, $\mathbf{\Omega}$, $\T$, $\psi^D$, $\varphi^D$, such that
\begin{equation}\label{bounds.phialpha}
-R\leq \varphi_{\alpha,K}, \leq R, \quad \forall K\in\T.
\end{equation}
\end{lemma}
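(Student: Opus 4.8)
The plan is to reduce the bound on $\varphi_{\alpha,K}$ to two-sided bounds on the cell densities $n_{\alpha,K}$ together with the bound on $\boldsymbol\psi$ already available from Lemma~\ref{lem.bounds.psi}. Indeed, the state equation \eqref{eq:discrete-state-eq-m} and $z_\alpha^2=1$ give $\varphi_{\alpha,K}=\psi_K+z_\alpha\mathcal{F}_\alpha^{-1}(n_{\alpha,K})$, so it suffices to show $N_{\min}\le n_{\alpha,K}\le N_{\max}$ for positive constants depending only on the allowed data, since $\mathcal{F}_\alpha^{-1}$ is continuous and monotone. The upper bound is the easy half: as every term of $\mathbb{E}_\mathcal{T}(\mathbf{X})$ is non-negative, $\delta\sum_K m_K H_\alpha(n_{\alpha,K},n_{\alpha,K}^D)\le M_E$, and applying Lemma~\ref{lemma:bound-primitive-argument}(i) with $y_0=y_\alpha^D$ (recall $n_{\alpha,K}^D\le y_\alpha^D$ by \eqref{def:yalphaD}) and $\varepsilon=1$ yields $\sum_K m_K n_{\alpha,K}\le c_{y_\alpha^D,1}|\mathbf{\Omega}|+M_E/\delta$, hence $n_{\alpha,K}\le N_{\max}:=(c_{y_\alpha^D,1}|\mathbf{\Omega}|+M_E/\delta)/\min_{K}m_K$. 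Likewise $|\psi_K|\le R_\psi$ for some $R_\psi$ depending on $M_E$, $\lambda$, $\mathcal{T}$, $\psi^D$ by Lemma~\ref{lem.bounds.psi}. The whole difficulty is the lower bound $n_{\alpha,K}\ge N_{\min}>0$, which has to come from the dissipation bound $\mathbb{D}_{\mathcal{T},\alpha}(\mathbf{X})\le M_D$.

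For this I would use the precise algebraic form of the Sedan flux. Starting from \eqref{eq:interface-densities_np}, using $B(-Q)=e^{Q}B(Q)$ for the Bernoulli function \eqref{eq:bernoulli}, and $z_\alpha^2=1$, a direct computation gives, for an interior face $\sigma=K|L$,
\[
\overline{n}_{\alpha,\sigma}\,(D_\sigma\boldsymbol{\varphi}_\alpha)^2
= B\bigl(Q_{\alpha,K,\sigma}\bigr)\,\bigl(e^{Q_{\alpha,K,\sigma}}n_{\alpha,L}-n_{\alpha,K}\bigr)\,\log\!\Bigl(\frac{e^{Q_{\alpha,K,\sigma}}n_{\alpha,L}}{n_{\alpha,K}}\Bigr),
\]
and the analogous identity with $n_{\alpha,\sigma}^D$ in place of $n_{\alpha,L}$ on a Dirichlet face. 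The crucial observation is that $Q_{\alpha,K,\sigma}$ is a priori bounded: writing $z_\alpha\varphi_{\alpha,K}-\log n_{\alpha,K}=z_\alpha\psi_K+(\mathcal{F}_\alpha^{-1}(n_{\alpha,K})-\log n_{\alpha,K})$, the first term is controlled by $|\psi_K|\le R_\psi$, while $x\mapsto\mathcal{F}_\alpha^{-1}(x)-\log x$ is non-negative (because $\mathcal{F}_\alpha(\eta)\le e^\eta$ by \eqref{hyp:statistics-n-p}) and extends continuously to $[0,N_{\max}]$, hence is bounded on the range of the densities; and the Dirichlet traces $z_\alpha\varphi_\sigma^D-\log n_{\alpha,\sigma}^D$ are bounded since $\varphi^D,\psi^D\in W^{1,\infty}$ force $n_{\alpha,\sigma}^D$ to lie between two fixed positive constants. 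Consequently $|Q_{\alpha,K,\sigma}|\le Q_{\max}$ on all relevant faces, so $B(Q_{\alpha,K,\sigma})\ge B_{\min}:=B(Q_{\max})>0$, and $e^{Q_{\alpha,K,\sigma}}$ is pinched between two fixed positive constants.

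The lower bound on densities then follows from the identity by a propagation argument. With $s:=e^{Q_{\alpha,K,\sigma}}n_{\alpha,L}$ (resp.\ $e^{Q_{\alpha,K,\sigma}}n_{\alpha,\sigma}^D$) and $a:=n_{\alpha,K}$, one has the elementary bound $(s-a)\log(s/a)\ge \tfrac14\,s_0\,|\log a|$ whenever $s\ge s_0$ and $a$ is small enough (below a threshold depending on $s_0$). Plugging this into the identity and using that a single face contributes at most $\mathbb{D}_{\mathcal{T},\alpha}(\mathbf{X})\le M_D$ (all terms being non-negative), we see that $n_{\alpha,K}$ must be bounded below on any face where the opposite value is itself bounded below. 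I would then enumerate the cells $K_1,\dots,K_{N}$ (a finite set) so that $K_1$ carries a face in $\mathcal{E}^D$ and, for $j\ge2$, $K_j$ either carries a face in $\mathcal{E}^D$ or shares a face with some $K_i$, $i<j$ — possible since $\mathbf{\Omega}$ is connected. On a Dirichlet face $n_{\alpha,\sigma}^D$ is a fixed positive constant (base case); in the inductive step $n_{\alpha,K_i}\ge c_i>0$ is already known. In either case the displayed estimate forces $n_{\alpha,K_j}\ge c_j:=\min(\eta_j,\exp(-C_jM_D))>0$ with $\eta_j,C_j$ depending on the previously obtained bounds, $B_{\min}$, the transmissibilities, and $\delta,\nu$; finiteness of $\mathcal{T}$ yields a uniform $N_{\min}>0$. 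Combining $N_{\min}\le n_{\alpha,K}\le N_{\max}$ with $|\psi_K|\le R_\psi$ in $\varphi_{\alpha,K}=\psi_K+z_\alpha\mathcal{F}_\alpha^{-1}(n_{\alpha,K})$ gives the claim.

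The main obstacle is exactly the density lower bound. The correct mechanism — the dissipation across a face blows up like $|\log n_{\alpha,K}|$ when the density degenerates next to a non-degenerate value — rests on the exact form of the Sedan flux and on the a priori control of $Q_{\alpha,K,\sigma}$ through the Poisson equation (via Lemma~\ref{lem:discrete-poisson} and Lemma~\ref{lem.bounds.psi}) and the density upper bound; a crude geometric-mean estimate of $(s-a)\log(s/a)$ would only give a dissipation lower bound of order one, not a blow-up, and would therefore not close the induction.
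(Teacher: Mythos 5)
Your argument is correct, but it takes a genuinely different route from the paper's. The paper never passes through density bounds: it rewrites the per-edge dissipation as a function $\mathcal{D}_\alpha(x,y,a,b)$ of the scaled quasi Fermi and electric potentials at the two ends of an edge, proves in an appendix lemma that $\Upsilon_{\overline{\Phi},\overline{\Psi}}(x)=\inf\{\mathcal{D}_\alpha(x,y,a,b)\}$ over compact ranges of $y,a,b$ tends to $+\infty$ as $x\to\pm\infty$, and then propagates boundedness of $\varphi_{\alpha,K}$ from the Dirichlet boundary using only the dissipation bound and the $\boldsymbol{\psi}$-bound of Lemma~\ref{lem.bounds.psi}. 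You instead split the work: the entropy bound plus Lemma~\ref{lemma:bound-primitive-argument}(i) caps the densities from above, this cap (together with the $\boldsymbol{\psi}$-bound) yields an a priori bound on $Q_{\alpha,K,\sigma}$, and the explicit Sedan identity $\overline{n}_{\alpha,\sigma}(D_\sigma\boldsymbol{\varphi}_\alpha)^2=B(Q)(e^{Q}n_{\alpha,L}-n_{\alpha,K})\log(e^{Q}n_{\alpha,L}/n_{\alpha,K})$ then shows the per-edge dissipation blows up like $|\log n_{\alpha,K}|$ when a density degenerates next to a non-degenerate value, which closes the same cell-by-cell induction from $\mathbf{\Gamma}^D$. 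Both proofs share the skeleton (Lemma~\ref{lem.bounds.psi}, non-negativity of each edge contribution, propagation over the finite connected mesh); what yours buys is a self-contained and quite quantitative mechanism that avoids the appendix coercivity lemma, at the price of invoking the entropy bound a second time (which the hypotheses permit), whereas the paper's $\Upsilon$ handles both directions $x\to\pm\infty$ uniformly without any density upper bound. One small point to tighten: the boundedness of $x\mapsto\mathcal{F}_\alpha^{-1}(x)-\log x$ on $(0,N_{\max}]$ does not follow from $\mathcal{F}_\alpha(\eta)\leq e^{\eta}$ alone (that only gives non-negativity); you also need $\mathcal{F}_\alpha'\leq\mathcal{F}_\alpha$ from \eqref{hyp:statistics-n-p}, which makes $\eta\mapsto\eta-\log\mathcal{F}_\alpha(\eta)$ non-decreasing and hence bounded above by its value at $\mathcal{F}_\alpha^{-1}(N_{\max})$.
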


\begin{proof}
In order to prove Lemma \ref{lem.bounds.phialpha}, we will still stay close to the proof of Lemma 3.7 in  \cite{Cances2020}. It needs an adaptation of Lemma 3.2 in \cite{Cances2020} due to the different hypotheses on the statistics function ${\mathcal F}_{\alpha}$ and the different kind of boundary conditions.

Let us first rewrite ${\mathbb D}_{\T,\alpha}(\mathbf{X})$ by using the reformulation of the fluxes \eqref{eq:reformulated-fluxes} based on the definition \eqref{eq:interface-densities_np} of  ${\overline n}_{\alpha,\sigma}$
$$
{\mathbb D}_{\T,\alpha}(\mathbf{X})=- \frac{ \delta}{2 \nu } \sum_{\sigma \in \mathcal{E}} J_{\alpha,K,\sigma}D_{K,\sigma} {\boldsymbol \varphi}_\alpha,
~
$$
where the flux discretization is defined through \eqref{eq:discrete-flux-elec-holes} and \eqref{eq:inside-Bernoulli}.
 Introducing the function 
 ${\mathcal K}_\alpha : \R\times\R \to \R$ defined by
$$
{\mathcal K}_\alpha(x,a)=\log ({\mathcal F}_\alpha(x-a))-x, \quad \forall (x,a) \in \R\times\R,
$$
we note that
$$
Q_{\alpha,K,\sigma}={\mathcal K}_\alpha(z_\alpha \varphi_{\alpha,K},z_\alpha\psi_K)-{\mathcal K}_\alpha(z_\alpha \varphi_{\alpha,K,\sigma},z_\alpha\psi_{K,\sigma}),
$$
where $\varphi_{\alpha,K,\sigma}$ and  $\psi_{K,\sigma}$ stand for $\varphi_{\alpha, L}, \psi_{L}$, if $\sigma=K|L\in \E^{\textrm{int}}$ and for $\varphi_{\sigma}^D, \psi_{\sigma}^D$, if $\sigma \in\E^D$.
Thus, ${\mathbb D}_{\T,\alpha}(\mathbf{X})$ can be rewritten as
 $$
 {\mathbb D}_{\T,\alpha}(\mathbf{X})=  \frac{ \delta}{2 \nu } \sum_{\sigma \in \mathcal{E}}  \tau_\sigma {\mathcal D}_\alpha\bigl(z_\alpha \varphi_{\alpha,K},
  z_\alpha \varphi_{\alpha,K,\sigma},z_{\alpha}\psi_K,z_\alpha\psi_{K,\sigma}\bigr),
 $$
 with  ${\mathcal D}_\alpha : \R^4 \to \R$ defined by
$$
{\mathcal D}_\alpha(x,y,a,b)=(x-y)\Bigl[ B\Bigl({\mathcal K}_\alpha(x,a)-{\mathcal K}_\alpha(y,b)\Bigr){\mathcal F}_\alpha(x-a)
- B\Bigl({\mathcal K}_\alpha(y,b)-{\mathcal K}_\alpha(x,a)\Bigr){\mathcal F}_\alpha(y-b)
 \Bigr].
$$
Following the strategy of proof of Lemma 3.7 in \cite{Cances2020}, we introduce $\Upsilon_{\overline{\Phi},\overline{\Psi}}:\R\to \R$ defined by
$$
\Upsilon_{\overline{\Phi},\overline{\Psi}}(x)=\inf \left\{{\mathcal D}_\alpha(x,y,a,b); \ -\overline{\Phi}\leq y\leq \overline{\Phi}, -\overline{\Psi}\leq a,b\leq \overline{\Psi}\right\}
$$
and we establish (see \Cref{app:technical-proof}, Lemma \ref{lem:limupsilon}) that
\begin{equation}\label{lim.upsilon}
\lim_{x\to -\infty} \Upsilon_{\overline{\Phi},\overline{\Psi}}(x)=+\infty \quad \text{and} \quad \lim_{x\to +\infty} \Upsilon_{\overline{\Phi},\overline{\Psi}}(x)=+\infty.
\end{equation}
Then, we use that the discrete values of the electrostatic potential are bounded thanks to Lemma \ref{lem.bounds.psi} and that the Dirichlet boundary conditions ensure that there exists
at least one $\varphi_{\alpha,K,\sigma}= \varphi_{\sigma}^D$ which is bounded. Lastly, the bound on $ {\mathbb D}_{\T,\alpha}(\mathbf{X})$ implies that the value $\varphi_{\alpha,K}$ is also bounded thanks to \eqref{lim.upsilon}, where this property propagates from cell to cell, such that it holds on the whole domain.
\end{proof}
\subsection{Existence of quasi Fermi potentials}\label{ssec:exthm}
Finally, we can formulate and prove the existence of discrete solutions in \Cref{thm.exresult}.
\begin{theorem}\label{thm.exresult}
For all $m\geq 1$, the finite volume scheme \eqref{eq:discrete-mass-balance}--\eqref{eq:inside-Bernoulli} for the perovskite model has at least one solution
$(\boldsymbol{\varphi}_n^m, \boldsymbol{\varphi}_p^m,\boldsymbol{\varphi}_a^m, \boldsymbol{\psi}^m)\in\R^{\theta}$
with $\theta=3\textrm{Card}(\T)+\textrm{Card}(\T_{\textrm{intr}})$.
Moreover, this solution satisfies the following $L^{\infty}$ bounds. There exists $R>0$ depending on the data and on the mesh such that
$$
-R\leq \boldsymbol{\varphi}_n^m, \boldsymbol{\varphi}_p^m,\boldsymbol{\varphi}_a^m, \boldsymbol{\psi}^m\leq R, \quad \text{for all}\; m\geq 1,
$$
holds component-wise.
\end{theorem}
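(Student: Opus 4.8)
The plan is to argue by induction on the time level. The initial entropy $\mathbb{E}_{\mathcal{T}}^{0}$ is finite because the initial densities $n_\alpha^0=\mathcal{F}_\alpha(z_\alpha(\varphi_\alpha^0-\psi))$ are bounded and the corresponding $\boldsymbol{\psi}$ exists by \Cref{lem:discrete-poisson}; assume a solution at step $m-1$ is known with $\mathbb{E}_{\mathcal{T}}^{m-1}<\infty$ and fix the time level $m$. By \Cref{lem:discrete-poisson}, the electric potential is a continuous function $\boldsymbol{\psi}=\boldsymbol{\psi}(\mathbf{X})$ of the vector $\mathbf{X}$ of unknown quasi Fermi potentials from \eqref{def.X}, so it remains to solve the discrete mass balances \eqref{eq:discrete-mass-balance}, with densities given by \eqref{eq:discrete-state-eq} and fluxes by \eqref{eq:reformulated-fluxes}. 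Summing the anion balance \eqref{eq:discrete-mass-balance-anions} over $\mathcal{T}_{\text{intr}}$ shows that any solution satisfies the conservation identity $\sum_{K\in\mathcal{T}_{\text{intr}}}m_Kn_{a,K}^m=\sum_{K\in\mathcal{T}_{\text{intr}}}m_Kn_{a,K}^{m-1}=:\mu_a$, and this constraint must be kept, since by \Cref{lem.bounds.anions} the dissipation controls $\boldsymbol{\varphi}_a$ only in conjunction with it. I therefore work on $\mathcal{M}=\{\mathbf{X}:\ \sum_{K\in\mathcal{T}_{\text{intr}}}m_Kn_{a,K}(\mathbf{X})=\mu_a\}$, a smooth hypersurface that is globally diffeomorphic to Euclidean space (fixing a cell $K_0\in\mathcal{T}_{\text{intr}}$, the value $\varphi_{a,K_0}$ is determined by the remaining quasi Fermi unknowns through the mass constraint, by monotonicity of the coupled Poisson problem). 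On $\mathcal{M}$ one of the anion balance equations becomes redundant, so, after weighting the $\{n,p\}$ blocks by $\delta/\nu$ and the anion block by $1$, the remaining equations define a continuous vector field $\mathbf{F}$ on the parameter space of $\mathcal{M}$ whose zeros are exactly the scheme solutions at step $m$; continuity of $\mathbf{F}$ follows from that of $\mathbf{X}\mapsto\boldsymbol{\psi}(\mathbf{X})$, of the statistics functions, of $B$, of the interface densities \eqref{eq:interface-densities_np}--\eqref{eq:interface-densities_a}, and of the recombination term (the logarithm in \eqref{eq:inside-Bernoulli} acts on strictly positive densities).

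The key step is the coercivity estimate
\[
\mathbf{F}(\mathbf{X})\cdot\mathbf{X}\ \geq\ \frac{1}{2\tau^m}\,\mathbb{E}_{\mathcal{T}}(\mathbf{X})+\frac{1}{2}\,\mathbb{D}_{\mathcal{T}}(\mathbf{X})-A_m ,
\]
where $\mathbb{E}_{\mathcal{T}},\mathbb{D}_{\mathcal{T}}$ are the discrete entropy and dissipation \eqref{eq:discrete-entropy}--\eqref{eq:discrete-dissip} of the configuration associated with $\mathbf{X}$, and $A_m$ depends only on $\mathbb{E}_{\mathcal{T}}^{m-1}$, $\tau^m$, the boundary data, $\|G\|_{L^\infty}$, the dimensionless parameters, $z_a$ and the mesh regularity $\xi$. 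It is obtained by replaying, at the level of the residual, the computation in the proof of \Cref{thm:discrete-E-D}: pairing the time-difference terms with $\mathbf{X}$ and using the convexity of $\Phi_\alpha$, $\Phi_a$ and $x\mapsto x^2$ together with a discrete integration by parts against \eqref{eq:model-poisson-discrete} produces $(\mathbb{E}_{\mathcal{T}}(\mathbf{X})-\mathbb{E}_{\mathcal{T}}^{m-1})/\tau^m$ from below; pairing the flux terms with $\mathbf{X}$, rewriting them via \eqref{eq:reformulated-fluxes} and applying Young's inequality to the cross-terms produces $\mathbb{D}_{\mathcal{T}}(\mathbf{X})$ up to the boundary remainders $\tfrac{\delta}{2\nu}\sum\tau_\sigma\overline{n}_{\alpha,\sigma}(D_\sigma\boldsymbol{\varphi}^D)^2$ and $\tfrac{z_a^2}{2}\sum\tau_\sigma\overline{n}_{a,\sigma}(D_\sigma\boldsymbol{\psi}^D)^2$; the generation/recombination term is treated exactly as there. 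These remainders and the generation contribution are absorbed into $\varepsilon\,\mathbb{E}_{\mathcal{T}}(\mathbf{X})+c_\varepsilon$ by mesh regularity and \Cref{lemma:bound-primitive-argument,lemma:relation-inverse-primitive}, after which one chooses $\varepsilon<1/\tau^m$. None of these manipulations uses that $\mathbf{X}$ solves the scheme. The only genuinely new term is the discrepancy created by working in coordinates on $\mathcal{M}$ rather than pairing directly with $\boldsymbol{\varphi}_a-\boldsymbol{\psi}^D$; using $\overline{n}_{a,\sigma}\leq1$, Cauchy--Schwarz and $0<n_{a,K}<1$, it is bounded by a constant times $\mathbb{D}_{\mathcal{T},a}(\mathbf{X})^{1/2}$ plus a bounded quantity, hence it too is absorbed into $\tfrac12\mathbb{D}_{\mathcal{T}}(\mathbf{X})$ and $A_m$.

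Granted this estimate, if $\mathbf{F}(\mathbf{X})\cdot\mathbf{X}\leq0$ then, since $\mathbb{E}_{\mathcal{T}},\mathbb{D}_{\mathcal{T}}\geq0$, one gets $\mathbb{E}_{\mathcal{T}}(\mathbf{X})\leq 2\tau^m A_m=:M_E$ and $\mathbb{D}_{\mathcal{T}}(\mathbf{X})\leq 2A_m=:M_D$, hence in particular $\mathbb{D}_{\mathcal{T},a}(\mathbf{X})$ and $\mathbb{D}_{\mathcal{T},\alpha}(\mathbf{X})$ from \eqref{eq:dis-diss-a}--\eqref{eq:dis-diss-alpha} are bounded. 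Because $\mathbf{X}\in\mathcal{M}$, the constraint \eqref{eq:lem.mass-conserv-a} holds with $\bar{n}=\mu_a/|\mathbf{\Omega}|\in(0,1)$ (indeed $0<n_{a,K}^{m-1}<1$ and $|\mathbf{\Omega}_{\text{intr}}|\leq|\mathbf{\Omega}|$). Applying \Cref{lem.bounds.psi,lem.bounds.anions,lem.bounds.phialpha} then yields $|\mathbf{X}|\leq R_0$ with $R_0$ depending only on $M_E$, $M_D$, the data and the mesh; by contraposition $\mathbf{F}(\mathbf{X})\cdot\mathbf{X}\geq0$ on the sphere $|\mathbf{X}|=\rho$ for any $\rho>R_0$, and the corollary of Brouwer's fixed point theorem provides a zero $\mathbf{X}^*$ of $\mathbf{F}$ in $\overline{B}(0,\rho)$. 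Together with $\boldsymbol{\psi}(\mathbf{X}^*)$ this is a solution of \eqref{eq:discrete-mass-balance}--\eqref{eq:inside-Bernoulli} at step $m$. Finally, this solution lies on $\mathcal{M}$ and satisfies $\mathbf{F}(\mathbf{X}^*)\cdot\mathbf{X}^*=0$, so the same chain of lemmas gives the announced componentwise bounds on $\boldsymbol{\varphi}_n^m,\boldsymbol{\varphi}_p^m,\boldsymbol{\varphi}_a^m$ and, via \Cref{lem.bounds.psi}, on $\boldsymbol{\psi}^m$; since $\mathbb{E}_{\mathcal{T}}^m\leq M_E<\infty$ the induction closes, and uniformity of $R$ over all $m$ follows by bounding $\mathbb{E}_{\mathcal{T}}^{m-1}$ through \Cref{cor:bounds-entropy-dissip}. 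The main obstacle is precisely this interplay between the fixed-point argument and the anion mass-conservation constraint: the constraint cannot be dropped, so the argument has to be run on $\mathcal{M}$, and one must check that the entropy--dissipation coercivity — in particular the control of the coordinate-change discrepancy by the dissipation — survives the reduction.
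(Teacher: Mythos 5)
Your overall architecture is the right one --- \Cref{lem:discrete-poisson} for $\boldsymbol{\psi}(\mathbf{X})$, a coercivity estimate of entropy--dissipation type for the residual field, the a priori bounds of \Cref{lem.bounds.psi,lem.bounds.anions,lem.bounds.phialpha}, and the Brouwer corollary \Cref{lem.Evans} --- and you correctly identify the central difficulty, namely that the anion mass constraint cannot be discarded because \Cref{lem.bounds.anions} needs \eqref{eq:lem.mass-conserv-a}. However, your mechanism for enforcing that constraint has a genuine gap. You claim that $\mathcal{M}=\{\mathbf{X}:\sum_{K\in\mathcal{T}_{\text{intr}}}m_Kn_{a,K}(\mathbf{X})=\mu_a\}$ is globally diffeomorphic to a Euclidean space by solving the constraint for one coordinate $\varphi_{a,K_0}$. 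This fails: since $\mathcal{F}_a$ takes values in $(0,1)$, the map $s\mapsto\sum_{K}m_K\mathcal{F}_a\bigl(z_a(\varphi_{a,K}-\psi_K)\bigr)$ with $\varphi_{a,K_0}=s$ and the other coordinates fixed has limits $\sum_{K\neq K_0}m_Kn_{a,K}^{\pm}$ and $m_{K_0}+\sum_{K\neq K_0}m_Kn_{a,K}^{\pm}$ as $s\to\mp\infty$ (the electrostatic potential stays in a compact set because the anionic charge is bounded), so its range is a bounded interval that need not contain $\mu_a$ --- e.g.\ when the remaining $\varphi_{a,K}$ are very negative and $\mu_a>m_{K_0}$. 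Hence $\mathcal{M}$ is not a graph over the remaining coordinates, the reduced field $\mathbf{F}$ is not defined on all of $\R^N$, and \Cref{lem.Evans} cannot be invoked as stated. A second, independent problem is the ``coordinate-change discrepancy'': on $\mathcal{M}$ the eliminated residual equals minus the sum of the other anion residuals, so the discrepancy between $\mathbf{F}\cdot\mathbf{X}_{\mathrm{red}}$ and the full pairing carries a factor $(\varphi_{a,K_0}-\psi_{K_0}^D)$, which is \emph{not} bounded on $\mathcal{M}$ (the constraint controls only the mean density, not individual potentials). Your claimed bound by $C\,\mathbb{D}_{\mathcal{T},a}(\mathbf{X})^{1/2}+C$ via $\overline{n}_{a,\sigma}\leq 1$ and Cauchy--Schwarz therefore does not follow, and the absorption into $\tfrac12\mathbb{D}_{\mathcal{T}}$ is unjustified.

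The paper resolves the same obstruction differently and more robustly: it keeps the full vector field $\mathbf{P}_m$ on $\R^{\theta_{\mathbf{X}}}$, establishes $\mathbf{P}_m(\mathbf{X})\cdot\mathbf{X}\geq\tfrac12\mathbb{E}_{\mathcal{T}}(\mathbf{X})+\tau^m\mathbb{D}_{\mathcal{T}}(\mathbf{X})-M$ exactly as in \Cref{thm:discrete-E-D} (no manifold, hence no discrepancy term), and then regularizes, $\mathbf{P}_m^{\mu}=\mathbf{P}_m+\mu\,\mathbf{X}$, so that the sign condition of \Cref{lem.Evans} holds trivially on the sphere of radius $\sqrt{M/\mu}$. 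The mass constraint is then recovered \emph{a posteriori} and only approximately, by testing $\mathbf{P}_m^{\mu}(\mathbf{X}^{m,\mu})=\mathbf{0}$ against $(\mathbf{0},\mathbf{0},\mathbf{1}_{\mathcal{T}_{\text{intr}}})$, which shows the mean anion density deviates from that of step $m-1$ by $O(\sqrt{M\mu})$; for $\mu$ small this suffices to apply \Cref{lem.bounds.anions}, the bounds of \Cref{lem.bounds.psi,lem.bounds.phialpha} are uniform in $\mu$, and one passes to the limit $\mu\to0$ along a subsequence. If you want to salvage your argument, replacing the manifold reduction by this penalization is the cleanest repair.
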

The discrete mass balances in \eqref{eq:discrete-mass-balance} at step $m$ constitute a nonlinear system of equations. More precisely, we can introduce a continuous vector field $\mathbf{P}_m : \R^{\theta_\mathbf{X}} \to \R^{\theta_\mathbf{X}}$ with $\theta_\mathbf{X} = 2\textrm{Card}(\T)+\textrm{Card}(\T_{\textrm{intr}})$, such that  $\mathbf{P}_m (\mathbf{X}^m)=\mathbf{0}$ is equivalent to \eqref{eq:discrete-mass-balance}, where $\mathbf{X}^m$ is defined by \eqref{def.X}, noting that we have omitted the superscript $m$ there. In order to formulate the electron and hole components of  $\mathbf{P}_m(\mathbf{X}^m)$, we put every term of the equations \eqref{eq:discrete-mass-balance-elec} and \eqref{eq:discrete-mass-balance-holes} on the left-hand side and rescale by a factor $\delta\tau^m / \nu$. The anion related components  are given by \eqref{eq:discrete-mass-balance-anions} rescaled by $\tau^m$. In order to prove Theorem~\ref{thm.exresult}, we apply a corollary of Brouwer's fixed point theorem \cite[Section 9.1]{Evans:10} to a regularized version of $\mathbf{P}_m$, and then take limits in the regularization parameter. The fixed point lemma reads as follows.
\begin{lemma}\label{lem.Evans}
Let $N\in\N $ and $\mathbf{P}: \R^N\to \R^N$ be a continuous vector field. Assume that there exists $R>0$, such that $\mathbf{P}(\mathbf{X})\cdot \mathbf{X} \geq 0$, if $\Vert \mathbf{X}\Vert =R$. Then, there exists $\mathbf{X}^{\ast}\in\R^N$ such that $\mathbf{P}(\mathbf{X}^{\ast})=\mathbf{0}$ and $\Vert \mathbf{X}\Vert \leq R$.
\end{lemma}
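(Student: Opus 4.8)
The plan is to argue by contradiction and reduce the claim to the classical Brouwer fixed point theorem on the closed ball $\overline{B}_R := \{\mathbf{X}\in\R^N : \|\mathbf{X}\|\leq R\}$, which is convex and compact. First I would suppose, contrary to the assertion, that $\mathbf{P}$ has no zero in $\overline{B}_R$, so that $\mathbf{P}(\mathbf{X})\neq\mathbf{0}$ for every $\mathbf{X}$ with $\|\mathbf{X}\|\leq R$. Under this assumption the normalized field
\[
\mathbf{F}(\mathbf{X}) := -R\,\frac{\mathbf{P}(\mathbf{X})}{\|\mathbf{P}(\mathbf{X})\|}
\]
is well defined and continuous on $\overline{B}_R$, because the denominator never vanishes there and $\mathbf{P}$ is continuous by hypothesis; moreover $\|\mathbf{F}(\mathbf{X})\|=R$ for all such $\mathbf{X}$, so $\mathbf{F}$ maps $\overline{B}_R$ into itself.

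Next I would apply Brouwer's theorem to obtain a fixed point $\mathbf{X}^\ast\in\overline{B}_R$ of $\mathbf{F}$, i.e.\ $\mathbf{F}(\mathbf{X}^\ast)=\mathbf{X}^\ast$. Since $\|\mathbf{F}(\mathbf{X}^\ast)\|=R$ by construction, the fixed point is forced onto the sphere, $\|\mathbf{X}^\ast\|=R$, which is precisely where the hypothesis supplies sign information. Taking the Euclidean inner product of the fixed-point identity with $\mathbf{X}^\ast$ then yields
\[
R^2 = \|\mathbf{X}^\ast\|^2 = \mathbf{X}^\ast\cdot\mathbf{F}(\mathbf{X}^\ast) = -R\,\frac{\mathbf{X}^\ast\cdot\mathbf{P}(\mathbf{X}^\ast)}{\|\mathbf{P}(\mathbf{X}^\ast)\|},
\]
whence $\mathbf{X}^\ast\cdot\mathbf{P}(\mathbf{X}^\ast) = -R\,\|\mathbf{P}(\mathbf{X}^\ast)\| < 0$, the strict sign coming from $R>0$ together with the non-vanishing of $\mathbf{P}(\mathbf{X}^\ast)$. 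This directly contradicts the assumed inequality $\mathbf{P}(\mathbf{X})\cdot\mathbf{X}\geq 0$ valid on $\|\mathbf{X}\|=R$, so the contradiction hypothesis must fail and $\mathbf{P}$ vanishes at some $\mathbf{X}^\ast$ with $\|\mathbf{X}^\ast\|\leq R$.

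I do not expect any genuine analytic obstacle: once Brouwer's theorem is taken as a black box, the entire content lies in the choice of the outward-pointing normalization $\mathbf{F}$, engineered so that any fixed point automatically has norm $R$ and thus collides with the coercivity-type boundary condition. The only point deserving care is verifying that $\mathbf{F}$ is a continuous self-map of $\overline{B}_R$, which rests entirely on the non-vanishing of $\mathbf{P}$ furnished by the contradiction hypothesis; no convexity, smoothness, or further structure of $\mathbf{P}$ beyond continuity is used. I would also flag that the stated conclusion is meant to read $\|\mathbf{X}^\ast\|\leq R$ (the displayed statement writes $\|\mathbf{X}\|\leq R$, evidently a typo for the norm of the zero just produced).
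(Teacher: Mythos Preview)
Your argument is correct and is in fact the standard proof of this result. Note, however, that the paper does not give its own proof of this lemma: it is stated as a known corollary of Brouwer's fixed point theorem, with a reference to \cite[Section 9.1]{Evans:10}. The proof you have written is precisely the one found there, so your approach coincides with the cited source.
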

\begin{proof}[Proof of Theorem \ref{thm.exresult}]
First, we prove the existence of quasi Fermi potentials $\mathbf{X}$, where for the sake of readability, we omit the superscript $m$.  We recall that \Cref{lem:discrete-poisson} guarantees the existence of a continuous and uniquely determined map $\mathbf{X} \mapsto \boldsymbol{\psi}(\mathbf{X})$ solving the nonlinear Poisson equation \eqref{eq:model-poisson-discrete} for any given quasi Fermi potentials $\mathbf{X}$. Thus, $\mathbf{P}_m$ is well-defined and continuous.
The scalar product $\mathbf{P}_m(\mathbf{X})\cdot \mathbf{X}$ is given by
\begin{align*}
    \mathbf{P}_m(\mathbf{X})\cdot \mathbf{X} =&
    ~
    \sum_{\alpha \in \{ n, p \} } \sum_{K \in \mathcal{T}} \left( \delta z_\alpha m_K (n_{\alpha,K} - n_{\alpha,K}^{m-1})  (\varphi_{\alpha, K} - \varphi_K^D)
    ~
    +  \frac{\delta\tau^m}{\nu}  \sum_{\sigma \in \mathcal{E}_K} J_{\alpha,K, \sigma} (\varphi_{\alpha, K} - \varphi_K^D) \right)\\
    ~
    ~
    &+\frac{\delta\tau^m}{\nu} \sum_{K \in \mathcal{T}} m_K R(n_{n,K}, n_{p,K}) (\varphi_{p, K} - \varphi_{n,K}) - \frac{\delta \gamma\tau^m}{\nu} \sum_{K \in \mathcal{T}} m_K G_K (\varphi_{p, K} - \varphi_{n,K}) \\
    ~
    ~
    &+ \sum_{K \in \mathcal{T}} z_a m_K (n_{a,K} - n_{a,K}^{m-1}) (\varphi_{a, K} - \psi_K^D) + \tau^m\sum_{K \in \mathcal{T}} \sum_{\sigma \in \mathcal{E}_K} J_{a,K, \sigma}(\varphi_{a, K} - \psi_K^D).
\end{align*}
We have established the following inequality within the proof of \Cref{thm:discrete-E-D} for $\varepsilon > 0$
$$
\mathbf{P}_m(\mathbf{X})\cdot \mathbf{X}\geq (1-\varepsilon \tau^m){\mathbb E}_\T(\mathbf{X})-{\mathbb E}_\T(\mathbf{X}^{m-1}) +\tau^m {\mathbb D}_\T(\mathbf{X}) - \tau^m c_{\varepsilon, \mathbf{\Omega}, \xi},
$$
where $\mathbf{X}^{m-1}$ denotes the known solution at the previous time step $m-1$. For suitable $\varepsilon$, there exists $M > 0$, such that
$$
\mathbf{P}_m(\mathbf{X})\cdot \mathbf{X}\geq\frac{1}{2} {\mathbb E}_\T(\mathbf{X})+\tau^m {\mathbb D}_\T(\mathbf{X}) -M.
$$
Our goal is to use \Cref{lem.Evans} to show the existence of a solution at time step $t^m$. Instead of showing now the non-negativity of the scalar product $\mathbf{P}_m(\mathbf{X})\cdot \mathbf{X}$, we introduce a parameter-dependent regularization of $\mathbf{P}_m$ which satisfies the assumptions of \Cref{lem.Evans}.

For a given $\mu>0$, we define $\mathbf{P}_m^{\mu}(\mathbf{X})=\mathbf{P}_m(\mathbf{X})+\mu \mathbf{X}$, which satisfies
$$
\mathbf{P}_m^{\mu}(\mathbf{X})\cdot \mathbf{X}=\mathbf{P}_m(\mathbf{X})\cdot \mathbf{X} +\mu \Vert \mathbf{X}\Vert^2\geq \mu \Vert \mathbf{X}\Vert^2-M  \: \geq 0, \quad \text{for} \: \Vert \mathbf{X}\Vert \geq  \sqrt{M/\mu} .
$$
Then, Lemma \ref{lem.Evans} shows the existence of $\mathbf{X}^{m,\mu}\in B(\mathbf{0},\sqrt{M/\mu})$, such that $\mathbf{P}_m^{\mu}(\mathbf{X}^{m,\mu})=\mathbf{0}$.
Next, we need to show $\mathbf{X}^{m,\mu}$ is actually uniformly bounded in $\mu$. Let us check the hypotheses of Lemmas \ref{lem.bounds.psi}, \ref{lem.bounds.anions} and \ref{lem.bounds.phialpha}. We take the scalar product of $\mathbf{P}_m^{\mu}(\mathbf{X}^{m,\mu})$ with the vector $  \mathbf{V}= ({\mathbf 0}_\T, {\mathbf 0}_\T,{\mathbf 1}_{\T_{\rm intr}}) $.
Since the sum over all fluxes in the intrinsic region vanishes, we obtain
$$
\sum_{K\in\T_{\rm intr}} z_a  m_K n_{a,K}^{m,\mu}-\sum_{K\in\T_{\rm intr}}  z_a  m_K n_{a,K}^{m-1} +\mu \mathbf{X}^{m,\mu}\cdot \mathbf{V}=0,
$$
and therefore, after rescaling with the measure of $\mathbf{\Omega}$, we have
\begin{equation}\label{eq.mass}
 \left|\frac{1}{| \mathbf{\Omega}|}\sum_{K\in\T_{\rm intr}} z_a  m_K n_{a,K}^{m,\mu}-\frac{1}{| \mathbf{\Omega}|}\sum_{K\in\T_{\rm intr}} z_a  m_K n_{a,K}^{m-1}\right| \leq \frac{\mu}{| \mathbf{\Omega}|}  \Vert \mathbf{X}^{m,\mu}\Vert \Vert \mathbf{V}\Vert \leq \frac{\sqrt{M\mu}}{| \mathbf{\Omega}|}  \Vert  \mathbf{V}\Vert.
\end{equation}
But since the solution at the previous time step exists and hence is bounded, there exists $\varepsilon^{(m-1)} \in (0,1)$, such that $ \frac{1}{| \mathbf{\Omega}|} \sum_{K\in\T_{\rm intr}} m_K n_{a,K}^{m-1}\in (\varepsilon^{(m-1)}, 1-\varepsilon^{(m-1)})$. Thus, we deduce from \eqref{eq.mass} that, for $\mu$ sufficiently small, $\mathbf{X}^{m,\mu}$ satisfies for $\varepsilon^{(m)} = \varepsilon^{(m-1)}/2$
$$
\frac{1}{| \mathbf{\Omega}|} \sum_{K\in\T_{\rm intr}} m_K n_{a,K}^{m,\mu}\in (\varepsilon^{(m)}, 1-\varepsilon^{(m)}).
$$
Moreover, as $\mathbf{P}_m^{\mu}(\mathbf{X}^{m,\mu})\cdot \mathbf{X}^{m,\mu}=0\geq \frac{1}{2} {\mathbb E}_\T(\mathbf{X}^{m,\mu})+\tau^m {\mathbb D}_\T(\mathbf{X}^{m,\mu}) -M$, we
see that ${\mathbb E}_\T(\mathbf{X}^{m,\mu})$ and ${\mathbb D}_\T(\mathbf{X}^{m,\mu})$ are uniformly bounded in $\mu$ by $M$.
Hence, we can apply Lemmas \ref{lem.bounds.psi}, \ref{lem.bounds.anions}, \ref{lem.bounds.phialpha} to deduce that $\Vert \mathbf{X}^{m,\mu}\Vert$ is bounded uniformly in $\mu$ (for  $\mu$ sufficiently small). Finally, we can extract a subsequence, which converges to a limit denoted by $\mathbf{X}^m$ as $\mu$ tends to $0$. This limit satisfies $\mathbf{P}_m^{0}(\mathbf{X}^m)=\mathbf{P}_m(\mathbf{X}^m)=\mathbf{0}$. Thus, we have found quasi Fermi potentials which solve the discrete system \eqref{eq:discrete-mass-balance}.  It remains to show the existence of a uniquely determined $\boldsymbol{\psi}(\mathbf{X}^m)$ which solves \eqref{eq:model-poisson-discrete}. However, this follows from \Cref{lem:discrete-poisson}, which ends the proof of \Cref{thm.exresult}.

\end{proof}

\section{Numerical experiments}
\label{sec:numerics}
The numerical examples were performed with \texttt{ChargeTransport.jl}, a Julia package for the simulation of charge transport in semiconductors \cite{ChargeTransport}. In a first step the aim is to verify properties of the finite volume scheme \eqref{eq:discrete-mass-balance}-\eqref{eq:inside-Bernoulli} such as a special case of the entropy-dissipation inequality in \Cref{thm:discrete-E-D} as well as the spatial convergence rate. In a second step, the charge transport model \eqref{eq:model-dimless}-\eqref{eq:dimlessphotogen} is simulated for a physical meaningful set of parameters. In all simulation setups we are interested in the large time behavior of the model. For this reason, we introduce an entropy with respect to the steady state
\begin{equation}\label{eq:continuous-entropy-steady-state}
    \mathbb{E}_\infty(t) = \frac{\lambda^2}{2} \int_{\mathbf{\Omega}} |\nabla (\psi - \psi^\infty)|^2\,d\mathbf{x}
    + \int_{\mathbf{\Omega}_{\text{intr}}} H_a(n_a, n^\infty_a)\,d\mathbf{x}
    + \delta \sum_{\alpha \in \{n, p\}}
    \int_{\mathbf{\Omega}} H_\alpha(n_\alpha, n_\alpha^\infty)  \,d\mathbf{x},
\end{equation}
where $H_\alpha$ is defined in \eqref{eq:H-function}. The non-negative functional $E_{\infty}$ can be seen as a measure of the distance between a solution at time $t$ and the steady state of the model which vanishes, if and only if the solution at time $t$ and the steady state coincide almost everywhere. Furthermore, from an analytical point of view $E_{\infty}$ may help to prove the convergence of the discrete solution to the discrete steady state \cite{ChainaisHillairet2019}.
\subsection{Verifying the properties of the scheme}
Within this section we assume a one-dimensional domain $\Omega=(0,6) $ and set $\Omega_{\text{HTL}} = (0, 2), \Omega_{\text{intr}} = (2, 4), \Omega_{\text{ETL}} = (4, 6)$. We choose $513$ nodes per subdomain, resulting in a total number of $1537$ nodes with a grid spacing $h \approx \num{3.9e-3}$. The time domain is given by $[0, 80]$ which we discretize with a time step of $ \Delta t=\num{1.0e-1}$. We set the rescaled Debye length to $\lambda = 1$, the relative mobility of anion vacancies to $\nu = 1$, the relative concentration to $\delta = 1$, and the rescaled photogeneration rate to $\gamma = 1$.
\paragraph{Thermal Equilibrium boundary conditions}
Let us first study the implications of the assumptions in \Cref{rem:thermodynamic-eq}. To this end, we assume a constant doping $C = 0.1$ and no generation and recombination, i.e.\ $G = R = 0$. The Dirichlet functions \eqref{eq:Dirichlet-cond} are chosen as constant functions $\varphi^D = 0.5$ and $\psi^D = \text{arcsinh}(C/2) + 0.5$. The sinusoidal initial conditions for electrons, holes and the electric potential as well as the constant initial condition for anion vacancies along with the steady state solutions are depicted in \Cref{fig:test-case-1A-sol} on the left panel. On the right panel we show the steady state densities.
\begin{figure}[ht]
    \begin{subfigure}{0.49\textwidth}
        \hspace*{-0.5cm} \includegraphics[scale=0.42]{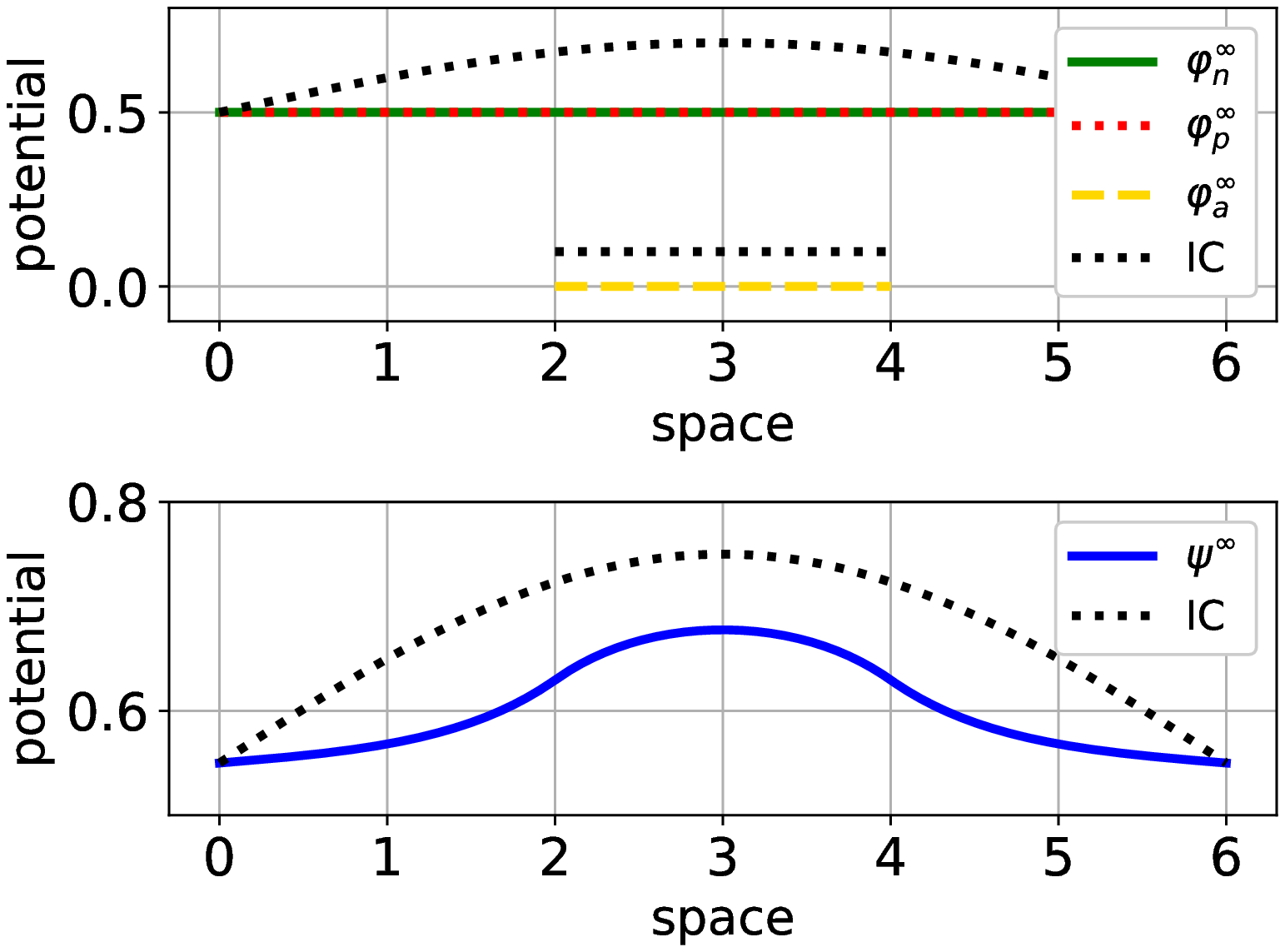}
    \end{subfigure}
    \begin{subfigure}{0.49\textwidth}
        \includegraphics[scale=0.42]{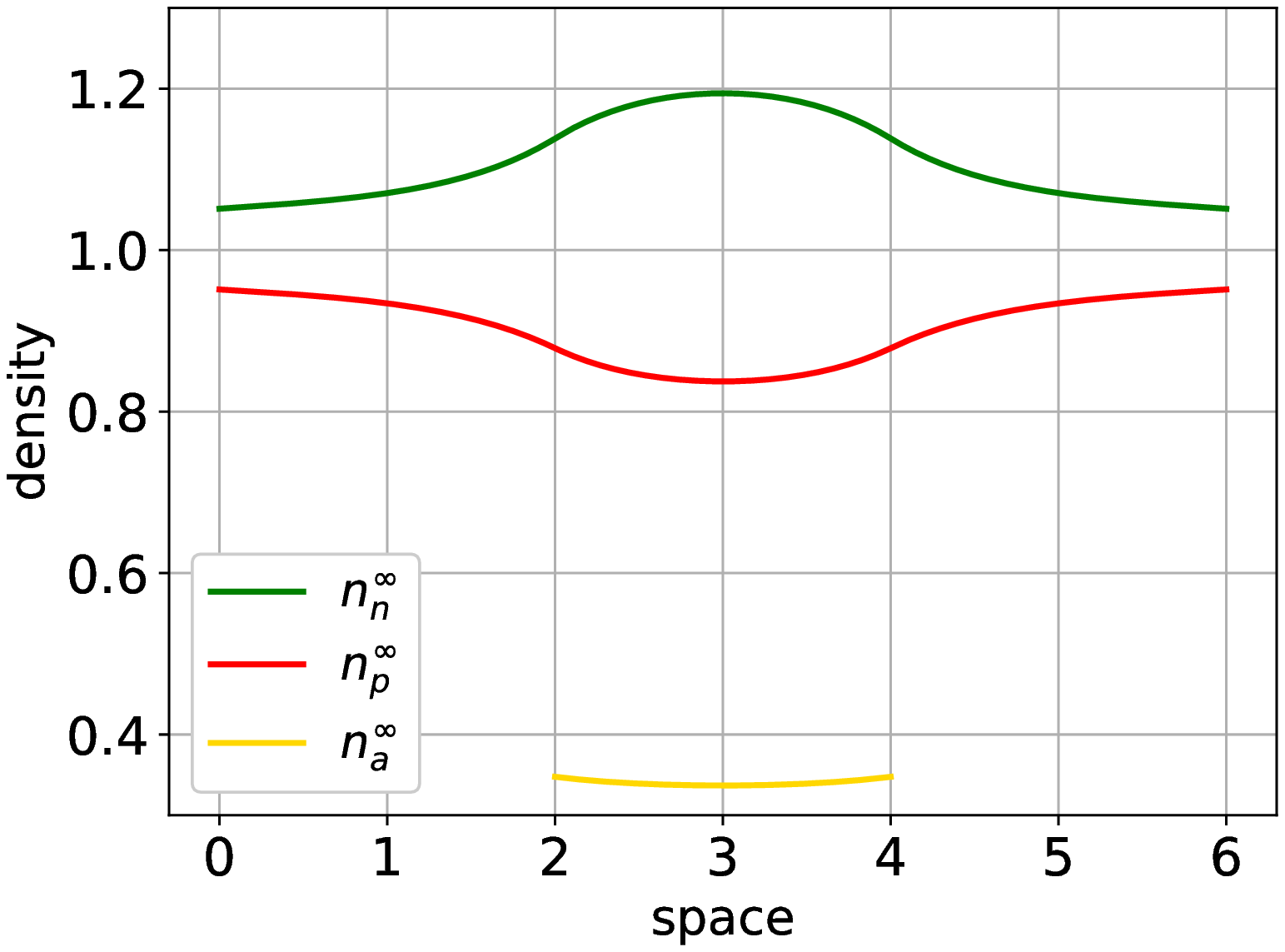}
    \end{subfigure}
    \caption{Steady state solutions $(\varphi_n^{\infty}, \varphi_p^{\infty}, \varphi_a^{\infty}, \psi^\infty)$ with the respective initial conditions as dotted lines (left) and the associated steady state densities of charge carriers (right) calculated via \eqref{eq:state-eq}.}
    \label{fig:test-case-1A-sol}
\end{figure}
Since for these specific choices, we have $ \mathbf{0} = \nabla \varphi^D = \nabla \psi^D$ and $G=0$ the discrete entropy-dissipation inequality in \Cref{thm:discrete-E-D} indicates that the relative entropy with respect to the Dirichlet boundary values \eqref{eq:discrete-entropy} does not increase in time. This result can be numerically verified, see \Cref{fig:test-case-1A-entropy}. Due to a non-constant electric potential $\psi^\infty$ we observe that the relative entropy \eqref{eq:discrete-entropy} (in blue on the left panel) levels off after an initial decrease. Furthermore, the relative entropy with respect to the steady state \eqref{eq:continuous-entropy-steady-state} (in green on the left panel of \Cref{fig:test-case-1A-entropy}) as well as the quadratic $L^2$ errors between the steady state and a solution at time $t$ (right panel) decay exponentially  with a similar slope, reaching machine precision at a similar time.
\begin{figure}[ht]
    \begin{subfigure}[b]{0.49\textwidth}
    \hspace*{-0.3cm}	 \includegraphics[scale=0.41]{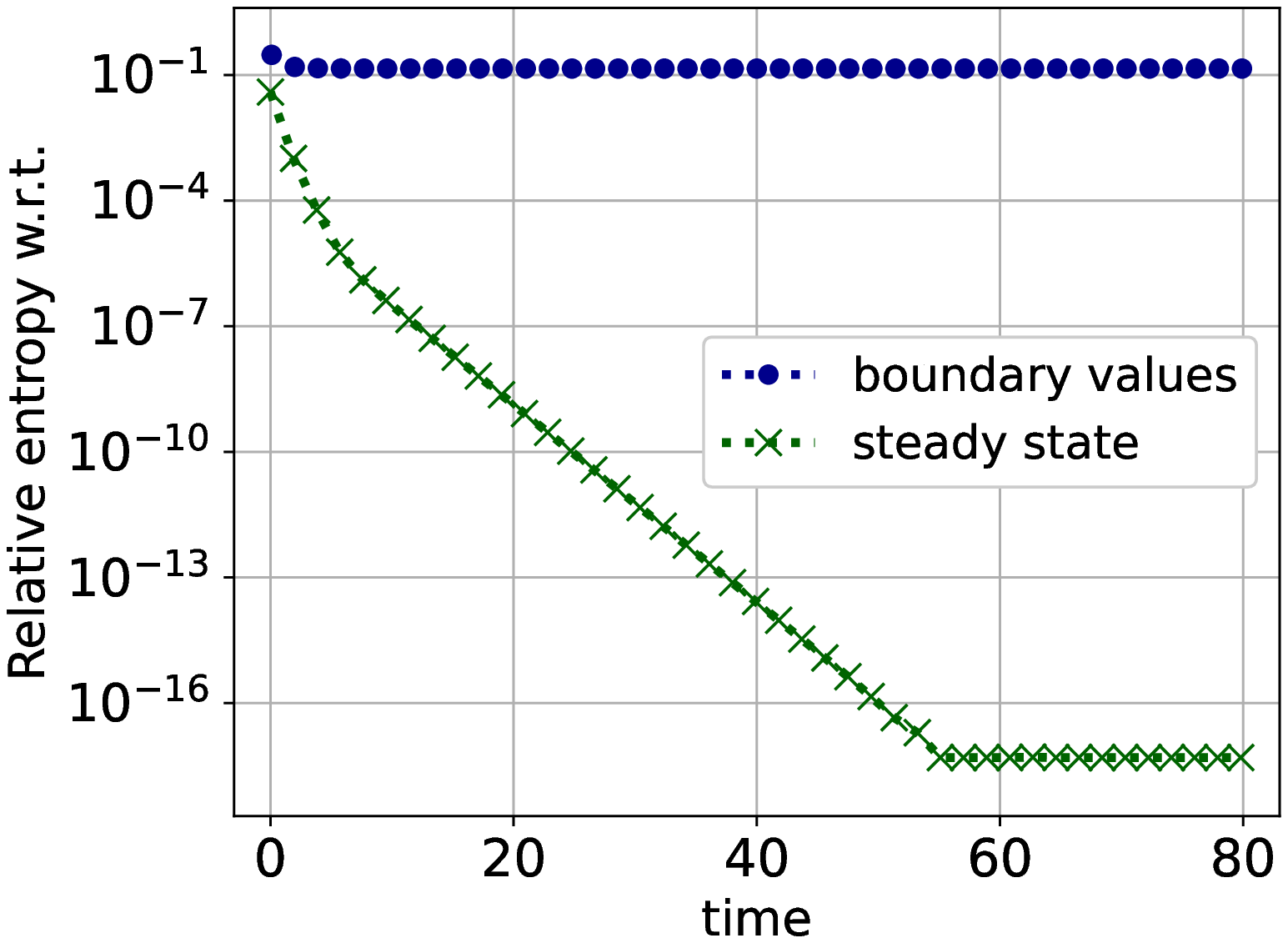}
    \end{subfigure}
    \begin{subfigure}[b]{0.49\textwidth}
        \includegraphics[scale=0.42]{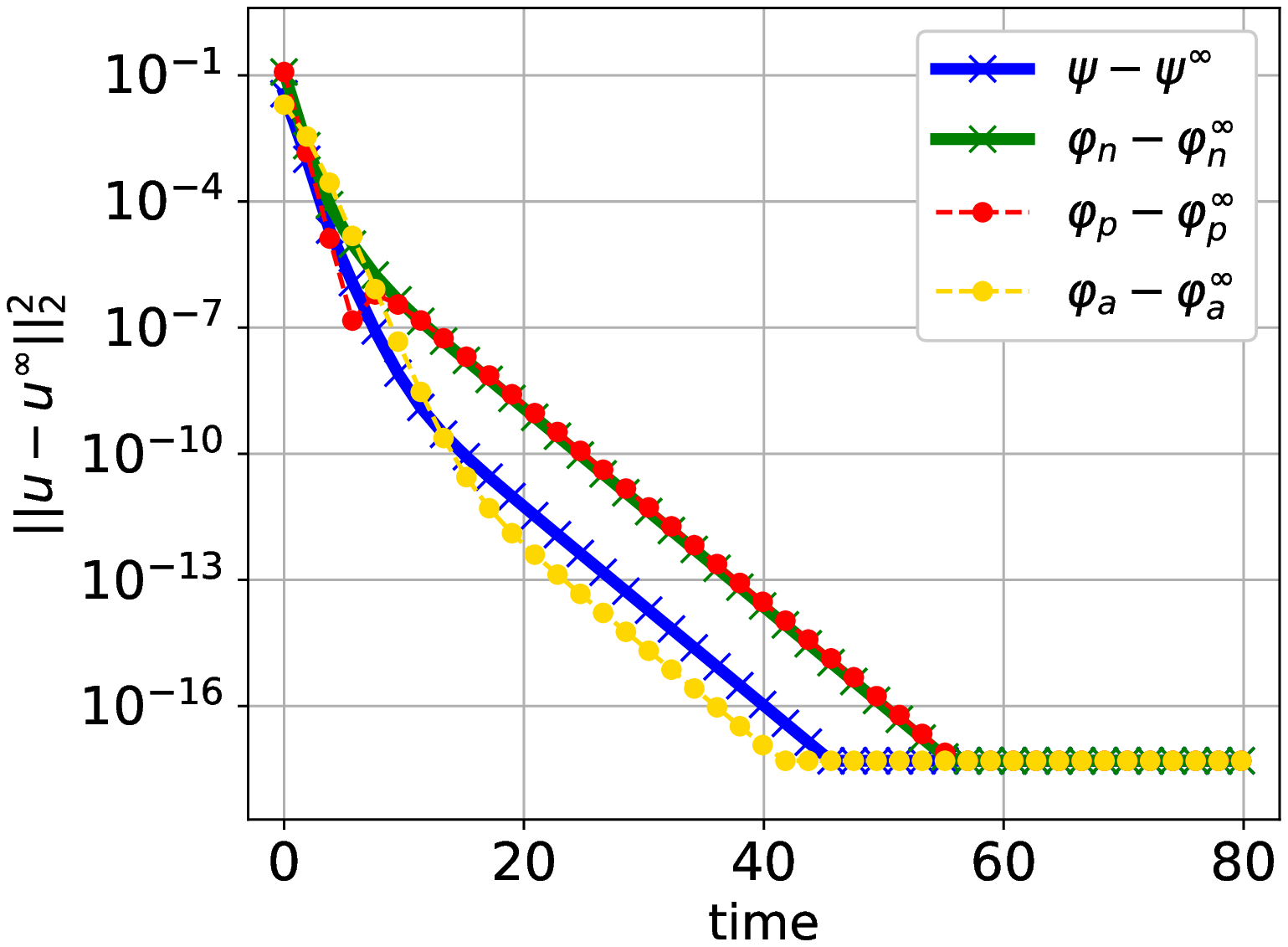}
    \end{subfigure}
    \caption{Left: Time evolution of the relative entropy with respect to the Dirichlet boundary functions \eqref{eq:continuous-entropy} as well as the relative entropy with respect to the steady state \eqref{eq:continuous-entropy-steady-state}. Right: Time evolution of the quadratic $L^2$ errors between the computed and the steady state solutions. }
    \label{fig:test-case-1A-entropy}
\end{figure}
\paragraph{Non-constant boundary values}
Next, we adjust the doping and the boundary values. Let us assume that the doping $C$ is a piecewise constant function given by $0.5$ in $\Omega_{\text{ETL}}$ and by $-0.5$ in $\Omega_{\text{HTL}} \cup \Omega_{\text{intr}}$. The boundary values are set to $\varphi|_{x = 0}^D = 1, \varphi|_{x = 6}^D = 0$, $\quad \psi|_{x = 0}^D = \text{arcsinh}(- 0.5/2) + 1, \psi|_{x = 6}^D = \text{arcsinh}(0.5/2)$. We choose quadratic initial conditions for $\varphi_n, \varphi_p, \psi$ and a constant initial condition for $\varphi_a$. The initial conditions are additionally to the steady state solutions depicted in \Cref{fig:test-case-1B-sol} as dotted lines.
\begin{figure}[ht]
    \begin{subfigure}{0.49\textwidth}
        \hspace*{-0.5cm} \includegraphics[scale=0.42]{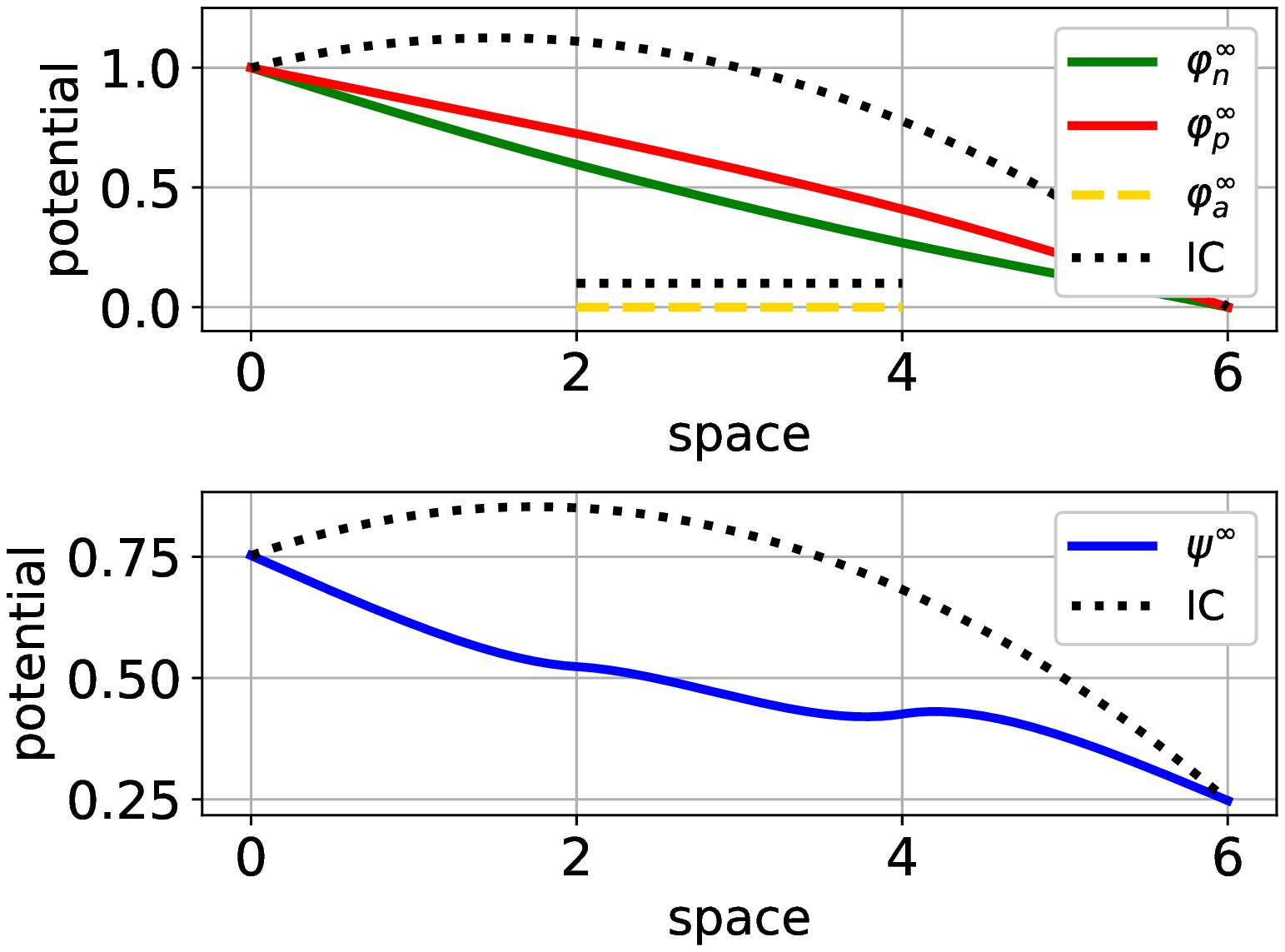}
    \end{subfigure}
    \begin{subfigure}{0.49\textwidth}
        \includegraphics[scale=0.42]{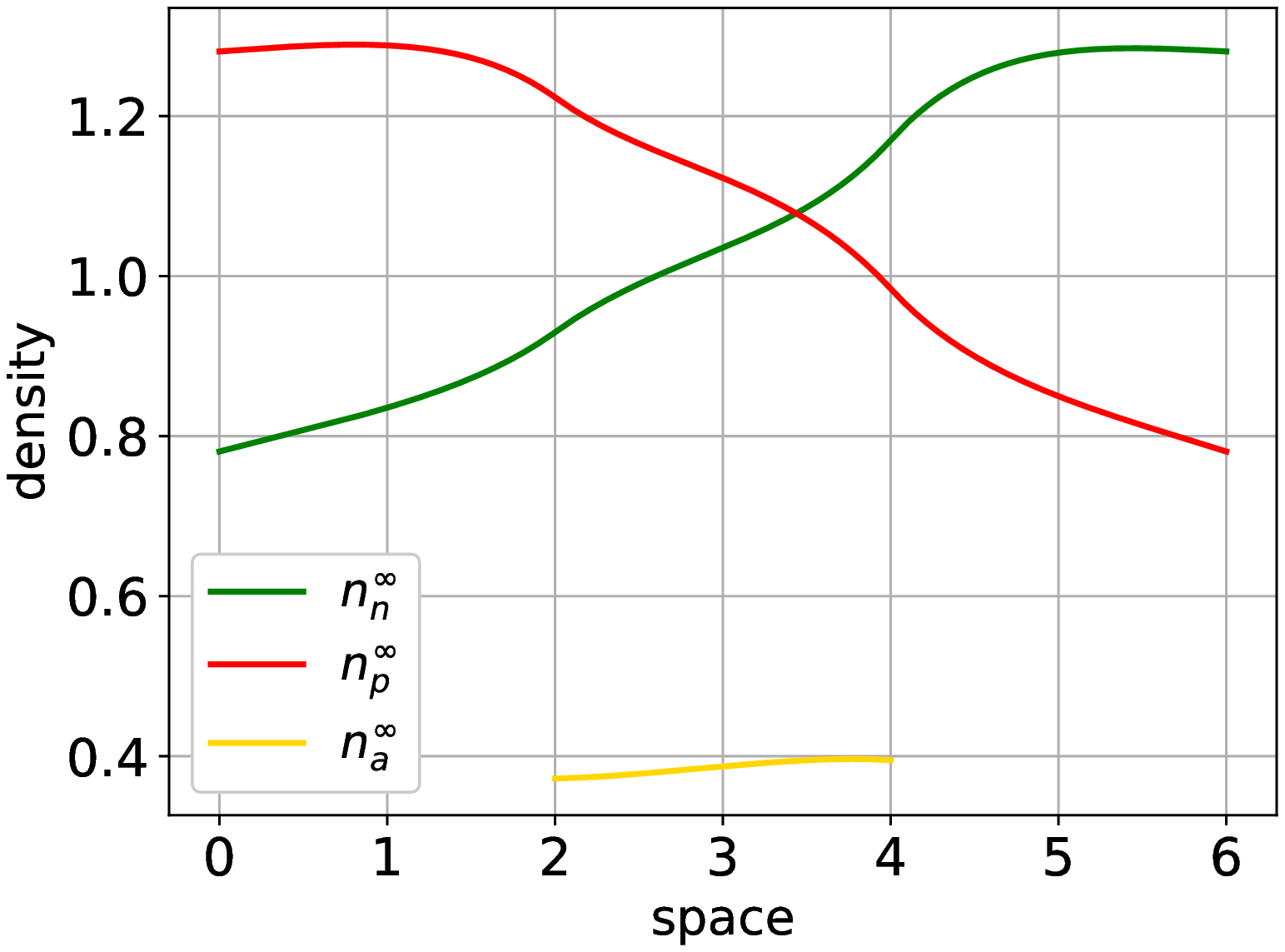}
    \end{subfigure}
    \caption{Steady state potentials with the corresponding initial conditions (left) and the associated steady state densities of charge carriers (right) calculated via the state equation \eqref{eq:state-eq}.}
    \label{fig:test-case-1B-sol}
\end{figure}
Again, the relative entropy with respect to the steady state and the quadratic $L^2$ errors decay exponentially and reach machine precision with a similar slope, see \Cref{fig:test-case-1B-entropy} in the left and middle panel.

Finally, we complete this section with an investigation of the spatial convergence behavior. Suppose $n_{*}\in\mathbb{N}$ is given, then $n = 2 \cdot 2^{n_{*} -1} + 1$ nodes are chosen in each of the three subdomain, i.e.\ in total we have $n_{\text{tot}} = 3 \cdot 2 \cdot 2^{n_{*} -1} + 1 $ nodes.
We calculate a reference solution on a grid with $n_{*} = 9$ corresponding to $1537$ nodes with a grid spacing $h_* \approx \num{3.9e-3}$. The $L^2$ errors between the solution $u_{n_*}$, for $n_{*} = 2,\ldots,8$, and the reference solution projected onto the coarser mesh evaluated at the final time are shown in \Cref{fig:test-case-1B-entropy}, right panel. Since for the final time $t_F=80$ the system is already within machine precision of the steady state, the error shown is purely due to the spatial discretization. We observe second order experimental convergence.
\begin{figure}[ht]
    \begin{subfigure}[b]{0.33\textwidth}
    \hspace*{-0.3cm}	 \includegraphics[scale=0.31]{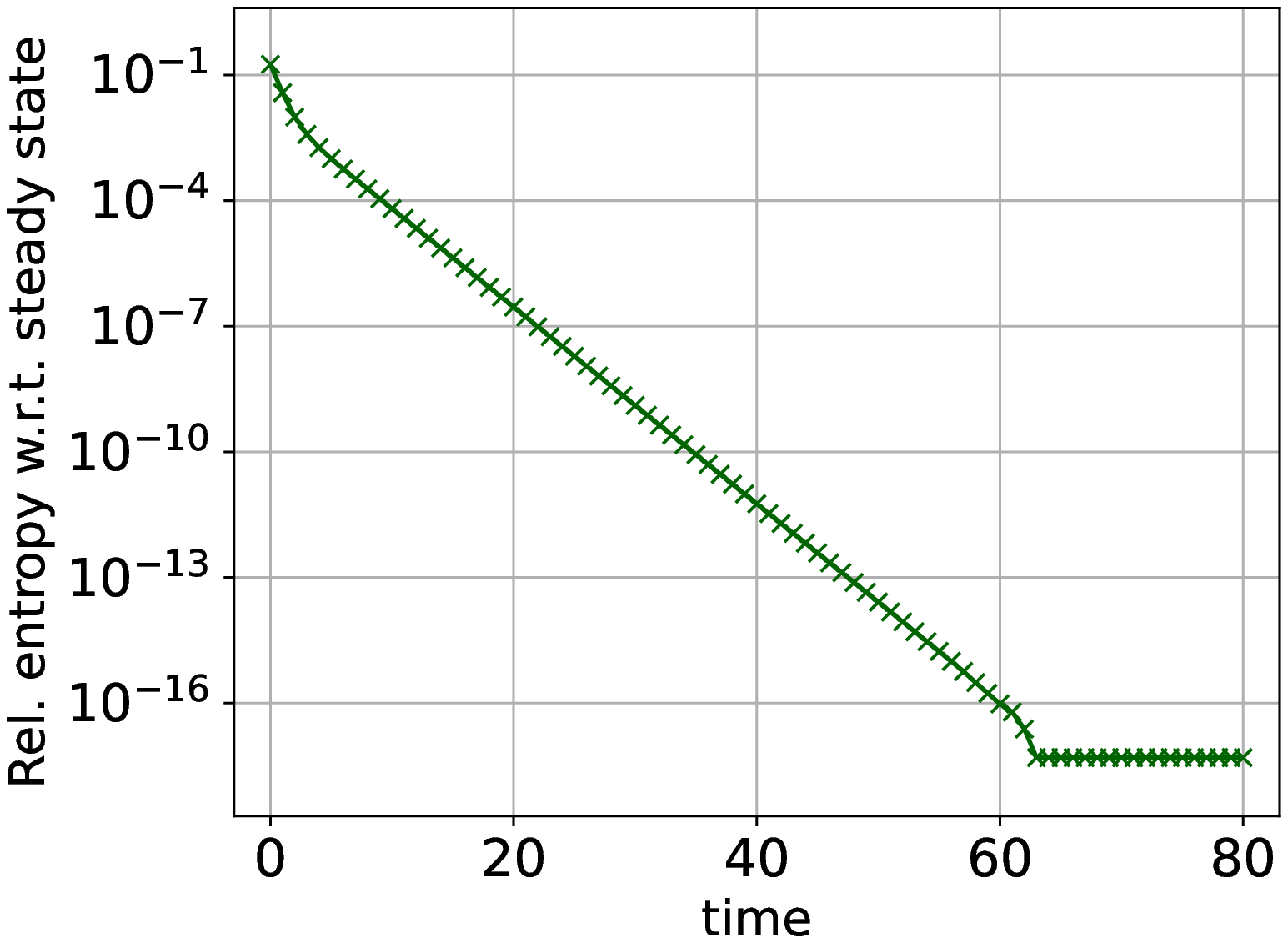}
    \end{subfigure}
    \begin{subfigure}[b]{0.33\textwidth}
        \includegraphics[scale=0.30]{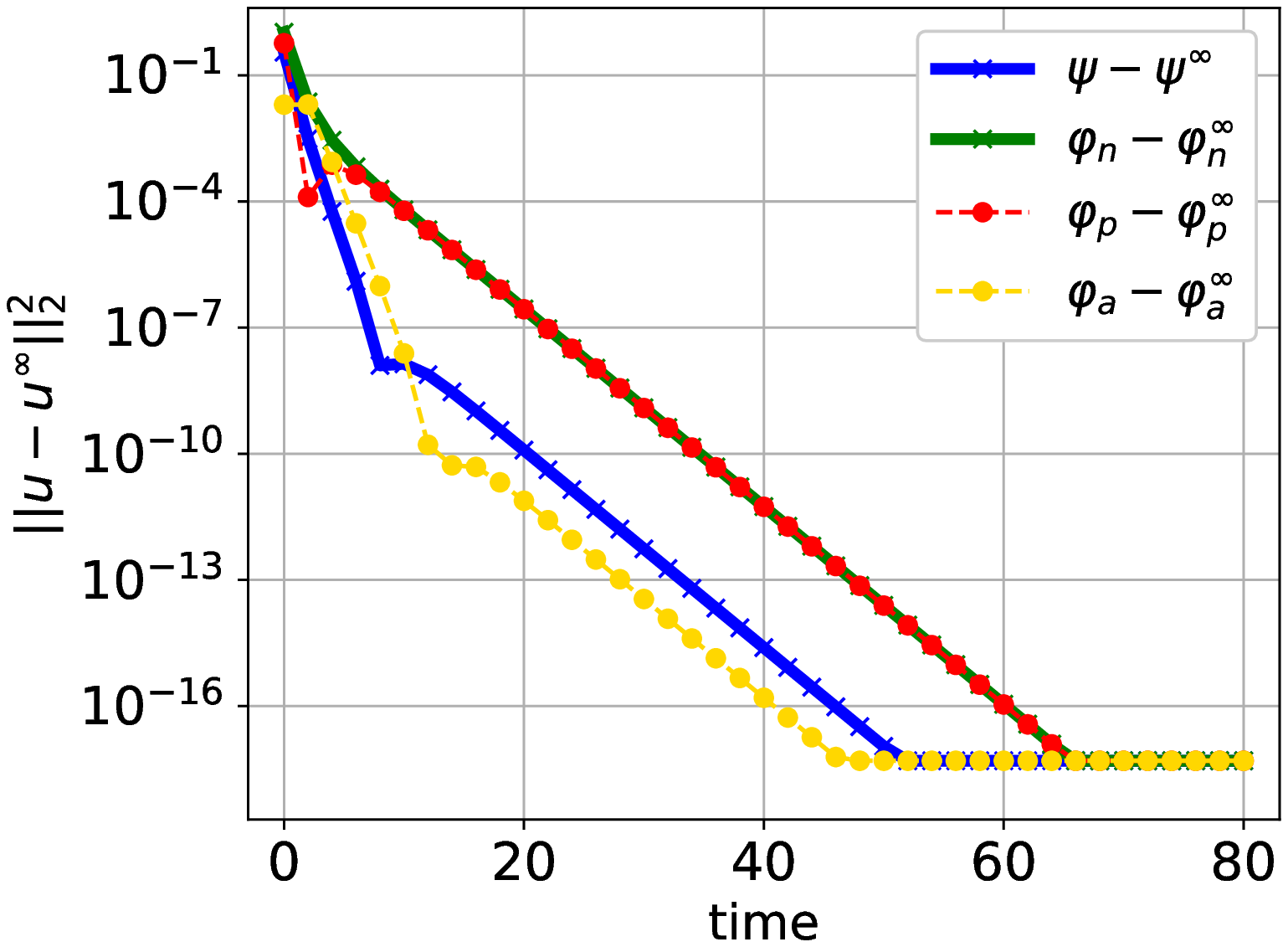}
    \end{subfigure}
    \begin{subfigure}[b]{0.33\textwidth}
        \includegraphics[scale=0.30]{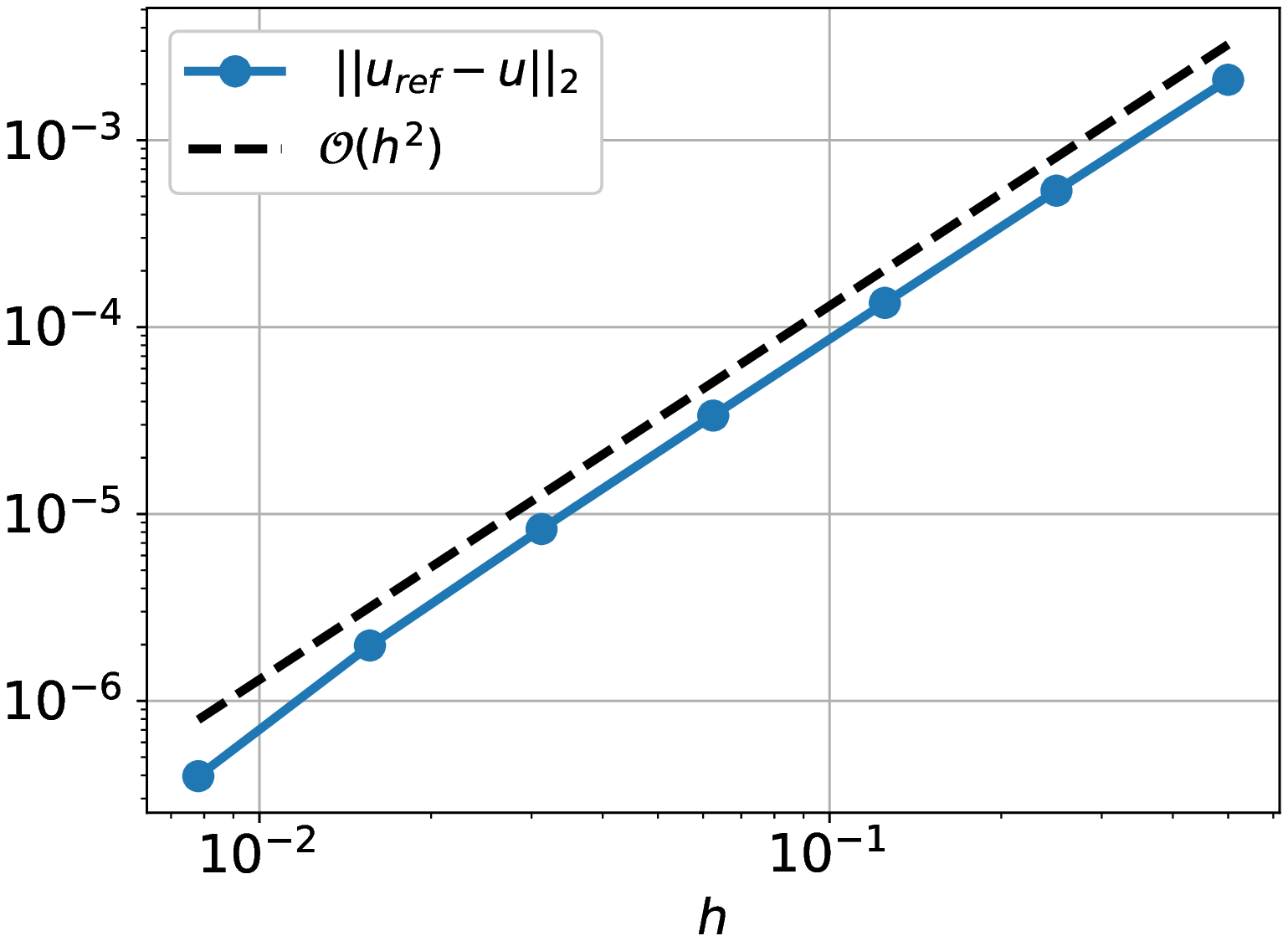}
    \end{subfigure}
    \caption{Time evolution of the relative entropy with respect to steady state \eqref{eq:continuous-entropy-steady-state} for non-constant boundary values (left) and of the quadratic $L^2$ errors between steady state and solutions at time $t$ (middle). On the right, the $L^2$ error with respect to the grid spacing $h$ is shown.}
    \label{fig:test-case-1B-entropy}
\end{figure}
\subsection{PSC simulation setup}\label{sec:pscsetup}
In the final simulation setup, we choose the rescaling factors and non-dimensionalized parameters in such a way that the resulting solutions correspond to a realistic PSC device. All parameters are chosen in agreement with \Cref{sec:nondimens-model} and with non-zero band-edge energies. Apart from the additional non-scaled parameters $N_a = \num{1.0e21} \, \text{cm}^{-3}$ and $E_a = -4.45 \; \text{eV}$, we used the parameter set provided in \cite{Courtier2019a}. The mesh is given by $385$ nodes with a uniform grid spacing in each layer, namely $h_{\text{ETL}} \approx \num{7.8e-8}$cm, $h_{\text{HTL}} \approx \num{1.6e-7}$cm in the transport layers and $h_{\text{intr}} \approx \num{3.1e-7}$cm in the perovskite layer. The uniform time mesh is built with a step size of $ \Delta t = 0.5\, \text{s}$ and the final time is given by $t_F=220 \,\text{s}$.
\begin{figure}[ht]
    \begin{subfigure}{0.49\textwidth}
        \includegraphics[scale=0.7]{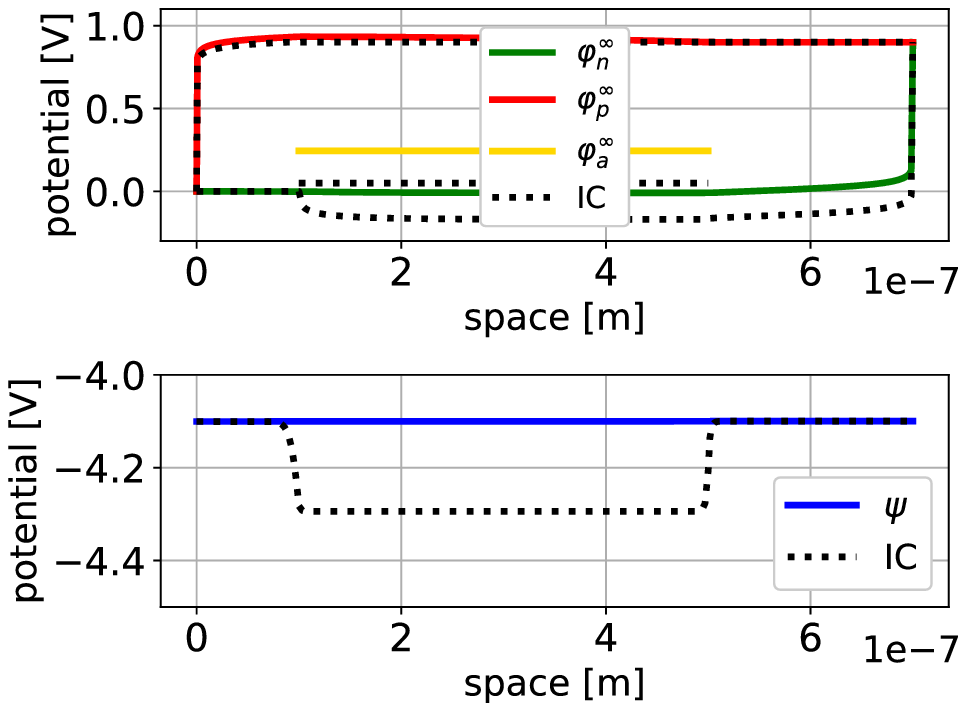}
    \end{subfigure}
    \begin{subfigure}{0.49\textwidth}
        \includegraphics[scale=0.63]{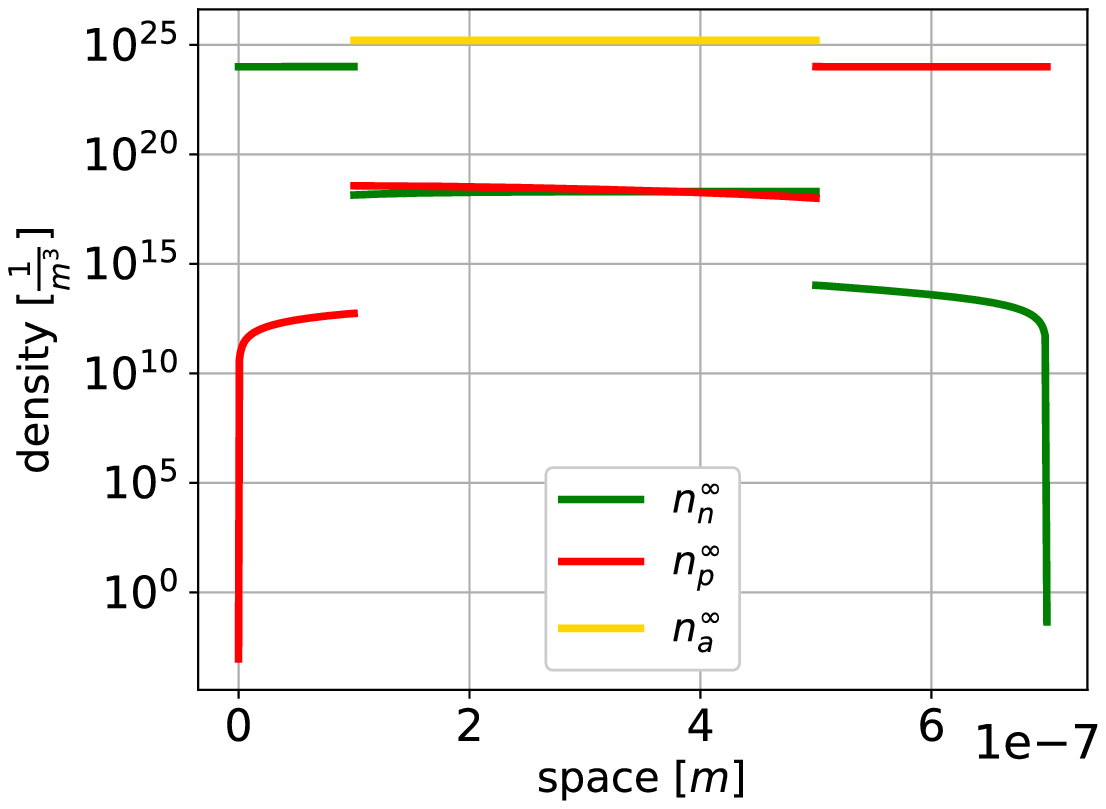}
    \end{subfigure}
    \caption{
        Steady state potentials with the respective initial conditions (left) and the associated densities of charge carriers (right) for a PSC three-layer device at an applied voltage $0.9 \, \text{V}$.}
    \label{fig:test-case-2-sol}
\end{figure}
Usually a PSC device is held for several seconds at a constant voltage, ensuring that ionic charges equilibrate. This procedure is often called \textit{preconditioning protocol} \cite{Courtier2019}. Afterwards, scan protocols with a time-dependent applied voltage --  incorporated via time-dependent Dirichlet boundary conditions -- are performed to study the device physics.
Thus, the steady potentials and their respective densities depicted in \Cref{fig:test-case-2-sol} can be regarded as the solutions after a successful preconditioning scan. Within the presented configuration the applied voltage is chosen such that the steady state electric potential $\psi^\infty$ is constant which can be observed well in \Cref{fig:test-case-2-sol}.
The depicted initial conditions correspond to a solution of the charge transport model with a non-constant vacancy concentration. As before, we consider the large time behavior of the quadratic $L^2$ errors and the relative entropy with respect to the steady state. Taking the thermodynamic free energy \eqref{eq:physical-free-energy} into account, we can reformulate the dimensional relative entropy with respect to the steady state \eqref{eq:continuous-entropy-steady-state}
\begin{equation}\label{eq:free-energy-steady-state}
    \mathbb{E}_\infty(t) = \frac12 \int_{\mathbf{\Omega}}  \varepsilon_s  |\nabla (\psi - \psi^\infty)|^2\,d\mathbf{x}
    + \int_{\mathbf{\Omega}_{\text{intr}}} H_a(n_a, n^\infty_a)\,d\mathbf{x}
    + \sum_{\alpha \in \{n, p\}}
    \int_{\mathbf{\Omega}} H_\alpha(n_\alpha, n_\alpha^\infty)  \,d\mathbf{x},
\end{equation}
where $H_\alpha(x,y) = \Phi_\alpha(x) - \Phi_\alpha(y) - \Phi_\alpha'(y)(x-y)$, as defined in \eqref{eq:H-function}, but with
\begin{align} \label{eq:free-entropy-Phi}
    \Phi_n(x) =  k_B T x &\left(  \log\left(\frac{x}{N_n} \right)  - 1 \right) - z_n E_n  x, \quad \Phi_p(x) =  k_B T x \left(  \log\left(\frac{x}{N_p} \right)  - 1 \right) - z_p E_p x, \quad  \\
     ~
    \Phi_a(x) &=   k_B T  \left(  x \log \left( \frac{x}{N_a}\right) + \left( N_a -x \right) \log \left( 1- \frac{x}{N_a} \right) \right)- z_aE_a x,
\end{align}
i.e. we extend the contributions of the relative entropy with respect to the steady state such that they are consistent with the thermodynamic free energy \eqref{eq:physical-free-energy}. As before, \Cref{fig:test-case-2-entropy-with-gen} indicates an exponential decay towards zero of the relative entropy \eqref{eq:free-energy-steady-state} as well as of the quadratic $L^2$ errors with respect to time.
In contrast to the observations made in previous section, the relative entropy with respect to the steady state \eqref{eq:free-energy-steady-state} vanishes faster than the quadratic $L^2$ errors. This may be explained by the additional terms in \eqref{eq:free-entropy-Phi} due to non-zero band-edge energies which influence the convergence behavior. Still, we see a similar convergence rate of the two introduced measures for the deviation of a solution at time $t$ from the steady state.
\begin{figure}[ht]
    \begin{subfigure}[b]{0.49\textwidth}
        \includegraphics[scale=0.42]{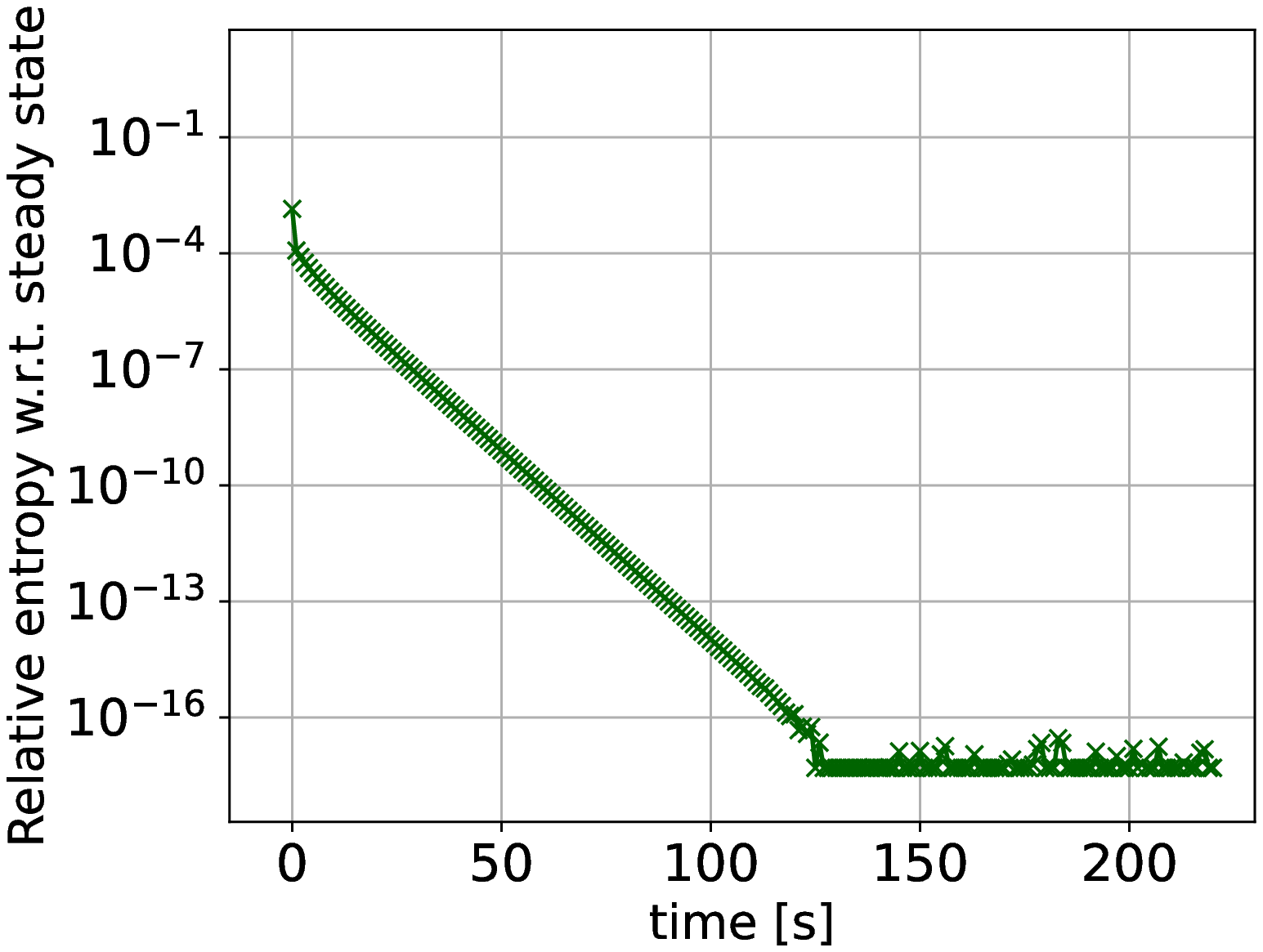}
    \end{subfigure}
    \begin{subfigure}[b]{0.49\textwidth}
        \includegraphics[scale=0.42]{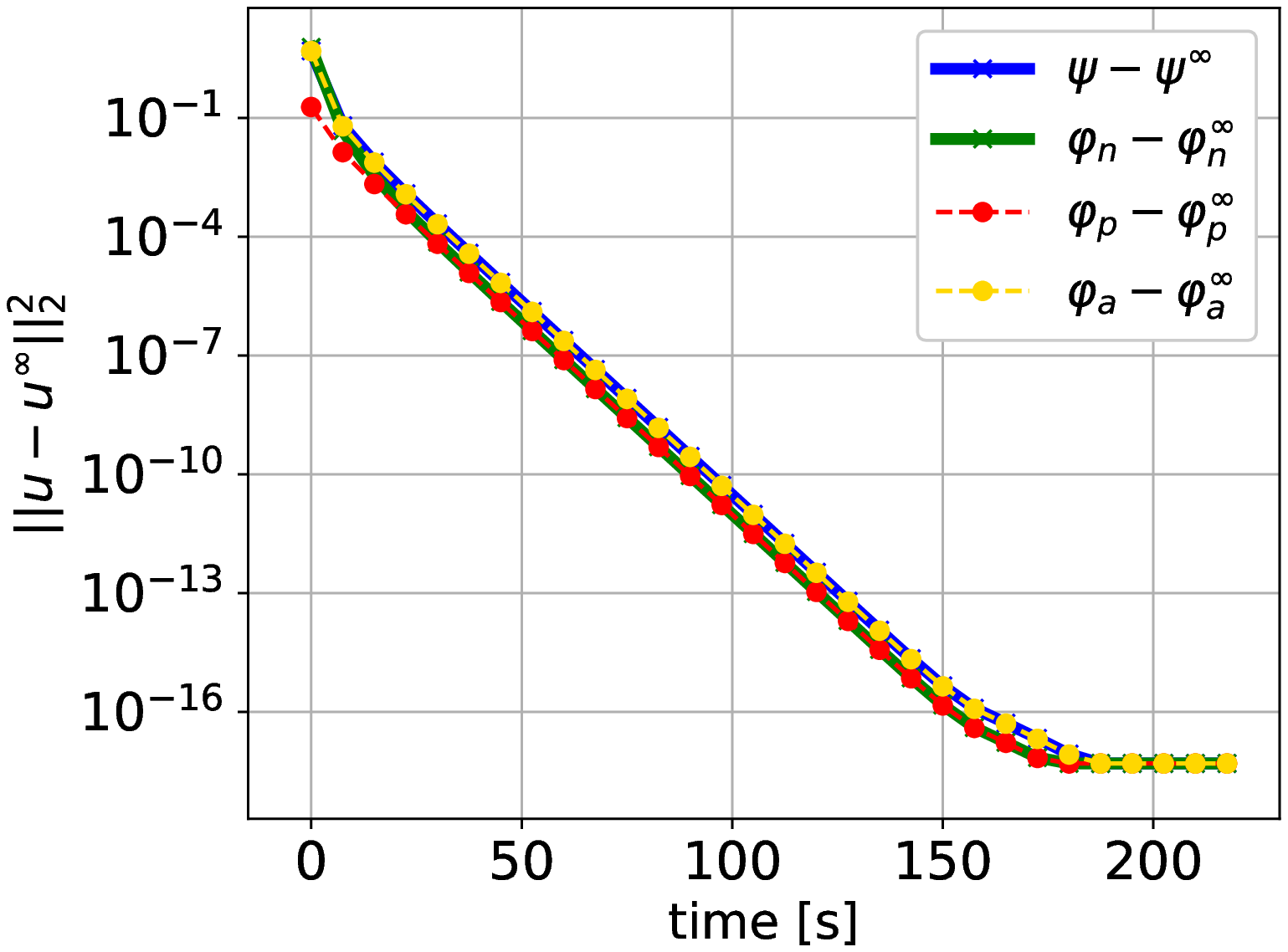}
    \end{subfigure}
    \caption{Time evolution of the relative entropy with respect to the steady state \eqref{eq:free-energy-steady-state} (left) and of the quadratic $L^2$ errors between the computed and the steady state solutions (right).}
    \label{fig:test-case-2-entropy-with-gen}
\end{figure}
\section{Conclusion and outlook}
\label{sec:conclusion}
For a charge transport model for perovskite solar cells, we discussed and proved a continuous entropy-dissipation inequality. We allowed general statistics function for the electric charge carriers. Moreover, we proved an analogous entropy-dissipation inequality for a finite volume scheme based on the excess chemical potential flux. The entropy-dissipation inequality helped us to prove the existence of a discrete solution at every time step. Furthermore, for a model in thermodynamic equilibrium we proved the decay of the continuous and discrete relative entropy with respect to the boundary conditions and numerically verified this result. A spatial convergence of order $2$ was also numerically shown.
In the last experiment, we studied the numerical convergence towards the steady state for a setup which can be physically interpreted as preconditioning a PSC device before applying a measurement protocol. Especially the relative entropy with respect to the steady state for non-zero band-edge energies decays exponentially towards zero. Studying the model behavior with non-zero and even irregular band-edge energies is from a mathematical and physical point of view of interest in the future. Also time-dependent Dirichlet functions coinciding with physically realistic measurement techniques for perovskite solar cells can be a topic of future research. Finally, deriving reduced models by ignoring small dimensionless parameters could also be investigated.

\vspace{1em}
\textit{Acknowledgments} This work was partially supported by the Leibniz competition as well as the CEMPI (ANR-11-LABX-0007) and the scientific department of the French Embassy in Germany.
\appendix
\section{Estimates on statistics and entropy functions} \label{app:estimates-statistics}
In this section we deal with the estimates concerning statistics and entropy functions. First, we provide the proof of Lemma~\ref{lemma:bound-primitive-argument} and Lemma~\ref{lemma:relation-inverse-primitive} which are stated under abstract assumptions \eqref{hyp:statistics-n-p}, \eqref{hyp:statistics-a}  and/or \eqref{hyp:limit-H-finverse} on the statistics. Then, we show that the examples of Fermi-Dirac and Boltzmann statistics \eqref{eq:FermiDirac}, \eqref{eq:Boltzmann} and \eqref{eq:Fermi--1} satisfy these  assumptions.
\subsection{Proof of Lemma~\ref{lemma:bound-primitive-argument}}
Let us show point (i) of the Lemma. Let $\mathcal{F}_\alpha$  with $\alpha = n, p$ be a statistics function satisfying \eqref{hyp:statistics-n-p} and $H_\alpha$ be the associated relative entropy function. Let $\varepsilon > 0$ and $y_0 \geq 0$. For $x\geq0$ and $y\in[0,y_0]$ one has
    \begin{align*}
            x \leq \sup_{x \in \mathbb{R}} \left(x - \varepsilon H_\alpha(x,y)\right) + \varepsilon H_\alpha(x,y).
    \end{align*}
    The first term on the right-hand side is the Legendre transform of $x \mapsto \varepsilon H_\alpha(x,y)$ evaluated at $1$. It is exactly given by $\bar{x} - \varepsilon H_\alpha(\bar{x}, y)$ with $\bar{x} = \mathcal{F}_\alpha \left( \frac{1}{\varepsilon} + \mathcal{F}_\alpha^{-1}(y)\right)$. In turn, one has
    \begin{align*}
        x \leq c_{y_0, \varepsilon} + \varepsilon H_\alpha(x,y), \quad  \text{for all}\; x\geq0,\  y\in[0,y_0],
    \end{align*}
    for $c_{y_0, \varepsilon} =  \mathcal{F}_\alpha \left( \frac{1}{\varepsilon} + \mathcal{F}_\alpha^{-1}(y_0)\right)$, since $\mathcal{F}_\alpha$ is increasing.

    For point (ii), where $\mathcal{F}_a$ is a statistics function satisfying \eqref{hyp:statistics-a} and $\Phi_a$ is the associated entropy function, an analogous calculation will prove the estimate with $c_{\varepsilon} = \mathcal{F}_a(\frac{1}{\varepsilon})$
    \begin{align*}
        x \leq c_\varepsilon + \varepsilon \Phi_a(x), \quad \text{for all}\; x\geq 0.
    \end{align*}
\subsection{Proof of Lemma~\ref{lemma:relation-inverse-primitive}}
Now, let us consider $\mathcal{F}_\alpha$ with $\alpha = n, p$ as a statistics function satisfying \eqref{hyp:statistics-n-p} and \eqref{hyp:limit-H-finverse}. Let $\varepsilon > 0$ and $y_0 \geq 0$. On the one hand, because of \eqref{hyp:limit-H-finverse} there exists $x_0 = x_0(y_0, \varepsilon)\geq 0$ such that
\begin{align*}
        0\leq \mathcal{F}_\alpha^{-1}(x)
        \leq \varepsilon H_\alpha(x,y_0), \quad \text{for all}\; x\geq x_0.
\end{align*}
Further, the calculation $\partial_y H_\alpha(x, y) = - \Phi_\alpha''(y) (x-y)$ reveals that $y \mapsto H_\alpha(x,y)$ is non-increasing for all $y \in [0, y_0], y \leq x$, due to the convexity of $\Phi_\alpha$. Hence,
\begin{align*}
        0\leq \mathcal{F}_\alpha^{-1}(x)
        \leq \varepsilon H_\alpha(x,y)\leq \varepsilon H_\alpha(x,y_0), \quad \text{for all}\; y \in [0, y_0], \; x\geq \max \{x_0, y_0\} =: \bar{x}.
\end{align*}
On the other hand, for $0\leq x\leq \bar{x}$
\begin{align*}
        \mathcal{F}_\alpha^{-1}(x)
        &\leq  \mathcal{F}_\alpha^{-1}(\bar{x}).
\end{align*}
Hence, in total the claim is proven with  $c_{y_0, \varepsilon} =  \mathcal{F}_\alpha^{-1} (\bar{x}) $
\begin{align*}
        \max(\mathcal{F}_\alpha^{-1}(x),0)
        \leq c_{y_0, \varepsilon} + \varepsilon H_\alpha(x,y), \quad \text{for all}\; x \geq 0, \quad y \in [0, y_0].
\end{align*}
\subsection{Boltzmann and Fermi-Dirac entropy functions}
We now relate the  statistics \eqref{eq:FermiDirac}, \eqref{eq:Boltzmann} to the hypotheses \eqref{hyp:statistics-n-p}  and \eqref{hyp:limit-H-finverse}, and the statistics \eqref{eq:Fermi--1} to the hypothesis \eqref{hyp:statistics-a}. We only give a proof in the case of the Fermi-Dirac statistics of order $1/2$ since the results are essentially trivial for the other statistics.
\begin{lemma}[Boltzmann]
Assume that the statistics for electrons and holes is the Boltzmann statistics \eqref{eq:Boltzmann}, namely
\[
\mathcal{F}_\alpha(\eta) = F_B(\eta) = e^\eta\,,\quad\eta\in\mathbb{R}\,,\ \alpha \in\{n,p\}.
\]
Then, $\mathcal{F}_\alpha$ satisfies \eqref{hyp:statistics-n-p} and \eqref{hyp:limit-H-finverse}.
\end{lemma}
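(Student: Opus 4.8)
The plan is to verify the two hypotheses directly from the closed-form expressions, since for the Boltzmann statistics everything is explicit. For \eqref{hyp:statistics-n-p}, I would first note that $\eta\mapsto e^{\eta}$ is a smooth bijection of $\mathbb{R}$ onto $(0,\infty)$ with smooth inverse $x\mapsto\log x$, hence a $C^{1}$-diffeomorphism; and that $\mathcal{F}_\alpha'(\eta)=\mathcal{F}_\alpha(\eta)=e^{\eta}=\exp(\eta)$, so the required chain $0<\mathcal{F}_\alpha'(\eta)\le\mathcal{F}_\alpha(\eta)\le\exp(\eta)$ holds --- in fact with equalities throughout. This settles \eqref{hyp:statistics-n-p} in one line.

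For \eqref{hyp:limit-H-finverse}, the point is to expose the leading-order growth of $H_\alpha(\cdot,y_0)$ relative to $\mathcal{F}_\alpha^{-1}$. I would record that $\mathcal{F}_\alpha^{-1}(x)=\log x$ and, fixing the additive constant in \eqref{eq:entropyfunc} so that $\Phi_\alpha\ge0$ vanishes only at $x=1$, that $\Phi_\alpha(x)=x\log x-x+1$. Then for $y_0>0$ a direct substitution in \eqref{eq:H-function} yields $H_\alpha(x,y_0)=x\log(x/y_0)-x+y_0$. Dividing by $\mathcal{F}_\alpha^{-1}(x)=\log x$ and sending $x\to+\infty$, the term $x\log(x/y_0)/\log x$ is asymptotically equivalent to $x$ whereas $(x-y_0)/\log x=o(x)$; hence the quotient tends to $+\infty$, which is exactly \eqref{hyp:limit-H-finverse}.

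It remains only to treat the degenerate value $y_0=0$. There $\Phi_\alpha'(0^{+})=-\infty$, so with the convention $0\log 0=0$ one gets $H_\alpha(x,0)=+\infty$ for every $x>0$ and the limit in \eqref{hyp:limit-H-finverse} is trivially $+\infty$; alternatively one may observe that in every application of the hypothesis the reference value is of the form $\mathcal{F}_\alpha(\cdot)>0$, so only strictly positive $y_0$ matter. I do not anticipate any genuine obstacle here: the whole argument reduces to the elementary fact that $x\log x$ dominates $\log x$, and the only points deserving a word of care are the normalisation of the constant in $\Phi_\alpha$ and the handling of $y_0=0$.
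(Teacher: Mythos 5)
Your proof is correct and follows exactly the route the authors had in mind: the paper explicitly omits this proof as ``essentially trivial'' for the Boltzmann case, and your direct verification (equalities throughout in \eqref{hyp:statistics-n-p}, and the explicit formula $H_\alpha(x,y_0)=x\log(x/y_0)-x+y_0$ showing that $x\log x$ dominates $\log x$ for \eqref{hyp:limit-H-finverse}) supplies precisely the omitted details. Your remark on the degenerate value $y_0=0$ is a sensible extra precaution, and your observation that in every application the reference value is of the form $\mathcal{F}_\alpha(\cdot)>0$ is the correct way to dismiss it.
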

\begin{lemma}[Fermi-Dirac of order $-1$]
Assume that the statistics for anion vacancies is the Fermi-Dirac integral of order $-1$ in \eqref{eq:Fermi--1}, namely
\[
\mathcal{F}_a(\eta) = F_{-1}(\eta) =  \frac{1}{\exp(-\eta)+1} \,,\quad\eta\in\mathbb{R}\,.
\]
Then, $\mathcal{F}_a$ satisfies \eqref{hyp:statistics-a}.
\end{lemma}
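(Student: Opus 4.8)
The plan is to verify the three requirements of \eqref{hyp:statistics-a} directly from the explicit formula. First I would rewrite the Fermi-Dirac integral of order $-1$ in its logistic form
\[
\mathcal{F}_a(\eta) = F_{-1}(\eta) = \frac{1}{\exp(-\eta)+1} = \frac{\exp(\eta)}{1+\exp(\eta)}\,,\quad\eta\in\mathbb{R}\,,
\]
which makes the qualitative properties transparent: the map is $C^\infty$, takes values in $(0,1)$, and satisfies $\mathcal{F}_a(\eta)\to 0$ as $\eta\to-\infty$ and $\mathcal{F}_a(\eta)\to 1$ as $\eta\to+\infty$.

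Next I would compute the derivative
\[
\mathcal{F}_a'(\eta) = \frac{\exp(-\eta)}{(\exp(-\eta)+1)^2} = \mathcal{F}_a(\eta)\bigl(1-\mathcal{F}_a(\eta)\bigr)\,.
\]
Since $\mathcal{F}_a(\eta)\in(0,1)$ for every $\eta$, this quantity is strictly positive, so $\mathcal{F}_a$ is strictly increasing; combined with the limits above and the smoothness, this shows that $\mathcal{F}_a:\mathbb{R}\to(0,1)$ is a $C^1$-diffeomorphism (in fact a $C^\infty$-diffeomorphism, with inverse $x\mapsto\log\frac{x}{1-x}$). The same identity yields the lower half of the required chain of inequalities: because $1-\mathcal{F}_a(\eta)\in(0,1)$, one has $0<\mathcal{F}_a'(\eta)=\mathcal{F}_a(\eta)(1-\mathcal{F}_a(\eta))\le\mathcal{F}_a(\eta)$.

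Finally, for the upper bound I would observe that $1+\exp(\eta)\ge 1$ forces $\mathcal{F}_a(\eta)=\exp(\eta)/(1+\exp(\eta))\le\exp(\eta)$, which is the remaining inequality in \eqref{hyp:statistics-a}, completing the verification. There is no genuine obstacle here — every step is an elementary manipulation of the explicit sigmoid — so the only point requiring a modicum of care is invoking the non-vanishing of $\mathcal{F}_a'$ in order to legitimately upgrade bijectivity to the diffeomorphism property.
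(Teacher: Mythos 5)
Your verification is correct and complete; the paper itself omits the proof of this lemma, declaring it ``essentially trivial,'' and your direct computation (the logistic identity $\mathcal{F}_a'=\mathcal{F}_a(1-\mathcal{F}_a)$ giving both positivity and the bound $\mathcal{F}_a'\le\mathcal{F}_a$, plus $\mathcal{F}_a\le e^\eta$ and the explicit inverse $x\mapsto\log\frac{x}{1-x}$) is exactly the intended routine check.
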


\begin{lemma}[Fermi-Dirac of order $1/2$]
Assume that the statistics for electrons and holes is the Fermi-Dirac integral of order $1/2$ in \eqref{eq:FermiDirac}, namely
\[
\mathcal{F}_\alpha(\eta) = F_{1/2}(\eta) =  \frac{2}{ \sqrt{\pi} } \int_{0}^{\infty} \frac{\xi^{1/2}}{\exp(\xi - \eta) + 1}\: \mathrm{d}\xi\,,\quad\eta\in\mathbb{R}\,,\ \alpha \in\{n,p\}.
\]
Then, $\mathcal{F}_\alpha$ satisfies \eqref{hyp:statistics-n-p} and \eqref{hyp:limit-H-finverse}.
\end{lemma}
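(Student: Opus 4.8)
The plan is to verify the two hypotheses \eqref{hyp:statistics-n-p} and \eqref{hyp:limit-H-finverse} directly from the integral representation \eqref{eq:FermiDirac}, using only elementary bounds on the integrand $\xi\mapsto(e^{\xi-\eta}+1)^{-1}$. The computations are routine; the only point requiring a little care is \eqref{hyp:limit-H-finverse}, where one must match the superlinear-in-$x$ growth of $H_\alpha(x,y_0)$ against the sublinear growth of $\mathcal{F}_\alpha^{-1}(x)$, and the change of variables $s=F_{1/2}(\zeta)$ makes this transparent.

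First I would handle \eqref{hyp:statistics-n-p}. Differentiation under the integral sign (justified by a locally uniform integrable domination in $\xi$ of the integrand and its $\eta$-derivative) shows $F_{1/2}\in C^\infty(\mathbb{R})$ with
\[
F_{1/2}'(\eta)=\frac{2}{\sqrt\pi}\int_0^\infty \xi^{1/2}\,\frac{e^{\xi-\eta}}{(e^{\xi-\eta}+1)^2}\,d\xi>0,
\]
so $F_{1/2}$ is strictly increasing. Using $0<(e^{\xi-\eta}+1)^{-1}\le e^{\eta-\xi}$ together with $\int_0^\infty\xi^{1/2}e^{-\xi}\,d\xi=\Gamma(3/2)=\sqrt\pi/2$, dominated convergence gives $F_{1/2}(\eta)\to 0$ as $\eta\to-\infty$ and monotone convergence gives $F_{1/2}(\eta)\to+\infty$ as $\eta\to+\infty$; hence $F_{1/2}$ is a continuous increasing bijection $\mathbb{R}\to(0,\infty)$, and the inverse function theorem makes it a $C^1$-diffeomorphism. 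The same bound $(e^{\xi-\eta}+1)^{-1}\le e^{\eta-\xi}$ yields $F_{1/2}(\eta)\le e^\eta$, while the pointwise inequality $\frac{e^{\xi-\eta}}{(e^{\xi-\eta}+1)^2}=\frac{1}{e^{\xi-\eta}+1}\cdot\frac{e^{\xi-\eta}}{e^{\xi-\eta}+1}\le\frac{1}{e^{\xi-\eta}+1}$ yields $F_{1/2}'(\eta)\le F_{1/2}(\eta)$. This gives all of \eqref{hyp:statistics-n-p}.

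For \eqref{hyp:limit-H-finverse}, set $g=\mathcal{F}_\alpha^{-1}=F_{1/2}^{-1}$. Since $\Phi_\alpha'=\mathcal{F}_\alpha^{-1}$, the definition \eqref{eq:H-function} gives $H_\alpha(x,y_0)=\int_{y_0}^x\bigl(g(s)-g(y_0)\bigr)\,ds$. Substituting $s=F_{1/2}(\zeta)$, $ds=F_{1/2}'(\zeta)\,d\zeta$ (so $g(s)=\zeta$) turns this into $H_\alpha(x,y_0)=\int_{\zeta_0}^{\eta}(\zeta-\zeta_0)\,F_{1/2}'(\zeta)\,d\zeta$ with $\zeta_0=g(y_0)$ and $\eta=g(x)$. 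The derivative $F_{1/2}'=F_{-1/2}=\tfrac{1}{\sqrt\pi}\int_0^\infty t^{-1/2}(e^{t-\zeta}+1)^{-1}\,dt$ (the identity obtained by an integration by parts) is positive and increasing in $\zeta$, hence bounded below on $[\zeta_0+1,\infty)$ by the constant $c_0=F_{1/2}'(\zeta_0+1)>0$; therefore $H_\alpha(x,y_0)\ge \tfrac{c_0}{2}\bigl((\eta-\zeta_0)^2-1\bigr)$ for $\eta\ge\zeta_0+1$. Dividing by $\mathcal{F}_\alpha^{-1}(x)=\eta$ and letting $x\to+\infty$ (equivalently $\eta\to+\infty$, since $g$ maps $(0,\infty)$ onto $\mathbb{R}$) gives $H_\alpha(x,y_0)/\mathcal{F}_\alpha^{-1}(x)\to+\infty$, which is exactly \eqref{hyp:limit-H-finverse}.

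As noted, the main obstacle is \eqref{hyp:limit-H-finverse}: one needs superlinear growth of $H_\alpha(x,\cdot)$, and the above change of variables delivers it cleanly. An alternative route would invoke the classical large-$\eta$ asymptotics $F_{1/2}(\eta)\sim\frac{4}{3\sqrt\pi}\eta^{3/2}$, giving $\mathcal{F}_\alpha^{-1}(x)\sim c\,x^{2/3}$, $\Phi_\alpha(x)\sim c'x^{5/3}$ and hence $H_\alpha(x,y_0)/\mathcal{F}_\alpha^{-1}(x)\sim c''x$, but this requires the Sommerfeld expansion and is heavier than the elementary argument above.
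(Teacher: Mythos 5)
Your proof is correct, and while the verification of \eqref{hyp:statistics-n-p} coincides with the paper's (same domination $({e^{\xi-\eta}+1})^{-1}\leq e^{\eta-\xi}$ for $F_{1/2}\leq e^\eta$, same pointwise bound for $F_{1/2}'\leq F_{1/2}$), your treatment of \eqref{hyp:limit-H-finverse} is genuinely different. The paper proves the two-sided power-law estimate $c_1\eta^{3/2}\leq F_{1/2}(\eta)\leq c_2\eta^{3/2}$ for $\eta\geq1$ by splitting the integral at $\xi=\eta$, then deduces $F_{1/2}^{-1}(s)=\mathcal{O}(s^{2/3})$, $\Phi_\alpha(s)\sim s^{5/3}$, and hence a ratio growing like $x$. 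You instead write $H_\alpha(x,y_0)=\int_{\zeta_0}^{\eta}(\zeta-\zeta_0)F_{1/2}'(\zeta)\,d\zeta$ via the substitution $s=F_{1/2}(\zeta)$ and need only a positive lower bound for $F_{1/2}'$ on a half-line, which you obtain from the integration-by-parts identity $F_{1/2}'=F_{-1/2}$ and the evident monotonicity of $F_{-1/2}$; this yields $H_\alpha(x,y_0)\gtrsim\bigl(\mathcal{F}_\alpha^{-1}(x)\bigr)^2$, quadratic in the quantity you divide by. Your route is more elementary (no asymptotics of $F_{1/2}$ at $+\infty$ are needed) and more general — it proves \eqref{hyp:limit-H-finverse} for any statistics satisfying \eqref{hyp:statistics-n-p} whose derivative is bounded below by a positive constant on half-lines $[\zeta_1,+\infty)$, so it covers Boltzmann and all Fermi--Dirac integrals of non-negative order uniformly — at the price of a weaker quantitative rate for the ratio ($\sim x^{2/3}$ rather than $\sim x$), which is irrelevant for the statement. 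The only cosmetic caveat, shared with the paper, is that $\zeta_0=\mathcal{F}_\alpha^{-1}(y_0)$ requires $y_0>0$; this is harmless since the $y_0$ used in the analysis is $y_\alpha^D=\mathcal{F}_\alpha(\Vert\varphi^D\Vert_\infty+\Vert\psi^D\Vert_\infty)>0$.
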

\begin{proof}
First, observe that $\mathcal{F}_\alpha$ is smooth and strictly increasing with limits $0$ and $+\infty$, when $\eta\to-\infty$ and $\eta\to+\infty$ respectively. Then,
\[
F_{1/2}(\eta)\exp(-\eta) = \frac{2}{ \sqrt{\pi} } \int_{0}^{\infty} \frac{\xi^{1/2}}{\exp(\xi) + \exp(\eta)}\: \mathrm{d}\xi \leq \frac{2}{ \sqrt{\pi} }\int_{0}^{\infty}\xi^{1/2}\exp(-\xi)\: \mathrm{d}\xi = 1.
\]
Moreover,
\[
F_{1/2}'(\eta) = \frac{2}{ \sqrt{\pi} } \int_{0}^{\infty} \frac{\xi^{1/2}\exp(\xi - \eta)}{(\exp(\xi - \eta) + 1)^2}\: \mathrm{d}\xi \leq  F_{1/2}(\eta).
\]
This proves \eqref{hyp:statistics-n-p}. Now let us focus on the behavior at infinity of $F_{1/2}$. We claim the existence of constants $c_1,c_2>0$, such that
\begin{align}
        c_1\eta^{3/2}\leq F_{1/2}(\eta) \leq c_2\eta^{3/2}, \quad \text{for}\; \eta \geq 1.
        \label{eq:F_j_inequality}
\end{align}
With \eqref{eq:F_j_inequality} we can conclude $F_{1/2}^{-1}(s) = \mathcal{O}(s^{2/3})$ for $s\to \infty$. Therefore, the associated entropy function behaves like $\mathcal{O}(s^{5/3})$ and  \eqref{hyp:limit-H-finverse} readily follows. To see that \eqref{eq:F_j_inequality} is indeed satisfied, let us consider \eqref{eq:FermiDirac} on the two intervals $[0, \eta]$ and $[\eta, \infty)$, where the respective integrals are denoted by $I_1, I_2$ and substitute $z = \xi - \eta$. This yields $ F_{1/2}(\eta) = \frac{2}{\sqrt{\pi}}(I_1+I_2)$ with
$$
I_1=\int_{0}^{\eta} \frac{\xi^{1/2}}{\exp \left(\xi -\eta \right)+1}\,d\xi \quad \mbox{ and } \quad I_2=\int_0^\infty \frac{(z+\eta)^{1/2}}{\exp z+1}\,dz.
$$
We bound $I_1$ and $I_2$ separately. On the one hand, since $0\leq \exp \left(\xi - \eta \right) \leq 1$ for $\xi \leq \eta$, we obtain
\begin{align*}
        \frac{1}{3}\eta^{3/2}
        \leq I_1
        \leq \frac{2}{3}\eta^{3/2}.
\end{align*}
On the other hand, we split $I_2$ into an integral over $[0,\eta]$ and one over $[\eta,+\infty)$ and bound each term
$$
 I_2 =\int_0^{\eta} \frac{(z+\eta)^{1/2}}{\exp z+1}\,dz
                +
                \int_{\eta}^\infty \frac{(z+\eta)^{1/2}}{\exp z+1}\,dz
                \leq 2^{1/2}\eta^{1/2} \int_0^{\eta} \frac{1}{\exp z+1}\,dz
                +
                2^{1/2} \int_{\eta}^\infty \frac{z^{1/2}}{\exp z+1}\,dz.
$$
But,
$$
 \int_0^{\eta} \frac{1}{\exp z+1}\,dz\leq  \int_0^{\eta} \exp(-z) dz\leq 1 \quad \mbox{ and } \quad  \int_{\eta}^\infty \frac{z^{1/2}}{\exp z+1}\,dz \leq \int_{0}^\infty \frac{z^{1/2}}{\exp z}\,dz = \Gamma(3/2),
$$
        where $\Gamma$ is the Euler's Gamma function, satisfying $\Gamma(3/2)=\frac{\sqrt{\pi}}{2}$. Hence, assuming $\eta \geq 1$, we receive for $I_2$
        \begin{align*}
                0 \leq I_2 \leq \sqrt{2} \Bigl( \eta^{1/2} + \frac{\sqrt{\pi}}{2} \Bigr) \leq \sqrt{2} \eta^{3/2} \Bigl(1 + \frac{\sqrt{\pi}}{2}\Bigr).
        \end{align*}
        And the claim in \eqref{eq:F_j_inequality} is shown with the constants $c_1 = \frac{2}{ 3\sqrt{\pi}} $ and $c_2= \frac{2}{\sqrt{\pi}}\left(\frac{2}{3} + \sqrt{2} \Bigl(1+\frac{\sqrt{\pi}}{2}\Bigr)\right)$.
    \end{proof}


\section{A technical result}  \label{app:technical-proof}
In this section, we establish a technical result, stated in Lemma \ref{lem:limupsilon}, which is crucial for the proof of bounds satisfied by the quasi Fermi potentials of electrons and holes, see Lemma \ref{lem.bounds.phialpha}.
\begin{lemma}\label{lem:limupsilon}
Assume that the statistics function $\mathcal F_\alpha$ satisfies the hypothesis \eqref{hyp:statistics-n-p}. Let us define the functions ${\mathcal K}_\alpha:(x,a)\in \R^2\mapsto {\mathcal K}_\alpha(x,a)\in \R$ and
${\mathcal D}_\alpha : (x,y,a,b)\in\R^4 \mapsto {\mathcal D}_\alpha(x,y,a,b)\in \R$ by
$$
\begin{aligned}
&{\mathcal K}_\alpha(x,a)=\log ({\mathcal F}_\alpha(x-a))-x,\\
&{\mathcal D}_\alpha(x,y,a,b)=(x-y)\Bigl[B\Bigl({\mathcal K}_\alpha(x,a)-{\mathcal K}_\alpha(y,b)\Bigr){\mathcal F}_\alpha(x-a)-B\Bigl({\mathcal K}_\alpha(y,b)-{\mathcal K}_\alpha(x,a)\Bigr){\mathcal F}_\alpha(y-b)
\Bigr].
\end{aligned}
$$
Then, for all $\overline{\Phi},\overline{\Psi}\in\R$, the function $ \Upsilon_{\overline{\Phi},\overline{\Psi}}:\R\to \R$ defined by
$$
\Upsilon_{\overline{\Phi},\overline{\Psi}}(x)=\inf \left\{{\mathcal D}_\alpha(x,y,a,b); \ -\overline{\Phi}\leq y\leq \overline{\Phi}, -\overline{\Psi}\leq a,b\leq \overline{\Psi}\right\}
$$
verifies
$$
\begin{aligned}
\lim_{x\to -\infty} \Upsilon_{\overline{\Phi},\overline{\Psi}}(x)=+\infty \quad \text{and} \quad \lim_{x\to +\infty} \Upsilon_{\overline{\Phi},\overline{\Psi}}(x)=+\infty.
\end{aligned}
$$
\end{lemma}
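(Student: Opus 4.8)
The plan is to analyze the two limits $x\to\pm\infty$ separately, exploiting the structure of $\mathcal{D}_\alpha$ as a Scharfetter--Gummel-type expression. Throughout, $y\in[-\overline{\Phi},\overline{\Phi}]$ and $a,b\in[-\overline{\Psi},\overline{\Psi}]$ range over compact sets, so $\mathcal{F}_\alpha(y-b)$, $\mathcal{K}_\alpha(y,b)$ and their analogues stay bounded; only the dependence on $x$ is dangerous. First I would record the key monotonicity/positivity fact: writing $\mathcal{D}_\alpha(x,y,a,b) = (x-y)\bigl[B(-Q)\mathcal{F}_\alpha(x-a) - B(Q)\mathcal{F}_\alpha(y-b)\bigr]$ with $Q = \mathcal{K}_\alpha(y,b)-\mathcal{K}_\alpha(x,a)$, the bracket is exactly the Scharfetter--Gummel flux, which by the convex-combination argument of Lemma~\ref{lemma:bound-mean-value} equals $\overline{n}_\sigma\cdot\bigl(\mathcal{K}_\alpha(x,a)-\mathcal{K}_\alpha(y,b) - (\log\mathcal{F}_\alpha(x-a) - \log\mathcal{F}_\alpha(y-b))\bigr)$ up to the right reindexing; more simply, $\mathcal{D}_\alpha \ge 0$ always because it is the discrete dissipation of a single edge, and it is bounded below by $(x-y)\bigl(\log\mathcal{F}_\alpha(x-a) - \log\mathcal{F}_\alpha(y-b) + (\text{bounded}) \bigr)\cdot(\text{positive, bounded below})$ once $|x|$ is large.

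For the limit $x\to+\infty$: here $\mathcal{F}_\alpha(x-a)\to+\infty$ uniformly in $a\in[-\overline{\Psi},\overline{\Psi}]$ since $\mathcal{F}_\alpha$ is an increasing diffeomorphism onto $(0,\infty)$. I would show $B(-Q)\mathcal{F}_\alpha(x-a)$ dominates $B(Q)\mathcal{F}_\alpha(y-b)$: using $B(-Q) = Q + B(Q)$ and $B(Q)\ge 0$, we get $B(-Q)\mathcal{F}_\alpha(x-a) - B(Q)\mathcal{F}_\alpha(y-b) \ge Q\,\mathcal{F}_\alpha(x-a) + B(Q)\bigl(\mathcal{F}_\alpha(x-a)-\mathcal{F}_\alpha(y-b)\bigr)$, and for $x$ large the second summand is nonnegative. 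Since $Q = \mathcal{K}_\alpha(y,b) - \mathcal{K}_\alpha(x,a) = \bigl(\log\mathcal{F}_\alpha(y-b) - \log\mathcal{F}_\alpha(x-a)\bigr) - y + x$, and $\log\mathcal{F}_\alpha(x-a) \le x-a$ by hypothesis \eqref{hyp:statistics-n-p}, we have $Q \ge a - y \ge -\overline{\Psi}-\overline{\Phi}$, i.e.\ $Q$ is bounded below; combined with $(x-y)\ge x-\overline{\Phi}\to+\infty$ and $\mathcal{F}_\alpha(x-a)\to\infty$, the product $(x-y)\cdot Q\,\mathcal{F}_\alpha(x-a)$ blows up unless $Q$ is small, but even when $Q$ is near its lower bound the term $B(Q)(\mathcal{F}_\alpha(x-a)-\mathcal{F}_\alpha(y-b))$ times $(x-y)$ still blows up because $B(Q)>0$ is bounded below on the compact range of $Q$. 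Making this dichotomy precise (splitting on whether $Q\ge 0$ or $Q<0$) gives $\Upsilon_{\overline\Phi,\overline\Psi}(x)\to+\infty$.

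For the limit $x\to-\infty$: now $\mathcal{F}_\alpha(x-a)\to 0$ and $(x-y)\to-\infty$, so $\mathcal{D}_\alpha \approx (x-y)\cdot\bigl(-B(Q)\mathcal{F}_\alpha(y-b)\bigr)$, which is positive and large provided $B(Q)\mathcal{F}_\alpha(y-b)$ is bounded below. The danger is that $B(Q)\to 0$, which happens when $Q\to+\infty$; but $Q = \log\mathcal{F}_\alpha(y-b) - \log\mathcal{F}_\alpha(x-a) - y + x$, and as $x\to-\infty$ one has $\log\mathcal{F}_\alpha(x-a)\to-\infty$ while $\log\mathcal{F}_\alpha(x-a)\ge$ (something controlled): the inequality $\mathcal{F}_\alpha'(\eta)\le\mathcal{F}_\alpha(\eta)$ gives $(\log\mathcal{F}_\alpha)'\le 1$, hence $\log\mathcal{F}_\alpha(x-a) \ge \log\mathcal{F}_\alpha(-\overline\Psi-1) + (x-a+\overline\Psi+1)$ for $x-a\le -\overline\Psi-1$, so $-\log\mathcal{F}_\alpha(x-a) \le C - x$, giving $Q \le C' $ bounded above. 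Therefore $B(Q)$ is bounded below by a positive constant on the relevant range, $\mathcal{F}_\alpha(y-b)$ is bounded below by a positive constant on the compact set, and $(y-x)\to+\infty$, so $\mathcal{D}_\alpha\ge (y-x)\cdot c \to+\infty$ uniformly over $y,a,b$; the contribution $(x-y)B(-Q)\mathcal{F}_\alpha(x-a)$ is handled by noting $B(-Q)\mathcal{F}_\alpha(x-a) = (B(Q)+Q)\mathcal{F}_\alpha(x-a)$ and $\mathcal{F}_\alpha(x-a)\to 0$ faster than $Q\to-\infty$ (again by $\log\mathcal{F}_\alpha(x-a)\le x-a$, so $\mathcal{F}_\alpha(x-a)\le e^{x-a}$ decays exponentially while $Q$ grows only linearly). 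This yields $\Upsilon_{\overline\Phi,\overline\Psi}(x)\to+\infty$.

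The main obstacle is the uniformity in the infimum: one must ensure the lower bound on $\mathcal{D}_\alpha$ does not degenerate as $y,a,b$ move within their compact ranges, in particular that the Bernoulli factor $B(Q)$ (or $B(-Q)$) stays bounded away from $0$ where needed. This is exactly where the two-sided bound $\mathcal{F}_\alpha'\le\mathcal{F}_\alpha\le e^\eta$ from \eqref{hyp:statistics-n-p} enters: the upper bound $\mathcal{F}_\alpha(\eta)\le e^\eta$ controls $\log\mathcal{F}_\alpha$ from above (forcing $Q$ bounded below as $x\to+\infty$), and the Lipschitz bound $(\log\mathcal{F}_\alpha)'\le1$ controls $\log\mathcal{F}_\alpha$ from below (forcing $Q$ bounded above as $x\to-\infty$), in both cases pinning $Q$ into a range on which $B$ is bounded below by a positive constant. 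I would therefore structure the proof as: (1) reduce to estimating $Q$ and $\mathcal{F}_\alpha(x-a)$ uniformly over the compact parameter box; (2) handle $x\to+\infty$ via the $B(-Q)=Q+B(Q)$ identity and the dichotomy on the sign of $Q$; (3) handle $x\to-\infty$ via exponential decay of $\mathcal{F}_\alpha(x-a)$ against the linear growth of $|Q|$ and $|x-y|$.
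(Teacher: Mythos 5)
Your proposal is correct and follows essentially the same route as the paper's proof: both arguments pin the Bernoulli argument $Q=\mathcal{K}_\alpha(y,b)-\mathcal{K}_\alpha(x,a)$ to a half-line over the compact $(y,a,b)$-box (bounded below as $x\to+\infty$ via $\mathcal{F}_\alpha(\eta)\le e^\eta$, bounded above as $x\to-\infty$ via $\mathcal{F}_\alpha'\le\mathcal{F}_\alpha$, i.e.\ the monotonicity of $s\mapsto\log\mathcal{F}_\alpha(s)-s$) so that the relevant Bernoulli factor is uniformly bounded below by a positive constant, and then let the divergence of $\mathcal{F}_\alpha(x-a)$, respectively the linear factor $x-y$ against the exponential decay $\mathcal{F}_\alpha(x-a)\le e^{x-a}$, force the blow-up — this is exactly what the paper's two-sided bound \eqref{ineq:D} packages. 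One caveat: as $x\to+\infty$ the quantity $Q$ need \emph{not} stay in a compact set (for Fermi--Dirac of order $1/2$ it tends to $+\infty$), and when $Q$ sits near a negative lower bound $Q_{\min}$ the term $(x-y)\,Q\,\mathcal{F}_\alpha(x-a)$ diverges to $-\infty$, so your dichotomy must be closed with the identity $Q+B(Q)=B(-Q)\ge B(-Q_{\min})>0$ (which you state but do not deploy at that point) rather than with a lower bound on $B(Q)$ alone.
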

\begin{proof}
Let us first remark that the function  ${\mathcal K}_\alpha$ is non-increasing with respect to its both variables $x$ and $a$ and that the Bernoulli function $B$ is also non-increasing on $\R$. We assume that $\overline{\Phi}$ and $\overline{\Psi}$ are given.
The regularity of the functions ensure that there exist positive constants $\lambda, \underline{\mu}, \overline{\mu} $ 
 such that, for $y\in [-\overline{\Phi},\overline{\Phi}]$ and $a,b\in [-\overline{\Psi}, \overline{\Psi}]$, we have
\begin{align*}
-\lambda \leq {\mathcal K}_\alpha(y,b)\leq \lambda
~
\quad \text{and} \quad
~
\underline{\mu}\leq {\mathcal F}_\alpha(y-b)\leq \overline{\mu},
~
\quad \text{for all} \; -\overline{\Phi}\leq y\leq \overline{\Phi},\  -\overline{\Psi}\leq b\leq \overline{\Psi}.
\end{align*}
This implies the following inequalities, for $x\in\R$, $y\in [-\overline{\Phi},\overline{\Phi}]$ and $a,b\in [-\overline{\Psi}, \overline{\Psi}]$,
\begin{eqnarray*}
\hphantom{-} B\Bigl({\mathcal K}_\alpha(x,-\overline{\Psi})+\lambda\Bigr)\leq& \!\!\! \hphantom{-} B\Bigl({\mathcal K}_\alpha(x,a)-{\mathcal K}_\alpha(y,b)\Bigr) \!\!\!&\leq \hphantom{-} B\Bigl({\mathcal K}_\alpha(x,\overline{\Psi})-\lambda\Bigr),\\
~
-B\Bigl(-\lambda-{\mathcal K}_\alpha(x, -\overline{\Psi})\Bigr)\leq&  \!\!\! -B\Bigl({\mathcal K}_\alpha(y,b)-{\mathcal K}_\alpha(x,a)\Bigr) \!\!\!&\leq -B\Bigl(\lambda-{\mathcal K}_\alpha(x, \overline{\Psi})\Bigr),
\end{eqnarray*}
yielding 
\begin{multline}\label{ineq:D}
B\Bigl({\mathcal K}_\alpha(x,-\overline{\Psi})+\lambda\Bigr){\mathcal F}_\alpha(x- \overline{\Psi})-B\Bigl(-\lambda-{\mathcal K}_\alpha(x, -\overline{\Psi})\Bigr){\overline \mu}
~
\leq
~
\frac{{\mathcal D}_\alpha(x,y,a,b)}{x-y}
~
\leq \\
~
B\Bigl({\mathcal K}_\alpha(x,\overline{\Psi})-\lambda\Bigr){\mathcal F}_\alpha(x+ \overline{\Psi})
-B\Bigl(\lambda-{\mathcal K}_\alpha(x, \overline{\Psi})\Bigr){ \underline{\mu}}.
\end{multline} 
Let us first consider that $x\leq -\overline{\Phi}$. Then, we deduce from \eqref{ineq:D}, that
$$
\frac{{\mathcal D}_\alpha(x,y,a,b)}{x-y}
\leq
B\Bigl({\mathcal K}_\alpha(-\overline{\Phi},\overline{\Psi})-\lambda\Bigr){\mathcal F}_\alpha(x+ \overline{\Psi})
-B\Bigl(\lambda-{\mathcal K}_\alpha(-\overline{\Phi}, \overline{\Psi})\Bigr){ \underline{\mu}}
$$
But, due to \eqref{hyp:statistics-n-p}, $\displaystyle\lim_{x\to -\infty} {\mathcal F}_\alpha(x+ \overline{\Psi})=0$, which implies that, for $-x$ large enough, the right-hand-side of the last inequality is negative. Therefore, for such an $x$ with $x \leq y$ we have
$$
{\mathcal D}_\alpha(x,y,a,b) \geq (x-y)\Bigl[ B\Bigl({\mathcal K}_\alpha(-\overline{\Phi},\overline{\Psi})-\lambda\Bigr){\mathcal F}_\alpha(x+ \overline{\Psi})
-B\Bigl(\lambda-{\mathcal K}_\alpha(-\overline{\Phi}, \overline{\Psi})\Bigr){ \underline{\mu}}\Bigr]
$$
and, taking the infimum in $y\in [-\overline{\Phi},\overline{\Phi}]$, we obtain
$$
\Upsilon_{\overline{\Phi},\overline{\Psi}}(x)\geq (x+\overline{\Phi}) \Bigl[B\Bigl({\mathcal K}_\alpha(-\overline{\Phi},\overline{\Psi})-\lambda\Bigr){\mathcal F}_\alpha(x+ \overline{\Psi})
-B\Bigl(\lambda-{\mathcal K}_\alpha(-\overline{\Phi}, \overline{\Psi})\Bigr){ \underline{\mu}}\Bigr].
$$
As the first product in the right-hand-side tends to 0, while the second one tends to $+\infty$, we deduce that
$$
\lim_{x\to -\infty} \Upsilon_{\overline{\Phi},\overline{\Psi}}(x)=+\infty.
$$
We may now consider that $x\geq \overline{\Phi}$. From \eqref{ineq:D}, we deduce that
$$
\frac{{\mathcal D}_\alpha(x,y,a,b)}{x-y}
\geq
\Bigl(B\bigl({\mathcal K}_\alpha(\overline{\Phi},-\overline{\Psi})+\lambda\bigr){\mathcal F}_\alpha(x- \overline{\Psi})
-B\bigl(-\lambda-{\mathcal K}_\alpha(\overline{\Phi}, -\overline{\Psi})\bigr){ \overline \mu}\Bigr).
$$
For $x$ sufficiently large, the right-hand-side of the last inequality is positive and
$$
\Upsilon_{\overline{\Phi},\overline{\Psi}}(x)\geq (x-\overline{\Phi}) \Bigl(B\bigl({\mathcal K}_\alpha(\overline{\Phi},-\overline{\Psi})+\lambda\bigr){\mathcal F}_\alpha(x- \overline{\Psi})
-B\bigl(-\lambda-{\mathcal K}_\alpha(\overline{\Phi}, -\overline{\Psi})\bigr){ \overline \mu}\Bigr).
$$
Therefore, we get
$$
\lim_{x\to +\infty} \Upsilon_{\overline{\Phi},\overline{\Psi}}(x)=+\infty.
$$
\end{proof}


\bibliographystyle{abbrv}
\bibliography{literature}

\end{document}